\documentclass[a4paper,12pt,reqno]{NumPDEsArticle}
\usepackage[T1]{fontenc}
\usepackage[utf8]{inputenc}
\usepackage[english]{babel}
\usepackage{amsmath, amsthm, amssymb}
\usepackage{siunitx}
\usepackage{fullpage}
\usepackage{xcolor}
\usepackage{etoolbox}
\usepackage{empheq}
\usepackage{enumitem}
\usepackage{caption}
\usepackage{subcaption}
\usepackage{chemformula}
\usepackage{pgfplotstable}
\usepackage{xparse}
\usepackage{mathtools}
\usepackage{placeins}
\usepackage{bm}
\pgfplotsset{
compat = newest,
tick label style = {font = \tiny},
legend style = {font = \tiny},
xlabel style={yshift=+0.5ex},
ylabel style={yshift=-1.0ex}
}
\usepackage[width=0.95\textwidth]{caption}
\usepackage[width=0.95\textwidth,labelfont=rm]{subcaption}
\usepackage{doi, hyperref}
\makeatletter
\patchcmd{\@xfloat}{\def\@fps{#2}}{\def\@fps{#2p}}{}%
  {\PackageError{numpdes-layout}{Could not enable float pages}{}}
\makeatother
\numberwithin{theorem}{section}

\BeforeBeginEnvironment{abstract}{\maketitle}

\newcommand\aaa{\boldsymbol{a}}
\newcommand\bb{\boldsymbol{b}}
\newcommand\ee{\boldsymbol{e}}

\newcommand\mm{\boldsymbol{m}}
\newcommand\nn{\boldsymbol{n}}
\newcommand\uu{\boldsymbol{u}}

\newcommand\vv{\boldsymbol{v}}

\newcommand\xx{\boldsymbol{x}}

\newcommand\CC{\boldsymbol{C}}

\newcommand\ff{\boldsymbol{f}}
\newcommand\E{\mathcal{E}}

\newcommand\MM{\bm{M}}

\newcommand\VV{\bm{V}}
\newcommand\II{\mathcal{I}}
\newcommand\NN{\mathcal{N}}

\renewcommand\S{\mathcal{S}}
\newcommand\T{\mathcal{T}}
\newcommand{\heff}{\boldsymbol{h}_{\rm eff}}
\newcommand{\zz}{\boldsymbol{z}}

\newcommand{\pphi}{\boldsymbol{\phi}}
\newcommand{\vvarphi}{\boldsymbol{\varphi}}

\newcommand{\bT}{\boldsymbol{T}}

\newcommand{\bH}{\mathbf{H}}
\newcommand{\bL}{\mathbf{L}}

\newcommand{\bW}{\mathbf{W}}

\newcommand{\OT}{\Omega_T}

\newcommand{\mhat}{\widehat{\mm}}
\newcommand\ppsi{\boldsymbol{\psi}}
\newcommand\zzeta{\boldsymbol{\zeta}}

\newcommand\0{\boldsymbol{0}}

\newcommand\Grad{\boldsymbol{\nabla}}
\newcommand\Lapl{\boldsymbol{\Delta}}

\newcommand\R{\mathbb{R}}
\newcommand\dt{\mathrm{d}_t}
\newcommand\dtt{\,\mathrm{d}t}

\newcommand\N{\mathbb{N}}

\newcommand{\abs}[1]{\left\lvert #1 \right\rvert}
\newcommand{\dual}[3][]{\langle #2,#3 \rangle_{#1}}
\NewDocumentCommand{\inner}{O{} O{} m m}{%
  #2\langle #3, #4 #2\rangle%
  \IfValueT{#1}{_{#1}}%
}
\newcommand{\norm}[2][]{\left\lVert #2 \right\rVert_{#1}}
\DeclareMathOperator{\diam}{diam}

\newcommand\mmt{\partial_t \mm}
\newcommand\mmtt{\partial_{tt} \mm}

\def\addlegendimage{\csname pgfplots@addlegendimage\endcsname}

\begin{document}
\title{An unconditionally convergent BDF2-type \\finite element integrator for the \\inertial Landau--Lifshitz--Gilbert equation}
\author{Michele~Ald\'e}
\address{TU Wien, Institute of Analysis and Scientific Computing
(E 101), Wiedner Hauptstraße 8--10,
1040 Vienna, Austria}
\email{michele.alde@asc.tuwien.ac.at}
\author{Hywel~Normington}
\address{Department of Mathematics,
University of Manchester,
Alan Turing Building, Oxford Road, Manchester M13~9PL, United Kingdom}
\email{hywel.normington@manchester.ac.uk}
\author{Dirk~Praetorius}
\address{TU Wien, Institute of Analysis and Scientific Computing
(E 101), Wiedner Hauptstraße 8--10,
1040 Vienna, Austria}
\email{dirk.praetorius@asc.tuwien.ac.at}
\author{Michele~Ruggeri}
\address{Department of Mathematics,
University of Bologna,
Piazza di Porta San Donato 5, 40126 Bologna, Italy}
\email{m.ruggeri@unibo.it}



\begin{abstract}
We propose and analyze a fully discrete numerical scheme to approximate solutions
to the inertial Landau--Lifshitz--Gilbert (iLLG) equation,
a state-of-the-art model to describe the dynamics of the magnetization
at subpicosecond time scales.
The method employs first-order finite elements for the spatial discretization
and the two-step backward differentiation formula (BDF2) for the time discretization, where the inertial term is treated by a convex combination of BDF2 and the backward Euler method.
We prove well-posedness and unconditional stability of the scheme,
and obtain explicit bounds for the error in the realization of the iLLG-inherent unit length constraint
satisfied by the magnetization.
Moreover,
we derive the discrete energy law satisfied by the approximations,
which resembles the inherent dissipative energy law of the model.
Finally, with a constructive approach, we prove unconditional convergence
of the approximations toward a weak solution of the problem.
Numerical experiments assess the performance of the scheme
and validate the theoretical findings.
\end{abstract}
\maketitle


\section{Introduction}
In the past few years, the study of ultrafast magnetization dynamics
has attracted a lot of attention, primarily due to its
potential use in future spintronic devices~\cite{dieny2020opportunities,walowski2016perspective},
along with recent confirmation of
nutational spin dynamics in
ferromagnetic materials~\cite{inertia2020,unikandanunni2022inertial,de2025magnetic},
theoretically predicted
some years ago~\cite{ciornei2011magnetization,wegrowe2012magnetization}.
The field of ultrafast magnetization dynamics was motivated by
a discovery in ferromagnetic nickel~\cite{beaurepaire1996ultrafast}, and 
the body of research has grown substantially
over the last two decades, e.g.,~\cite{koopmans2000ultrafast,
stanciu2007all,
lambert2014all,
mondal2021nutation,
dhali2024theory}.

The model commonly used to describe these ultrafast dynamics is a modification of the classical
Landau--Lifshitz--Gilbert (LLG) equation~\cite{landau1935lifshitz,Gilbert2004}: the so-called inertial Landau--Lifshitz--Gilbert (iLLG)
equation~\cite{ciornei2011magnetization,wegrowe2012magnetization}.
The order parameter is the magnetization, which is subject to a unit length constraint,
making the problem nonconvex and highly nonlinear.
The iLLG equation includes an additional term related
to the second time derivative of the magnetization,
and introduces nutation dynamics, i.e., rapid oscillations of the magnetization superimposed
on the slower damped precession~\cite{ciornei2011magnetization,mondal2023inertial,de2025magnetic}.

\subsection{Literature review}
From the analysis point of view, existence
of weak solutions has been proven in~\cite{hadda2014magnetization},
and more recently constructively in~\cite{ruggeri2022numerical} using two finite element schemes.
In terms of numerical methods, the number of reliable schemes is still limited in the literature.
We mention the finite difference schemes of~\cite{moumni2016finite,li2021second},
but of these, only~\cite{moumni2016finite} provides some numerical analysis in terms of
the conservation properties of the method. More recently,~\cite{chen2024convergence} proposes an implicit
finite difference scheme for the iLLG equation and establishes
second-order error estimates in both space and time, assuming a sufficiently regular exact solution
and zero initial velocity.
The tangent plane scheme (TPS) and angular momentum method (AMM) of~\cite{ruggeri2022numerical}
are two fully discrete finite element methods for the iLLG equation, and the author proves
the convergence to a weak solution.
The TPS is based upon
the classical algorithm from~\cite{alouges2008a}
and is linear,
but suffers from significant damping and, due to the application of a
nodal projection step, requires weakly acute meshes,
whereas the AMM is a nonlinear approach based upon a method for wave maps~\cite{karper2014new,bartels2015fast},
which is excellent for conservation properties,
but requires the solution of a nonlinear system at each time step.
An alternative nonlinear method has been proposed in~\cite{d2024midpoint},
based upon the midpoint scheme for the LLG equation~\cite{d2005geometrical},
which again has excellent conservation properties,
even in the conservative case, but requires the
solution of a nonlinear system at each time step. The midpoint scheme is not analyzed
in~\cite{d2024midpoint}.
We further mention that
the midpoint schemes of~\cite{d2024midpoint,chen2024convergence}
and the AMM of~\cite{ruggeri2022numerical} are formally of second order in time,
the method of~\cite{li2021second} is formally of second order in time and space,
while the TPS of~\cite{ruggeri2022numerical} is only of first order in time.
Recently, a numerical scheme for the iLLG equation including Maxwell's equations has been
proposed and analyzed~\cite{kitengeso2025convergent}.

The solvability of the nonlinear systems arising in the methods from~\cite{d2024midpoint,ruggeri2022numerical,chen2024convergence}
is typically ensured via a fixed-point iteration; however, a Courant--Friedrichs--Lewy (CFL)-type condition is required to guarantee unique solvability.
For the AMM, a weak CFL condition of the form $k = O(h)$ is required,
but for the midpoint scheme of~\cite{chen2024convergence},
a stronger CFL condition of the form $k = O(h^2)$ is needed.
Here, \(h\) and \(k\) denote the mesh size and the time-step size, respectively.

For the classical LLG equation, i.e., without inertial effects, the numerical analysis is by now well developed. Existing works can roughly be divided into schemes which converge with rates towards sufficiently smooth strong solutions~\cite{cimrak2005,bs2006,gao2014,an2016,afkl2021,als2025,afp2026_partII} (see, e.g.,~\cite{cf2001,feischl2017regular} for the existence of such regular solutions) and schemes which guarantee convergence, along subsequences, towards weak solutions~\cite{aj2006,bp2006,bkp2008,alouges2008a,kw2018,afp2026_partI}. Among the latter, the TPS of~\cite{aj2006,alouges2008a} requires only one linear system per time step, as opposed to, e.g., the nonlinear midpoint scheme of~\cite{bp2006}, but is only of first order in time.
Previous TPSs have been proposed for the LLG equation
that remove the nodal projection step,
thereby removing the need for weakly acute meshes~\cite{abert_spin-polarized_2014,bartels2016}.
This, however, comes at the cost of losing exact preservation of the unit length constraint,
and instead allows for a unit length constraint
violation that is proportional to the time-step size.
For such projection-free TPSs, convergence towards weak solutions is shown in~\cite{abert_spin-polarized_2014,hpprss2019}, while a priori error estimates for sufficiently smooth solutions of the LLG equation coupled with the eddy current equations are derived in~\cite{feischl2017eddy}. Convergent finite element schemes for the LLG equation coupled with the full or quasi-static Maxwell equations, as well as with the eddy current equations, are analyzed, e.g., in~\cite{bbp2008,lt2013,bpp2015,lppt2015}. Higher-order linear schemes include the almost second-order TPSs of~\cite{akst2014,dpprs2019}, the linearly implicit BDF methods of order up to five of~\cite{afkl2021}, and second-order BDF-type schemes, both for finite elements~\cite{yj2021,afp2026_partI,afp2026_partII} and finite differences~\cite{xie2020second}.
Recent gradient flow-based iterative methods for harmonic maps have been proposed that
improve the unit length constraint violation
to being proportional to the square of the time-step size,
provided some discrete time
regularity condition is satisfied~\cite{akrivis2025quadratic,akrivis2025midpoint,afp2026_partI,dong2024bdf}.

\subsection{Contributions}
In this work,
following~\cite{akrivis2025quadratic,afp2026_partI},
we propose a fully discrete integrator for the iLLG equation
with a time discretization based on the two-step backward differentiation formula (BDF2).
More precisely,
we extend a projection-free version of the TPS of~\cite{ruggeri2022numerical},
with a BDF2 update
for the magnetization, and an extrapolation for the tangent space.
To treat the inertial
contribution to the iLLG equation,
we apply a \(\beta\)-weighted sum of BDF2 and backward Euler time-steppings
for a parameter \(\beta\in[0,1]\), where setting \(\beta = 0\) yields a full BDF2 method.
We prove well-posedness of the resulting integrator,
we show that the approximations satisfy a discrete energy law which mimics that of the continuous problem,
and we prove the unconditional stability of the method (in the sense that
there are no CFL conditions relating the mesh and time-step parameters).
The latter,
combined with the improved time regularity of the iLLG equation,
allows us to
eliminate the discrete regularity condition mentioned above,
yielding unconditional
second-order accuracy in the unit length constraint (if $\beta\in(0,1]$).
In line with the accelerated gradient descent methods of~\cite{dong2024bdf},
the unit length constraint
violation can even be shown to be of third order for all $\beta\in[0,1]$, subject to a discrete time regularity condition.
Furthermore,
we provide a proof of unconditional weak convergence of the approximations 
to a weak solution of the iLLG equation,
in line with~\cite{ruggeri2022numerical}.
Moreover, as an important contribution, we extend to the iLLG equation
the argument introduced for the LLG equation in~\cite{afp2026_PartIII}, and pass to the limit in the discrete energy inequality without imposing any CFL-type restriction. 
This improves \cite{afp2026_partI}, which required a CFL condition in the first and last time steps to prove the energy inequality. As a consequence, our integrator is fully unconditional and does not require any coupling between the meshsize $h$ and the time-step size $k$.
Numerical experiments confirm that the method yields unconditional second-order accuracy
in the unit length constraint, which improves to
third-order under a discrete time regularity condition.
We also provide numerical evidence that setting \(\beta = 0\)
yields second-order accuracy in time in the $\bH^1$-norm.

\subsection{Outline}
In Section~\ref{sec:model}, we state the initial boundary value problem for the iLLG equation
and the definition of its weak solution. In Section~\ref{sec:ingredients},
we give the necessary notation and ingredients for introducing our method.
In Section~\ref{sec:main}, we state our fully discrete algorithm, and remark on the
weighting parameter \(\beta\).
In Section~\ref{sec:numerics}, we assess the performance of the scheme
and validate our theoretical results numerically via
a series of experiments. In particular, we demonstrate the
order of the unit length constraint, and the order of accuracy in time, along with the effects
of the \(\beta\) parameter.
Lastly, in Section~\ref{sec:proofs}, we collect the proofs of all of our results.

\subsection{General notation.}
Without any ambiguity, we denote by $|\cdot|$ the measure of a measurable set in $\R^d$, the absolute value of a scalar in $\R$, 
and the Euclidean norm of a vector in $\R^d$.
We employ standard conventions for Lebesgue, Sobolev, and Bochner spaces throughout this work,
adopting boldface notation for vector-valued and matrix-valued function spaces,
such as $\bL^2(\Omega) \coloneqq L^2(\Omega;\R^3)$ and $\bH^1(\Omega)\coloneqq H^1(\Omega;\R^3)$.
For $\uu, \vv \in \bL^2(\Omega)$, the $L^2$-inner product over $\Omega\subset\R^d$, $d=2,3$, is defined as
\begin{align*}
 \langle\uu,\vv\rangle 
 \coloneqq 
 \langle\uu,\vv\rangle_{\bL^2(\Omega)}= \int_\Omega\uu(\xx)\cdot\vv(\xx)\,\mathrm{d}\xx,
\end{align*}
with the corresponding norm denoted by $\norm{\cdot}$, i.e., $\norm{\uu} \coloneqq \langle\uu,\uu\rangle^{1/2}$.
A different convention applies to the space-time domain $\OT\coloneqq \Omega \times [0,T]$,
where we denote the scalar product and norm by $\langle\cdot,\cdot\rangle_{\OT}$ and $\norm[\OT]{\cdot}$, respectively.
All remaining norms are explicitly specified when introduced.

By $C > 0$ we denote a generic constant, which is always independent of the discretization parameters,
and may differ at each use. We adopt the shorthand $A\lesssim B$ to mean $A\leq C \, B$
for some constant $C > 0$ whose interpretation is clear from context, and always independent of the discretization parameters \(h\) and \(k\).
Additionally, $A \simeq B$ indicates the two-way inequality $A \lesssim B \lesssim A$.


\section{Model problem}\label{sec:model}
Consider a bounded Lipschitz domain $\Omega\subset \R^d$, with $d\in\{2,3\}$, 
describing a ferromagnetic body.
At subpicosecond time scales,
the ultrafast dynamics of the magnetization
$\mm:\Omega\times(0,\infty)\to \mathbb{S}^2\coloneq\{\boldsymbol{x}\in \R^3 \colon |\boldsymbol{x}|=1\}$
is governed by the following initial boundary value problem for the iLLG equation:
\begin{subequations}\label{eq: iLLG system}
\begin{empheq}[left=\empheqlbrace]{align}
    &\mmt = -\mm \times \heff[\mm]
    + \alpha \mm \times \mmt
    + \tau \mm \times \mmtt && \text{in} \quad \Omega \times (0,\infty),\label{eq:iLLG}\\
    &\partial_{\nn}\mm = \0 && \text{on} \quad \partial\Omega \times (0,\infty),\label{eq:neumann_boundary} \\
    &\mm(0) = \mm^0 && \text{in} \quad \Omega,\label{eq:initial_condition_m}\\
    &\mmt(0) = \vv^0 && \text{in} \quad \Omega.\label{eq:initial_condition_v}
\end{empheq}
\end{subequations}
In~\eqref{eq: iLLG system} (stated in dimensionless form;
see, e.g.,~\cite[Remark~2.2]{ruggeri2022numerical} for details on the rescaling),
\(\alpha>0\) denotes the Gilbert damping parameter
and $\tau > 0$ is the angular momentum relaxation time.
The right-hand side of~\eqref{eq:iLLG} includes,
beyond the classical precession and damping terms,
an additional term with the second time derivative of the magnetization.
This extra term is responsible for nutation dynamics:
rapid oscillations of the magnetization superimposed on the slower damped precession,
occurring at much higher frequencies. Note that setting formally 
\(\tau=0\) in~\eqref{eq:iLLG},
one recovers the classical LLG equation.

The initial conditions satisfy the unit length constraint \(|\mm^0|=1\)
and the compatibility condition \(\mm^0\cdot\vv^0 = 0\).
Note that for \(t>0\) it holds that
\[
    \frac{\mathrm{d}}{\mathrm{d} t}|\mm(t)|^2 = 2\mm(t)\cdot\mmt(t) \stackrel{\eqref{eq:iLLG}}{=} 0.
\]
Therefore, we see that the magnitude of \(\mm\) is conserved in time,
as from \(|\mm^0| = 1\) it follows that \(|\mm(t)| = 1\) for all \(t>0\).
In this work, we consider only the highest order contribution to the energy, namely the exchange energy, given by
\begin{equation}\label{eq: exchange energy}
    \E[\mm] = \frac{1}{2}\int_{\Omega} |\Grad\mm|^2\,\mathrm{d}\xx.
\end{equation}
The negative Gâteaux derivative of the energy functional~\eqref{eq: exchange energy} with respect to the magnetization 
is called the \emph{effective field} \(\heff\) and is the quantity that drives the magnetization dynamics of~\eqref{eq:iLLG}, responsible for the
precession term. In strong form, it is given by
\begin{equation*}\label{eq:effective_field}
    \heff[\mm] = -\frac{\delta\E[\mm]}{\delta\mm} = \Lapl \mm.
\end{equation*}
As opposed to the LLG equation, the iLLG equation is also
associated with a \emph{kinetic energy}, given by $\tau\norm{\mmt}^2/2$,
which motivates the definition of the extended energy functional
\begin{equation}\label{eq: Energy functional}
    \mathcal{J}[\mm,\uu] = \E[\mm] + \frac{\tau}{2}\norm{\uu}^2.
\end{equation}
Within the kinematic framework, the quantity $\mathcal{J}[\mm,\partial_t\mm]$ can be interpreted as
the total energy of the magnetization, consisting of the potential energy $\E[\mm]$ and the kinetic energy $\tau\norm{\partial_t\mm}^2/2$.
Sufficiently smooth solutions to the iLLG equation satisfy a dissipative energy law
\begin{equation}\label{eq:strong_energy_law}
    \frac{\mathrm{d}}{\mathrm{d} t}\mathcal{J}[\mm,\mmt] = -\alpha\norm{\mmt}^2 \leq 0,
\end{equation}
which is interpreted as the total energy of the system being non-increasing in time,
where the dissipation is modulated by the Gilbert damping parameter \(\alpha\).
As in~\cite{ruggeri2022numerical}, we consider weak solutions 
of~\eqref{eq: iLLG system} in the following sense:
\begin{definition}[Global weak solution]\label{def: weak solution}
    Let $\mm^0\in\bH^1(\Omega;\mathbb{S}^2)$
    and \(\vv^0\in\bL^2(\Omega)\)
    such that \(\mm^0\cdot\vv^0 = 0\) a.e.\ in \(\Omega\).
    A vector field \(\mm:\Omega\times(0,\infty)\to\R^3\) is called
    a \emph{global weak solution} of~\eqref{eq: iLLG system}
    if \(\mm\in L^\infty(0,\infty;\bH^1(\Omega))\cap W^{1,\infty}(0,\infty;\bL^2(\Omega))\)
    and, for all \(T>0\), the following properties are satisfied:
    \begin{enumerate}[label={\rm(\roman*)}, ref={\rm\roman*}]
        \item\label{def: weak solution i} $\mm\in\bH^1(\Omega_T)$ satisfies $|\mm|=1$ a.e.\ in $\OT$;
        \item\label{def: weak solution ii} \(\mm(t) \to \mm^0\) in $\bH^1(\Omega)$ and \(\partial_t\mm(t) \to \vv^0\) in $\bL^2(\Omega)$ as $t \to 0$;
        \item\label{def: weak solution iii}
        For all \(\vvarphi\in C_{c}^{\infty}([0,T);\bH^1(\Omega))\), it holds that 
        \begin{equation}\label{eq:weak_solution var_formulation}
            \begin{split}
            \int_{0}^{T}\inner{\mmt(t)}{\vvarphi(t)}\dtt &=
            - \int_{0}^{T}\inner{\heff[\mm(t)]}{\vvarphi(t)\times\mm(t)}\dtt
            + \alpha\int_{0}^{T}\inner{\mm(t)\times\mmt(t)}{\vvarphi(t)}\dtt\\
            &- \tau\int_{0}^{T}\inner{\mm(t)\times\mmt(t)}{\partial_t\vvarphi(t)}\dtt
            - \tau\inner{\mm^0\times\vv^0}{\vvarphi(0)};
            \end{split}
        \end{equation}
        \item\label{def: weak solution iv} It holds that
        \begin{equation}\label{eq:weak_energy_law}
            \mathcal{J}[\mm(T),\mmt(T)]
            + \alpha \int_0^T\norm{\mmt(t)}^2 \dtt
            \leq \mathcal{J}[\mm^0, \vv^0].
        \end{equation}
    \end{enumerate}
\end{definition}
Definition~\ref{def: weak solution}\eqref{def: weak solution iii} is based on a
variational formulation of~\eqref{eq: iLLG system} over the space-time domain \(\OT = \Omega \times [0,T]\).
Equation~\eqref{eq:weak_solution var_formulation} is derived by
integrating by parts in time, which reduces the regularity requirements on \(\mm\).
In particular, we use the elementary identity
\[
    \partial_t(\mm\times\partial_t\mm) = \partial_t\mm\times\partial_t\mm 
                                        + \mm\times\partial_{tt}\mm
                                        = \mm\times\partial_{tt}\mm.
\]
The term involving the effective field $\heff[\mm]=\Lapl \mm$ is understood as
\[
    \inner{\heff[\mm(t)]}{\vvarphi(t)\times\mm(t)} = \sum_{j=1}^d\inner{\partial_{j}\mm(t)}{\mm(t)\times\partial_j\vvarphi(t)},
\]
where, for sufficiently smooth functions, the identity follows from integration by parts in space and the homogeneous Neumann boundary condition.
Definition~\ref{def: weak solution}\eqref{def: weak solution iv} is a
weak energy law,
being the weak counterpart of the energy law~\eqref{eq:strong_energy_law}.
In Definition~\ref{def: weak solution}\eqref{def: weak solution ii} and~\eqref{def: weak solution iv}, $\mmt$ is evaluated pointwise in time. This is understood for the representative of $\mmt$ that is weakly continuous in time with values in $\bL^2(\Omega)$; such a representative exists for every $\mm\in L^\infty(0,\infty;\bH^1(\Omega))\cap W^{1,\infty}(0,\infty;\bL^2(\Omega))$ satisfying Definition~\ref{def: weak solution}\eqref{def: weak solution i} and~\eqref{def: weak solution iii}; see Substep~3d of the proof of Theorem~\ref{thm: main result}.

\section{Preliminaries}\label{sec:ingredients}

In this section, we collect some notation and preliminary results that will be necessary
to introduce and analyze the fully discrete algorithm we propose to approximate solutions
to the initial boundary value problem~\eqref{eq: iLLG system}.

\subsection{Time discretization}

Given the final time $T$, we consider time-step sizes $k>0$ such that $T=Nk$ for some $N\in \N$. We consider the uniform `partition' of the time interval $[0,T]$
given by $t_j \coloneqq jk$ for all $j \ge 0$.
Given a sequence $\{\uu^j\}_{j \ge 0}$ such that $\uu^j \approx \uu(t_j)$ and $\vv^0 \approx \partial_t \uu(t_0) = \partial_t\uu(0)$
for some vector-valued function $\uu$ defined on $[0,\infty)$,
we define the difference quotients
\begin{align*}
\dt \uu^{j+1} &\coloneqq
\frac{\uu^{j+1} - \uu^{j}}{k}, \quad \text{if } j \ge 0,\\
\dt^2 \uu^{j+1}
&\coloneqq \frac{\dt \uu^{j+1} - \dt{\uu}^j}{k}
=
\frac{\uu^{j+1} - 2\uu^j+\uu^{j-1}}{k^2}, \quad \text{if } j \ge 1.
\end{align*}
Furthermore, we consider the approximations $\vv^j \approx \partial_t \uu(t_j)$ given by
\begin{equation*}
\vv^{1} \coloneqq \dt \uu^1
\quad
\text{and}
\quad
\vv^{j+1} \coloneqq \frac{1}{2k}(3\uu^{j+1} - 4 \uu^{j} + \uu^{j-1})
\quad \text{for all } j \ge 1,
\end{equation*}
based on the backward Euler method
and the BDF2 method, respectively.
Note that
\begin{equation*}
\uu^1 = \uu^0 + k \vv^1,
\quad
\text{and}
\quad
\uu^{j+1} = \frac{1}{3}(4\uu^{j} - \uu^{j-1} + 2k\vv^{j+1})
\quad \text{for all } j \ge 1.
\end{equation*}
We also define the extrapolations
\begin{equation*}
\widehat\uu^1 = \uu^0,
\quad
\text{and}
\quad
\widehat\uu^{j+1} = 2\uu^{j} - \uu^{j-1}
\quad \text{for all } j \ge 1.
\end{equation*}
Consider the pairs given by $\mathcal{U}^{j+1}\coloneq(\uu^{j+1},\uu^{j})$ for all $j \ge 0$.
Define the symmetric matrix $G=(g_{ij})_{i,j=1,2}$
with entries $g_{11}=5/4,\,g_{12}=-1/2,\,g_{22}=1/4$.
For all pairs, we consider the BDF2-adapted $G$-norm (see, e.g.,~\cite{Dahlquist} or~\cite[Section~V.6]{hairer})
\begin{equation}\label{eq:G-norm}
    \begin{split}
    |\mathcal{U}^{j+1}|^2_G = (G\mathcal{U}^{j+1})\cdot\mathcal{U}^{j+1}
                            &= g_{11}|\uu^{j+1}|^2 + 2g_{12}\uu^j\cdot\uu^{j+1}
                            + g_{22}|\uu^{j}|^2\\
        &=\frac14(|\uu^{j+1}|^2 + |2\uu^{j+1}-\uu^{j}|^2).
    \end{split}
\end{equation}
Since $G$ is a positive-definite matrix with eigenvalues
$\mu_{\pm} = (3\pm 2\sqrt{2})/4$, there holds the norm equivalence
\begin{equation}\label{eq: G-norm-equivalence}
    \mu_{-}(|\uu^{j+1}|^2+|\uu^{j}|^2) \le |\mathcal{U}^{j+1}|^2_G
                             \le \mu_{+}(|\uu^{j+1}|^2+|\uu^{j}|^2).
\end{equation}
Moreover,
the $G$-norm satisfies the identities (cf.~\cite{akrivis2025quadratic,afp2026_partI})
\begin{align}\label{eq:G_inner_product}
    \vv^{j+1} \cdot \uu^{j+1} &= \dt|\mathcal{U}^{j+1}|_G^2
        + \frac{k^3}{4}|\dt^2\uu^{j+1}|^2\quad \text{for all } j \ge 1,\\\label{eq:G_norm_difference}
    |\mathcal{U}^{j+1}|^2_G  &= \frac{3}{4}|\uu^{j+1}|^2
                        - \frac{1}{4}|\uu^{j}|^2
                        + \frac{1}{2}|\uu^{j+1}-\uu^{j}|^2\quad \text{for all } j \ge 0.
\end{align}

For \(\{\phi^j\}_{j\ge0}\),
we define the time reconstructions
$\phi_{k}$, $\phi^-_{k}$, $\phi^+_{k}$,
defined, for all $j\ge0$ and $t \in [t_j,t_{j+1})$, by
\begin{equation}\label{eq:reconstructions}
\phi_{k}(t) \coloneqq \frac{t-t_j}{k}\phi^{j+1} + \frac{t_{j+1} - t}{k}\phi^j,
\quad
\phi_{k}^-(t) \coloneqq \phi^j,
\quad
\phi_{k}^+(t) \coloneqq \phi^{j+1}.
\end{equation}
Note that
$\partial_t \phi_k (t) =  \dt \phi^{j+1}$
for all $t \in (t_j,t_{j+1})$.

\subsection{Space discretization}
For the space discretization, let $\{\T_h\}_{h>0}$ be a $\gamma$-quasi-uniform family of meshes
of $\Omega$ into triangles (if $d=2$) or tetrahedra (if $d=3$), i.e.,
\begin{align*}
 \gamma^{-1} h \le |K|^{1/d} \le \diam(K) \le h
 \quad \text{for all } K \in \T_h \text{ and all } h > 0,
\end{align*}
where $h_K = \diam(K)$ for all $K \in \T_h$, and
$h=\max_{K\in\T_{h}}h_K$ is the mesh size of $\T_{h}$.
We denote by $\NN_{h}$ the set of nodes in the mesh $\T_{h}$.
For all $K \in \T_h$,
we denote by $\mathcal{P}_{1}(K)$ the space of
polynomials of degree at most $1$ on $K$.
We denote by $\S^1(\T_{h})$ the space of piecewise
affine and globally continuous functions from $\Omega$ to $\R$,
i.e.,
\begin{equation*}
\S^1(\T_{h}) = \{\phi_{h}\in C(\overline{\Omega})\colon\phi_{h}\vert_{K}\in \mathcal{P}_{1}(K)
\text{ for all } K\in\T_{h}\} \subset H^1(\Omega).
\end{equation*}
We denote by $\II_{h}\colon C(\overline{\Omega})\to \S^1(\T_{h})$ the nodal interpolant
satisfying $\II_{h}[\phi](\zz) = \phi(\zz)$ for each $\zz\in\NN_{h}$,
where $\phi$ is a continuous function.

Since the unknowns of the problem in which we are interested are vector fields,
we consider the vector-valued finite element
space $\boldsymbol{\S}^1(\T_{h}) = \S^1(\T_h)^3$
and denote the vector-valued nodal interpolant as
$\boldsymbol{\II}_{h}\colon\CC(\overline{\Omega})\to \boldsymbol{\S}^1(\T_{h})$.
For all $j \ge 0$ the approximate magnetization
$\mm_h^j \approx \mm(t_j)$ will be sought in the set
\begin{equation*}
\boldsymbol{\mathcal{M}}_{h,\delta}
= \big\{\pphi_{h}\in \boldsymbol{\S}^1(\T_{h})\colon
    \abs{\pphi_{h}(\zz)}\geq 1\text{ for all } \zz\in\NN_{h}\,
    \text{and}\,\norm[L^1(\Omega)]{\II_{h}\left[|\pphi_h|^2\right] - 1}
    \leq \delta \big\}
\end{equation*}
for some $\delta\mathrel{\ge}0$.
Note that discrete magnetizations in
$\boldsymbol{\mathcal{M}}_{h,\delta}$ generally do not satisfy the unit length constraint,
not even at the vertices of the mesh,
but the error is controlled in the $L^1$-sense by $\delta$.
The approximations constructed by algorithms in~\cite{abert_spin-polarized_2014,bartels2016, afp2026_partI, afp2026_partII} 
belong to this set with $\delta=\mathcal{O}(k)$.
For the case $\delta=0$, we obtain the set
\begin{equation*}
\boldsymbol{\mathcal{M}}_{h,0} = \{\pphi_{h}\in \boldsymbol{\S}^1(\T_{h})\colon\abs{\pphi_{h}(\zz)}=1\text{ for all } \zz\in\NN_{h}\},
\end{equation*}
in which the constraint holds at the vertices of the mesh.
Approximate magnetizations in this set are usually obtained by applying a nodal projection step
or designing nonlinear constraint-preserving time discretizations; see, e.g., \cite{alouges2008a,bkp2008,bp2006}.

Another important property of solutions to the iLLG equation is
the orthogonality $\mm\cdot\mmt = 0$.
The fundamental ingredient of the TPS to realise it at the discrete level,
given an approximation $\mm_h^j \approx \mm(t_j)$ in $\boldsymbol{\mathcal{M}}_{h,\delta}$,
is the discrete tangent space
\begin{equation*}
\bT_{h}(\mm_{h}^j)
    = \{\ppsi_{h}\in \boldsymbol{\S}^1(\T_{h})\colon \mm_{h}^j(\zz)\cdot\ppsi_{h}(\zz)
        = 0 \text{ for all } \zz\in\NN_{h}\},
\end{equation*}
where approximations $\vv_h^j \approx \mmt(t_j)$ will be sought.
Note that the desired orthogonality property is imposed only at the vertices of the mesh.

\section{Algorithm and main results}\label{sec:main}

We now present our fully discrete algorithm
to approximate solutions to the iLLG equation~\eqref{eq: iLLG system}.

\begin{algorithm}\label{algorithm}
\textbf{Input:}
Conforming mesh $\mathcal{T}_h$ of $\Omega$ with mesh size $h>0$,
time-step size $k>0$,
$\beta\in[0,1]$,
$\mm_h^0\in \boldsymbol{\mathcal{M}}_{h,0}$,
$\vv_{h}^{0} \in \bT_{h}(\mm_{h}^{0})$.
\begin{enumerate}[label=\rm (\roman*), ref=\rm \roman*]
\item\label{eq: first step full discr item} 
For $j=0$, repeat the following steps~\eqref{item: var form init}--\eqref{item: def m one}:
\begin{enumerate}[label=\rm(\alph*), ref=\rm \alph*]
    \item\label{item: var form init}
     Compute $\vv_h^1\in \bT_h(\mm_h^0)$ such that,
    for all $\pphi_h\in \bT_h(\mm_h^0)$,
    \begin{equation}\label{alg1:magnetization_update_first}
        \tau\inner{\dt\vv_h^1}{\pphi_h}
        +\alpha \inner{\vv_h^1}{\pphi_{h}}
        + \inner{\mm_{h}^{0}\times \vv_h^1}{\pphi_{h}}
        +  k\inner{\Grad\vv_h^1}{\Grad \pphi_{h}}
        = -\inner{\Grad\mm_h^0}{\Grad \pphi_{h}}
    \end{equation}
    \item\label{item: def m one}
     Set $\mm_{h}^{1} \coloneqq \mm_{h}^{0} + k \vv_h^1$.
\end{enumerate}
\item\label{item: next steps} 
For $j \ge 1$, repeat the following
steps~\eqref{item: def mhat}--\eqref{item: def mmhj}:
\begin{enumerate}[label=\rm(\alph*), ref=\rm \alph*]
\item\label{item: def mhat}
Set $\widehat{\mm}_h^{j+1}\coloneqq2\mm_h^{j}-\mm_h^{j-1}$.
\item\label{item: var form}
Compute $\vv_h^{j+1}\in \bT_h(\widehat{\mm}_h^{j+1})$ such that,
for all $\pphi_h\in \bT_h(\widehat{\mm}_h^{j+1})$,
\begin{equation}\label{alg1:magnetization_update}
    \begin{split}
    \beta\tau&\inner{\dt\vv_h^{j+1}}{\pphi_h}
    + (1-\beta)\frac{ \tau }{k}\inner{\frac{3}{2}\vv_h^{j+1} - 2\vv_h^{j} + \frac{1}{2}\vv_h^{j-1}}{\pphi_h}
    +\alpha \inner{\vv_{h}^{j+1}}{\pphi_{h}}\\
    &+ \inner{\mhat_h^{j+1}\times \vv_{h}^{j+1}}{\pphi_{h}}
    + \frac{2}{3}k\inner{\Grad\vv_h^{j+1}}{\Grad \pphi_{h}} 
    = -\frac{1}{3}\inner{\Grad(4\mm_h^j-\mm_h^{j-1})}{\Grad \pphi_{h}}.
    \end{split}
\end{equation}
\item\label{item: def mmhj}
Set $\mm_h^{j+1} \coloneqq \dfrac{4}{3}\mm_h^{j} - \dfrac{1}{3}\mm_h^{j-1} + \dfrac{2}{3}k \vv_h^{j+1}$.
\end{enumerate}
\end{enumerate}
\textbf{Output:} Sequences $\vv_h^j\approx\partial_t\mm(t_j)$
and $\mm_h^{j+1}\approx\mm(t_{j+1})$ for all $j \ge 0$.
\end{algorithm}

Note that in practical computations the number of iterations is not infinite,
but the algorithm stops when $jk \ge T$, where $T>0$ denotes some final time of interest.

Algorithm~\ref{algorithm} resembles the second-order BDF2
algorithm for the LLG equation given in~\cite{afp2026_partI},
but has an extra term due to the iLLG inertial term. 
To handle it,
a parameter \( \beta \in [0,1] \) is introduced
to split its discretization between two
components: a \(\beta\)-weighted first-order finite
difference derivative of \(\vv_h^{j+1}\),
and a \((1-\beta)\)-weighted second-order BDF2
finite difference derivative of \(\vv_h^{j+1}\).
This choice is motivated by complementary analytical
and numerical considerations:
as shown in Proposition~\ref{prop:stability} below,
the first-order discretization of the
inertial term ensures a bound on the quantity
\(
k^2\sum_{j=0}^{N-1}\norm{\dt \vv_h^{j+1}}^2,
\)
which is not guaranteed by the second-order BDF2 discretization alone.
Moreover, as shown later in~\eqref{eq: bound dt2 m j+1} in Lemma~\ref{prop:dtt_stability},
this bound is crucial in order to bound the quantity
\(
k^2\sum_{j=1}^{N-1}\norm{\dt^2\mm_h^{j+1}}^2,
\)
which is necessary to pass to the limit and
verify Definition~\ref{def: weak solution}(\ref{def: weak solution iii}),
as well as to guarantee unconditional second-order
accuracy in the unit length constraint error,
i.e., \eqref{eq:unit_length_constraint}.
Nevertheless, in the numerical experiments in Section~\ref{sec:numerics} we 
note that, in practice, using only the second-order BDF2
discretization (i.e., setting \(\beta = 0\)) still achieves
the second-order accuracy in the unit length constraint error. Also, 
incorporating the second-order BDF2 discretization yields, in our experiments, better
accuracy, especially regarding the energy decay and order of convergence.
The first-order discretization was used in~\cite{ruggeri2022numerical},
where it was noted that the tangent plane scheme suffered from
numerical dissipation when compared to a nonlinear scheme.
As the scheme in this work is formally of second order when \(\beta=0\),
this dissipation is less restrictive, but still present.
The use of the BDF2 discretization for the velocity further reduces this dissipation,
as will be shown in Section~\ref{sec:numerics}.
This comes at a technical cost, as the analysis
of convergence becomes much more involved.

In the following propositions,
we establish well-posedness,
energetic behavior,
stability,
and error bounds for the constraint violation.
The proofs are deferred to Section~\ref{sec:proofs}.

\begin{proposition}[well-posedness and discrete energy law]\label{prop: energy-estimate-1}
Each step of Algorithm~\ref{algorithm} is well posed. Moreover, the approximations generated by Algorithm~\ref{algorithm}
satisfy
\begin{equation} \label{eq:initial_energy_law}
    \mathcal{J}[\mm_h^1,\vv_h^1] + \alpha k \norm{\vv_h^1}^2 + \frac{\tau k^2}{2}\norm{\dt\vv_h^1}^2
     + \frac{k^2}{2}\norm{\Grad \vv_h^1}^2 = \mathcal{J}[\mm_h^0,\vv_h^0]
\end{equation}
and, for any $j\ge 1$,
\begin{align}
        &\mkern-15mu\mathcal{J}[\mm_h^{j+1},\vv_h^{j+1}] + \alpha k \norm{\vv_h^{j+1}}^2 + \frac{k^2}{2} \Big[\tau \norm{\dt \vv_h^{j+1}}^2
        +  \norm{\dt \Grad \mm_h^{j+1}}^2\Big] + \frac{k^4}{4}\norm{\dt^2\Grad\mm_h^{j+1}}^2\nonumber\\
        & \mkern+50mu+ \frac{k}{4} \dt\norm{\Grad\mm_h^{j+1}}^2
        + (1-\beta)\tau\Big[\frac{k^4}{4} \norm{\dt^2\vv_h^{j+1}}^2 + \frac{k}{4}\dt\norm{\vv_h^{j+1}}^2\Big] \label{eq:discrete_energy_equality}\\
        &\mkern-15mu= \mathcal{J}[\mm_h^j,\vv_h^j] + \frac{k}{4} \dt \norm{\Grad\mm_h^{j}}^2 
        + \frac{k^2}{2}\norm{\dt\Grad\mm_h^j}^2
        +(1-\beta) \tau \Big[\frac{k^2}{2} \norm{\dt\vv_h^{j}}^2 + \frac{k}{4}\dt\norm{\vv_h^{j}}^2\Big].\nonumber
\end{align}
Finally, for all $n \ge 2$, it holds that
\begin{equation}\label{eq:corollary_energy_law}
    \begin{split}
    &\mathcal{J}[\mm_h^n,\vv_h^n]
    + \alpha k\sum_{j=0}^{n-1}\norm{\vv_h^{j+1}}^2
    +\beta\frac{\tau k^2}{2}\sum_{j=0}^{n-1}\norm{\dt\vv_h^{j+1}}^2 
    +  \frac{k^2}{2}\norm{\dt\Grad\mm_h^n}^2\\
    & + (1-\beta)\tau \Big[\frac{k}{4}\dt\norm{\vv_h^n}^2
    +\frac{k^2}{2}\norm{\dt\vv_h^n}^2
    +\frac{k^4}{4}\sum_{j=1}^{n-1}\norm{\dt^2\vv_h^{j+1}}^2\Big]
    + \frac{k^4}{4}\sum_{j=1}^{n-1}\norm{\dt^2\Grad\mm_h^{j+1}}^2 \\
    &    + \frac{k}{4}\dt\norm{\Grad\mm_h^n}^2=
    \mathcal{J}[\mm_h^0,\vv_h^0]+ \frac{k}{4}\dt\norm{\Grad\mm_h^1}^2
    + (1-\beta)\frac{\tau k}{4}\dt\norm{\vv_h^1}^2.      
    \end{split}
\end{equation}
\end{proposition}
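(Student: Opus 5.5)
The plan is to establish well-posedness by a coercivity argument on the finite-dimensional discrete tangent spaces. The energy identities then follow by testing the variational problems~\eqref{alg1:magnetization_update_first} and~\eqref{alg1:magnetization_update} with the discrete velocity itself and using the $G$-norm identities~\eqref{eq:G_inner_product}--\eqref{eq:G_norm_difference}.

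\emph{Well-posedness.} For fixed $j$, the problem~\eqref{alg1:magnetization_update} is a square linear system posed on the finite-dimensional space $\bT_h(\widehat{\mm}_h^{j+1})$; for $j=0$ the space is $\bT_h(\mm_h^0)$. The bilinear form acting on the unknown is
\[
\Big(\frac{\beta\tau}{k}+\frac{3(1-\beta)\tau}{2k}+\alpha\Big)\inner{\vv_h}{\pphi_h}+\inner{\mhat_h^{j+1}\times\vv_h}{\pphi_h}+\frac{2}{3}k\inner{\Grad\vv_h}{\Grad\pphi_h}.
\]
For $\pphi_h=\vv_h$ the cross-product term vanishes pointwise. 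The form is therefore coercive in $\bL^2(\Omega)$, since $\alpha>0$. Hence the homogeneous problem has only the trivial solution, and the Lax--Milgram lemma yields existence and uniqueness. The first step is handled identically.

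\emph{First-step energy law.} I would test~\eqref{alg1:magnetization_update_first} with $\pphi_h=\vv_h^1$, which is admissible, and multiply by $k$. The inertial term becomes
\[
\tau k\inner{\dt\vv_h^1}{\vv_h^1}=\frac{\tau}{2}\bigl(\norm{\vv_h^1}^2-\norm{\vv_h^0}^2\bigr)+\frac{\tau k^2}{2}\norm{\dt\vv_h^1}^2,
\]
using $2a\cdot(a-b)=|a|^2-|b|^2+|a-b|^2$. Since $\Grad\mm_h^0=\Grad\mm_h^1-k\Grad\vv_h^1$, the gradient terms combine into $k\inner{\Grad\mm_h^1}{\Grad\vv_h^1}=\inner{\Grad\mm_h^1}{\Grad(\mm_h^1-\mm_h^0)}$. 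Applying the same polarization identity to this expression gives $\E[\mm_h^1]-\E[\mm_h^0]+\frac{k^2}{2}\norm{\Grad\vv_h^1}^2$. Collecting terms yields~\eqref{eq:initial_energy_law}.

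\emph{Steps $j\ge1$.} I would test~\eqref{alg1:magnetization_update} with $\vv_h^{j+1}$ and multiply by $k$. By step~\eqref{item: def mmhj} of Algorithm~\ref{algorithm}, $\frac23k\Grad\vv_h^{j+1}+\frac13\Grad(4\mm_h^j-\mm_h^{j-1})=\Grad\mm_h^{j+1}$. Hence the gradient contribution equals $k\inner{\Grad\mm_h^{j+1}}{\vv^{j+1}}$, where $\vv^{j+1}$ is exactly the BDF2 quotient of $\uu=\Grad\mm_h$. Applying~\eqref{eq:G_inner_product} (integrated over $\Omega$) gives $k\,\dt|\mathcal{U}^{j+1}|_G^2+\frac{k^4}{4}\norm{\dt^2\Grad\mm_h^{j+1}}^2$. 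I then rewrite the difference of $G$-norms via~\eqref{eq:G_norm_difference}:
\[
|\mathcal{U}^{j+1}|_G^2-|\mathcal{U}^{j}|_G^2=\tfrac12\bigl(\norm{\uu^{j+1}}^2-\norm{\uu^j}^2\bigr)+\tfrac{k}{4}\bigl(\dt\norm{\uu^{j+1}}^2-\dt\norm{\uu^j}^2\bigr)+\tfrac{k^2}{2}\bigl(\norm{\dt\uu^{j+1}}^2-\norm{\dt\uu^j}^2\bigr).
\]
This produces exactly the $\E$, $\frac k4\dt\norm{\Grad\mm_h}^2$ and $\frac{k^2}{2}\norm{\dt\Grad\mm_h}^2$ terms on both sides of~\eqref{eq:discrete_energy_equality}.

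The $(1-\beta)$-weighted inertial term is again a BDF2 quotient, now applied to the sequence $\vv_h^j$. It is treated by the same two identities with $\uu=\vv_h$, which gives the bracketed $(1-\beta)\tau$ terms. The $\beta$-weighted term is treated by the polarization identity as in the first step. The remaining terms are direct: the damping term gives $\alpha k\norm{\vv_h^{j+1}}^2$, and the cross-product term vanishes. Adding the $\beta$ and $(1-\beta)$ contributions to $\frac\tau2\norm{\vv_h}^2$ and $\frac{\tau k^2}{2}\norm{\dt\vv_h}^2$ reproduces $\mathcal{J}$ and the $\frac{k^2}{2}\tau\norm{\dt\vv_h^{j+1}}^2$ term in~\eqref{eq:discrete_energy_equality}.

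\emph{Summation.} Summing~\eqref{eq:discrete_energy_equality} from $j=1$ to $n-1$ telescopes all terms of the form $X^{j+1}-X^j$. This leaves the $j=1$ right-hand side terms $\frac{k^2}{2}\norm{\dt\Grad\mm_h^1}^2$, $(1-\beta)\frac{\tau k^2}{2}\norm{\dt\vv_h^1}^2$ and $\mathcal{J}[\mm_h^1,\vv_h^1]$. I then substitute~\eqref{eq:initial_energy_law}. Because $\dt\Grad\mm_h^1=\Grad\vv_h^1$, the term $\frac{k^2}{2}\norm{\Grad\vv_h^1}^2$ cancels. The term $\frac{\tau k^2}{2}\norm{\dt\vv_h^1}^2$ splits into a $(1-\beta)$ part, which cancels, and a $\beta$ part, which becomes the $j=0$ summand of the $\beta$-sum. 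The $\frac k4\dt$ terms at $j=1$ are what remain on the right of~\eqref{eq:corollary_energy_law}.

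The main obstacle is this bookkeeping: matching the per-step $\frac{k^2}{2}\tau\norm{\dt\vv_h^{j+1}}^2$ on the left with the $(1-\beta)$-weighted term on the right of the next step, so that only the $\beta$-fraction accumulates in the sum. It also requires checking that the first step's mismatched quantities cancel exactly rather than leaving residual terms.
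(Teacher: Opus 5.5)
Your proposal is correct and follows the paper's proof essentially step by step. The ingredients match: Lax--Milgram on the discrete tangent space, testing with $k\vv_h^{j+1}$, the $G$-norm identities \eqref{eq:G_inner_product}--\eqref{eq:G_norm_difference} applied to both $\Grad\mm_h$ and $\vv_h$, and summation with insertion of \eqref{eq:initial_energy_law}. The only cosmetic difference is that you fold the $(1-\beta)$-part of $\frac{\tau k^2}{2}\norm{\dt\vv_h^{j+1}}^2$ into the telescoping quantity, whereas the paper keeps both $\dt\vv_h$-sums and recombines them with an explicit identity at the end.
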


Looking at the discrete energy law~\eqref{eq:corollary_energy_law},
we see that, in addition to the iLLG-inherent $\alpha$-modulated energy dissipation of the total energy~\eqref{eq: Energy functional},
there appear positive terms on the left-hand side (yielding artificial dissipation) as well as unsigned perturbations
arising from the time discretization.

We make the following assumptions on the approximate initial data:
\begin{enumerate}[label=(A\arabic*), ref=A\arabic*]
\item\label{enum: A1} 
    $\mm_h^0 \in \boldsymbol{\mathcal{M}}_{h,0}$ satisfies
    $\mm_h^0 \to \mm^0$ in $\bH^1(\Omega)$ as $h\to 0$,
    which implies $\norm[\bH^1(\Omega)]{\mm_h^0}\lesssim 1$;
\item\label{enum: A2}
    $\vv_h^0 \in \bT_h(\mm_h^0)$ satisfies
    $\vv_h^0 \to \vv^0$ in $\bL^2(\Omega)$ as
    $h\to 0$, which implies $\norm{\vv_h^0}\lesssim 1$.
\end{enumerate}

\begin{proposition}[stability and constraint violation]\label{prop:stability}
Suppose that assumptions \eqref{enum: A1}--\eqref{enum: A2} are satisfied.
Let \(\beta \in[0,1]\).
For all $n \ge 2$,
the approximations generated by Algorithm~\ref{algorithm}
are uniformly bounded, in the sense that
\begin{equation}\label{eq:stability}
    \begin{split}
    \norm{\vv_h^{n}}^2
    + \norm[\bH^1(\Omega)]{\mm_h^{n}}^2
    &+ k\sum_{j=0}^{n-1}\norm{\vv_{h}^{j+1}}^2
    + \beta k^2\left(\sum_{j=0}^{n-1}\norm{\dt\vv_h^{j+1}}^2
    + \sum_{j=1}^{n-1}\norm{\dt^2\mm_h^{j+1}}^2\right)\\
    &+ (1-\beta)k^4\sum_{j=1}^{n-1}\norm{\dt^2\vv_h^{j+1}}^2
    + k^4\sum_{j=1}^{n-1}\norm{\dt^2\Grad\mm_h^{j+1}}^2
    \leq C_1.
    \end{split}
\end{equation}
It follows that
\begin{itemize}
\item
if \(\beta \in [0,1]\), then (unconditionally)
\begin{equation}\label{eq:unit_length_constraint_bad}
    \norm[L^1(\Omega)]{\II_{h}\big[|\mm_h^n|^2\big] - 1} \le C_2 k;
\end{equation}
\item
if \(\beta\in(0,1]\), then (unconditionally)
\begin{equation}\label{eq:unit_length_constraint}
    \norm[L^1(\Omega)]{\II_{h}\big[|\mm_h^n|^2\big] - 1} \le C_3 k^2.
\end{equation} 
\end{itemize}
Furthermore,
for any $\beta \in [0,1]$, if \(\vv_h^0 \equiv \0\) and the discrete regularity condition
\begin{equation}\label{eq: regularity condition}
k \norm{\dt\vv_h^1}^2 + k\sum_{j=1}^{n-1}\norm{\dt^2\mm_h^{j+1}}^2 \leq C_4
\end{equation}
holds for some \(C_4>0\) independent of $h$, $k$, and $n$,
then
\begin{equation}\label{eq:unit_length_constraint_third}
	\norm[L^1(\Omega)]{\II_{h}\big[|\mm_h^n|^2\big] - 1} \le C_5 k^3.
\end{equation}
The constants $C_1, C_2, C_3>0$ depend only on $\alpha$, $\tau$, the $\gamma$-quasi-uniformity of $\mathcal{T}_h$, and the constants on the initial data from \eqref{enum: A1}--\eqref{enum: A2}; in addition, $C_1$ depends on $|\Omega|$, and $C_3$ depends on $\beta$. The constant $C_5>0$ depends only on the $\gamma$-quasi-uniformity of $\mathcal{T}_h$ and on $C_4$.
\end{proposition}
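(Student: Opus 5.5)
The plan is to start from the summed discrete energy law \eqref{eq:corollary_energy_law} of Proposition~\ref{prop: energy-estimate-1} and control the right-hand side together with the unsigned terms on the left.

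\textbf{Right-hand side.} By \eqref{eq:initial_energy_law} and \eqref{enum: A1}--\eqref{enum: A2}, $\mathcal{J}[\mm_h^1,\vv_h^1]\le\mathcal{J}[\mm_h^0,\vv_h^0]\lesssim1$. Hence the two perturbations on the right,
\[
\tfrac{k}{4}\dt\norm{\Grad\mm_h^1}^2=\tfrac14\big(\norm{\Grad\mm_h^1}^2-\norm{\Grad\mm_h^0}^2\big)
\quad\text{and}\quad
\tfrac{\tau k}{4}\dt\norm{\vv_h^1}^2 ,
\]
are bounded by constants. For the second one we simply bound $\norm{\vv_h^1}$ by the initial energy.

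\textbf{Unsigned terms on the left.} We regroup them with the $G$-norm identity \eqref{eq:G_norm_difference}:
\[
\tfrac12\norm{\Grad\mm_h^n}^2+\tfrac{k}{4}\dt\norm{\Grad\mm_h^n}^2+\tfrac{k^2}{2}\norm{\dt\Grad\mm_h^n}^2
=\tfrac34\norm{\Grad\mm_h^n}^2-\tfrac14\norm{\Grad\mm_h^{n-1}}^2+\tfrac12\norm{\Grad(\mm_h^n-\mm_h^{n-1})}^2 ,
\]
which equals $|(\Grad\mm_h^n,\Grad\mm_h^{n-1})|_G^2$. The same regrouping applies to $(1-\beta)\tau$ times the $\vv$-terms. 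By \eqref{eq: G-norm-equivalence}, the $G$-norm dominates $\mu_-(\norm{\Grad\mm_h^n}^2+\norm{\Grad\mm_h^{n-1}}^2)$, so every left-hand quantity is bounded.

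In the case $\beta=1$ the $\vv$-part is handled directly by the kinetic term. For $\beta<1$ the kinetic terms combine into $\beta\frac\tau2\norm{\vv_h^n}^2+(1-\beta)\tau|(\vv_h^n,\vv_h^{n-1})|_G^2$, which is nonnegative and controls $\norm{\vv_h^n}^2$.

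\textbf{$L^2$ part of the $\bH^1$ norm.} We write $\mm_h^n$ as $\mm_h^0$ plus a linear combination of $k\vv_h^j$; the BDF2 recursion is stable, with $\mm_h^{j+1}-\mm_h^j$ a geometric-type combination of $k\vv_h^i$. Cauchy--Schwarz together with $k\sum\norm{\vv_h^j}^2\lesssim1$ gives $\norm{\mm_h^n}\lesssim 1+\sqrt{T}$; this is where $|\Omega|$ enters.

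\textbf{Second differences.} For the $\beta$-weighted bound on $k^2\sum\norm{\dt^2\mm_h^{j+1}}^2$, we use
\[
\dt^2\mm_h^{j+1}\approx\tfrac23\dt\vv_h^{j+1}+\tfrac13\dt^2\mm_h^j .
\]
This follows from step~\eqref{item: def mmhj}: $\tfrac32\dt\mm_h^{j+1}-\tfrac12\dt\mm_h^j=\vv_h^{j+1}$, so $3\dt^2\mm_h^{j+1}-\dt^2\mm_h^j=2\dt\vv_h^{j+1}/k\cdot k$. A discrete Gronwall/geometric summation with ratio $1/3$ then bounds $\sum\norm{\dt^2\mm_h^{j+1}}^2$ by $\sum\norm{\dt\vv_h^{j+1}}^2$ plus initial terms, and the latter is controlled when $\beta>0$.

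\textbf{Constraint violation.} At each node $\zz$ we use the orthogonality $\vv_h^{j+1}(\zz)\cdot\mhat_h^{j+1}(\zz)=0$. Expanding
\[
|\mm_h^{j+1}|^2 \quad\text{with}\quad \mm_h^{j+1}=\tfrac43\mm_h^j-\tfrac13\mm_h^{j-1}+\tfrac23k\vv_h^{j+1}
\]
gives a recursion $|\mm^{j+1}|^2-1=$ (BDF2 combination of previous defects) $+$ remainder terms. These are $k^2|\vv|^2$ plus cross terms $k\,\vv^{j+1}\cdot(\mm^{j+1}-\mhat^{j+1})$, with $\mm^{j+1}-\mhat^{j+1}=\tfrac23k^2\dt^2\mm^{j+1}$-type quantities.

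With the identity \eqref{eq:G_inner_product} applied nodally, $|\mm|^2$ behaves like a $G$-norm telescoping sum. Summing, using the mass-lumping norm equivalence $\norm[L^1]{\II_h[|\phi_h|^2]}\simeq\norm{\phi_h}^2$ (this is where $\gamma$ enters), and bounding the cross terms by Young gives:
\begin{itemize}
\item a general bound $\lesssim k^2\sum\norm{\vv}^2+k^4\sum\norm{\dt^2\mm}^2\lesssim k$, using the inverse estimate or \eqref{eq:stability};
\item for $\beta>0$, with $k^2\sum\norm{\dt^2\mm}^2\lesssim1$, the bound $\lesssim k^2$;
\item under \eqref{eq: regularity condition} and $\vv_h^0=\0$, a sharper bound $k^3$, using $k^4\sum\norm{\dt^2\mm}^2\le k^3C_4$ and the first step $\mm_h^1=\mm_h^0+k\vv_h^1$, whose defect $k^2|\vv_h^1|^2=k^4|\dt\vv_h^1|^2\le k^3C_4$.
\end{itemize}

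\textbf{Main obstacle.} I expect the main obstacle to be organising the nodal recursion for $|\mm_h^{j}|^2-1$ so that the $k^2|\vv|^2$ terms cancel to the right order rather than only giving $O(k)$. This requires exact algebraic identities relating $\mm^{j+1}\cdot\vv^{j+1}$ with $\mhat$ and the $G$-norm, which I would try first to establish via \eqref{eq:G_inner_product}.
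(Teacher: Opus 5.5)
Your treatment of the energy identity (bounding the right-hand side of \eqref{eq:corollary_energy_law} by the initial energy, regrouping the unsigned terms into $G$-norms, and using \eqref{eq: G-norm-equivalence}) is exactly the paper's Step~1. Your ratio-$1/3$ recursion for $\dt^2\mm_h^{j+1}$ is the paper's Lemma~\ref{prop:dtt_stability}. For the $\beta$-weighted term in \eqref{eq:stability}, multiply $k^2\sum\norm{\dt^2\mm_h^{j+1}}^2\lesssim 1+\beta^{-1}$ by $\beta$, which keeps $C_1$ independent of $\beta$. There are, however, two genuine gaps.

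\textbf{The $\bL^2$ bound is not uniform in $n$.} Writing $\mm_h^n-\mm_h^0=k\sum_{j=0}^{n-1}\dt\mm_h^{j+1}$ and applying Cauchy--Schwarz gives only $\norm{\mm_h^n}\lesssim 1+\sqrt{t_n}$. The reason is that the energy law controls $k\sum_j\norm{\vv_h^{j+1}}^2$, not $k\sum_j\norm{\vv_h^{j+1}}$. This bound grows with $n$. The proposition, by contrast, asserts $C_1$ independent of $n$ and of any final time, and this uniformity is what later gives boundedness in $L^\infty(0,\infty;\bH^1(\Omega))$. The paper reverses the order. It first proves \eqref{eq:unit_length_constraint_bad}, which uses only the gradient and velocity bounds. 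It then uses elementwise convexity of $|\mm_h^n|^2$, which gives $|\mm_h^n|^2\le\II_h[|\mm_h^n|^2]$ pointwise and hence $\norm{\mm_h^n}^2\le|\Omega|+C_2k$.

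\textbf{The key nodal identity for the constraint is missing.} You flag this as the main obstacle, but the tool you suggest (Young's inequality on the cross terms) loses exactly the order you need. After using $\vv_h^{j+1}(\zz)\cdot\mhat_h^{j+1}(\zz)=0$, the cross term is $-\tfrac89 k\,\vv_h^{j+1}\cdot(\mm_h^j-\mm_h^{j-1})$. It is $O(k^2)$ per step, so bounding it separately only gives $O(k)$ in total. It must instead be combined into a perfect square, which gives at every node
\[
|\mm_h^{j+1}|^2=\tfrac43|\mm_h^j|^2-\tfrac13|\mm_h^{j-1}|^2+\tfrac49\big|\mm_h^j-\mm_h^{j-1}-k\vv_h^{j+1}\big|^2=\tfrac43|\mm_h^j|^2-\tfrac13|\mm_h^{j-1}|^2+k^4|\dt^2\mm_h^{j+1}|^2 .
\]
This is Lemma~\ref{lem: well-posedness discrete tangent space}(a). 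Telescope from $j=1$ to $n-1$, and use $|\mm_h^0(\zz)|=1$, $|\mm_h^1(\zz)|^2=1+k^2|\vv_h^1(\zz)|^2$, and the monotonicity $|\mm_h^{n-1}(\zz)|\le|\mm_h^n(\zz)|$ (which follows from the same identity by induction). This yields
\[
0\le|\mm_h^n(\zz)|^2-1\le\tfrac32\Big(k^2|\vv_h^1(\zz)|^2+k^4\sum_{j=1}^{n-1}|\dt^2\mm_h^{j+1}(\zz)|^2\Big).
\]
Mass lumping then gives $\norm[L^1(\Omega)]{\II_h[|\mm_h^n|^2]-1}\lesssim k^2\norm{\vv_h^1}^2+k^4\sum_{j=1}^{n-1}\norm{\dt^2\mm_h^{j+1}}^2$.

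The velocity enters only through the first step. Your ``general bound'' containing $k^2\sum_j\norm{\vv_h^{j+1}}^2$ is only $O(k)$, and it cannot give the $k^2$ rate for $\beta>0$ however well $k^2\sum\norm{\dt^2\mm_h^{j+1}}^2$ is controlled. With the correct bound, your three bullets go through as in the paper:
\begin{itemize}
\item the inverse estimate in time gives the $O(k)$ bound;
\item the $\beta$-weighted bound, with constant $\sim 1+\beta^{-1}$, gives $O(k^2)$;
\item when $\vv_h^0\equiv\0$, the identity $k^2\norm{\vv_h^1}^2=k^4\norm{\dt\vv_h^1}^2$ gives $O(k^3)$ under \eqref{eq: regularity condition}.
\end{itemize}
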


\begin{remark}
Owing to stability~\eqref{eq:stability},
combined with the enhanced time regularity of the iLLG equation (compared to the one of the classical LLG equation),
we see that
the second-order unit length constraint violation is \emph{unconditional} if $\beta>0$.
    This contrasts with the methods proposed in~\cite{akrivis2025quadratic,afp2026_partI,akrivis2025midpoint}, where a discrete time regularity assumption 
    is already required to prove a second-order bound for the constraint violation. 
    In this respect, the present scheme is closer in spirit to the BDF2 method of~\cite{dong2024bdf},
    where a discrete regularity condition is only invoked to derive the improved third-order estimate.
    We emphasize, however, that also in our case the discrete regularity assumption~\eqref{eq: regularity condition}, 
    which is needed to obtain the third-order bound~\eqref{eq:unit_length_constraint_third}, 
    is not guaranteed by the scheme and has not been established analytically. 
    Nevertheless, since it depends solely on quantities computed during the iteration, 
    it can be evaluated and verified a posteriori numerically; see, e.g., the experiment of Section~\ref{sec:unitlengthconstraint}.
\end{remark}
\begin{remark}
    Proposition~\ref{prop:stability} shows that the scheme 
    is unconditionally stable for all $\beta \in [0,1]$, including the full BDF2 case $\beta = 0$. 
    In particular, $\beta\in(0,1]$ provides sufficient regularity and structure by~\eqref{eq:stability} 
    to carry out the full analysis, allowing one to establish not only stability but also
    convergence results and the unconditional second-order estimate~\eqref{eq:unit_length_constraint}, whereas for $\beta=0$ we can currently prove only the
    first-order estimate~\eqref{eq:unit_length_constraint_bad} unconditionally.
    The third-order estimate~\eqref{eq:unit_length_constraint_third} holds for all $\beta\in[0,1]$, but under the discrete regularity condition~\eqref{eq: regularity condition}.
    The numerical experiments show that the violation of the unit length
    constraint is of third order also for $\beta=0$.
    Moreover, for $\beta = 0$ we empirically observe second-order convergence in time of the error in the $\bH^1$-norm (see Section~\ref{sec:orderofconvergence}), although convergence for $\beta=0$ is not covered by our analysis.
\end{remark}

Using the approximations $\{\mm_h^j\}_{j \ge 0}$, $\{\widehat{\mm}_h^{j}\}_{j\ge 1},$ and $\{\vv_h^j\}_{j \ge 0}$ obtained with Algorithm~\ref{algorithm},
we define approximations $\mm_{hk}$, $\mm_{hk}^{\pm}$, $\widehat{\mm}_{hk}^+$, $\vv_{hk}^\pm$ of $\mm$ and $\vv=\partial_t\mm$ by \eqref{eq:reconstructions} and by
\begin{align}
\widehat{\mm}_{hk}^+(t) & \coloneqq
	\begin{cases}
		\mm_h^0 & \text{for } \, t\in[t_j,t_{j+1}) \quad\text{ with }\quad j=0,\\
		\widehat{\mm}_h^{j+1} & \text{for } \, t\in[t_j,t_{j+1}) \quad\text{ with }\quad j \ge 1.
	\end{cases} \label{eq:mhk-hat-plus}
\end{align}
We conclude the section by stating the following theorem,
which shows that the time reconstructions converge weakly toward a weak solution of~\eqref{eq: iLLG system} as $(h,k)\to(0,0)$.

\begin{theorem}[convergence]\label{thm: main result}
Let $\mathcal{T}_h$ be a family of $\gamma$-quasi-uniform meshes of $\Omega$.
Suppose that assumptions {\eqref{enum: A1}}--{\eqref{enum: A2}} are satisfied and suppose $\beta\in (0,1]$.
Then, the sequences of discrete functions $\mm_{hk}$, $\mm_{hk}^\pm$, and $\widehat{\mm}_{hk}^+$
admit (not relabeled) subsequences that unconditionally converge to a function
\(\mm\in L^\infty(0,\infty;\bH^1(\Omega))\cap W^{1,\infty}(0,\infty;\bL^2(\Omega))\),
which is a weak solution of iLLG~\eqref{eq: iLLG system} according to Definition~\ref{def: weak solution}\eqref{def: weak solution i}--\eqref{def: weak solution iv}.
More precisely, it holds that $\mm_{hk}\stackrel{*}{\rightharpoonup}\mm$ weakly-star in~$W^{1,\infty}(0,\infty;\bL^2(\Omega))$
and $\mm_{hk},\mm_{hk}^\pm,\widehat{\mm}_{hk}^+\stackrel{*}{\rightharpoonup}\mm$ in $L^\infty(0,\infty;\bH^1(\Omega))$ as $(h,k)\to(0,0)$,
where all convergences hold for the same subsequence.
\end{theorem}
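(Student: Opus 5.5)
We follow the standard compactness-and-identification route of~\cite{alouges2008a,ruggeri2022numerical,afp2026_partI}, organized in four steps. Steps 1--3 are essentially routine given Proposition~\ref{prop:stability}; Step 4 is where the real difficulty lies.

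\textbf{Step 1: uniform bounds and extraction.} Since $\beta\in(0,1]$, the stability estimate~\eqref{eq:stability} of Proposition~\ref{prop:stability} gives, uniformly in $h,k$ and $T$:
\begin{itemize}
\item $\mm_{hk},\mm_{hk}^\pm,\widehat{\mm}_{hk}^+$ are bounded in $L^\infty(0,\infty;\bH^1(\Omega))$;
\item $\vv_{hk}^\pm$ are bounded in $L^\infty(0,\infty;\bL^2(\Omega))$;
\item $k^2\sum_j\norm{\dt\vv_h^{j+1}}^2$ and $k^2\sum_j\norm{\dt^2\mm_h^{j+1}}^2$ are bounded.
\end{itemize}
We need $\partial_t\mm_{hk}$ bounded in $L^\infty(\bL^2)$. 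For $t\in(t_j,t_{j+1})$ we have $\partial_t\mm_{hk}=\dt\mm_h^{j+1}$. The update in Algorithm~\ref{algorithm}(\ref{item: next steps})(\ref{item: def mmhj}) gives $\dt\mm_h^{j+1}=\vv_h^{j+1}-\frac{k}{3}\dt^2\mm_h^{j+1}$. Writing $\dt\mm_h^{j+1}$ in terms of $\vv_h$ via the geometric recursion $\dt\mm_h^{j+1}=\frac23\vv_h^{j+1}+\frac13\dt\mm_h^{j}$ expresses it as a convex combination of $\vv_h^i$, hence it is bounded.

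A diagonal argument over $T\in\N$ then yields a single subsequence with $\mm_{hk}\stackrel{*}{\rightharpoonup}\mm$ in $W^{1,\infty}(0,\infty;\bL^2(\Omega))\cap L^\infty(0,\infty;\bH^1(\Omega))$ and $\mm_{hk}\to\mm$ strongly in $\bL^2(\OT)$ by Aubin--Lions. The differences satisfy
\begin{itemize}
\item $\norm{\mm_{hk}-\mm_{hk}^\pm}_{\bL^2(\OT)}\lesssim k$;
\item $\norm{\widehat{\mm}_{hk}^+-\mm_{hk}^-}\lesssim k$;
\item $\norm{\vv_{hk}^+-\vv_{hk}^-}_{\bL^2(\OT)}\lesssim k\,(k\sum\norm{\dt\vv_h}^2)^{1/2}\lesssim k^{1/2}$.
\end{itemize}
Consequently all reconstructions share the same limit, and $\vv_{hk}^\pm\rightharpoonup\mmt$.

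\textbf{Step 2: unit length and initial data.} The constraint bound~\eqref{eq:unit_length_constraint} gives $\II_h[|\mm_h^n|^2]\to1$ in $L^1$. Combined with the standard estimate $\norm[L^1]{\II_h[|\pphi_h|^2]-|\pphi_h|^2}\lesssim h\norm{\pphi_h}\norm{\Grad\pphi_h}$ and strong convergence, this yields $|\mm|=1$ a.e. For (\ref{def: weak solution ii}), the weak continuity from the bounds plus \eqref{enum: A1}--\eqref{enum: A2} give weak convergence at $t=0$. Strong convergence then follows from the energy inequality, which is established in Step 4 and gives $\limsup_{t\to0}\mathcal{J}[\mm(t),\mmt(t)]\le\mathcal{J}[\mm^0,\vv^0]$.

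\textbf{Step 3: variational identity.} Given $\vvarphi\in C_c^\infty([0,T);\bH^1)$, test~\eqref{alg1:magnetization_update} with $\pphi_h=\II_h[\widehat{\mm}_h^{j+1}\times\vvarphi(t_j)]\in\bT_h(\widehat{\mm}_h^{j+1})$, multiply by $k$, and sum over $j$.
\begin{itemize}
\item The inertial terms, a $\beta$-weighted Euler part plus a $(1-\beta)$-weighted BDF2 part, are handled by discrete summation by parts. This moves the difference quotient onto $\widehat{\mm}_h\times\vvarphi$ and produces $-\tau\int\langle\mm\times\mmt,\partial_t\vvarphi\rangle$ together with the boundary term $-\tau\langle\mm^0\times\vv^0,\vvarphi(0)\rangle$ from~\eqref{enum: A2}.
\item The terms in which the derivative falls on $\widehat{\mm}_h$ involve $\dt\widehat{\mm}_h\times\vv_h$. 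These are controlled through $\vv_h^{j}\times\vv_h^{j}=0$ plus remainders bounded by $k\sum\norm{\dt^2\mm_h}\norm{\vv_h}$ and by $\norm{\vv^+-\vv^-}$, both of which vanish by the bounds of Step 1.
\item The BDF2 extra terms $-\frac12(\vv^j-\vv^{j-1})$ are of the form $k\dt\vv$, and their contribution vanishes by $k^2\sum\norm{\dt\vv}^2\lesssim1$.
\item The term $\frac23k\langle\Grad\vv_h,\Grad\pphi_h\rangle$ is $O(k^{1/2})$ by $k^4\sum\norm{\dt^2\Grad\mm}^2$ plus inverse estimates. Quasi-uniformity handles the interpolation error $\II_h-\mathrm{id}$ in the usual way.
\item The exchange term converges by weak--strong pairing.
\end{itemize}

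\textbf{Step 4: energy inequality (main obstacle).} Summing~\eqref{eq:discrete_energy_equality} as in~\eqref{eq:corollary_energy_law} gives an identity with unsigned perturbations $\frac{k}{4}\dt\norm{\Grad\mm_h^n}^2$, $(1-\beta)\frac{\tau k}{4}\dt\norm{\vv_h^n}^2$ and their counterparts at $j=1$. Naively, these are of size $\norm{\Grad\vv_h^1}$, which is only bounded by $k^{-1/2}$, so a CFL condition would seem necessary. Following~\cite{afp2026_PartIII}, we will avoid pointwise control by averaging~\eqref{eq:corollary_energy_law} in $n$ over a window $[T,T+\varepsilon]$. The bad terms telescope: $\frac{k}{4}\sum_n k\,\dt\norm{\Grad\mm_h^n}^2=\frac k4(\norm{\Grad\mm_h^{n_2}}^2-\norm{\Grad\mm_h^{n_1}}^2)=O(k)$, and likewise for the $\vv$ terms. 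The first-step terms are handled the same way: $\frac{k}{4}\dt\norm{\Grad\mm_h^1}^2=\frac14(\norm{\Grad\mm_h^1}^2-\norm{\Grad\mm_h^0}^2)$, and by~\eqref{eq:initial_energy_law} $\norm{\Grad\mm_h^1}^2\le\norm{\Grad\mm_h^0}^2+O(1)\cdot$ a quantity that tends to $0$. We plan to show this tends to $0$ using $\mm_h^1-\mm_h^0=k\vv_h^1$, the bound $k^2\norm{\Grad\vv_h^1}^2\lesssim1$, and the strong convergence of $\mm_h^0$, possibly after also averaging the starting index. Weak lower semicontinuity, applied to $\frac1\varepsilon\int_T^{T+\varepsilon}\mathcal{J}$, then gives~\eqref{eq:weak_energy_law} for a.e. $T$. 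Letting $\varepsilon\to0$ and using weak continuity extends it to every $T$. Controlling the first-step perturbation $\norm{\Grad\mm_h^1}^2-\norm{\Grad\mm_h^0}^2\to0$ without a CFL condition is the delicate point, and it is where we expect most of the work.
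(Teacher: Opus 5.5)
\textbf{There is a genuine gap in Step~4, at the point you flag yourself, and your sketched route to it does not close.} With only $\mm_h^1-\mm_h^0=k\vv_h^1$, $k\norm{\Grad\vv_h^1}\lesssim1$ and $\mm_h^0\to\mm^0$, the quantity $\norm{\Grad\mm_h^1}^2-\norm{\Grad\mm_h^0}^2=2k\inner{\Grad\mm_h^0}{\Grad\vv_h^1}+k^2\norm{\Grad\vv_h^1}^2$ is merely $O(1)$. By \eqref{eq:initial_energy_law} it equals $\tau(\norm{\vv_h^0}^2-\norm{\vv_h^1}^2)-\tau\norm{\vv_h^1-\vv_h^0}^2-2\alpha k\norm{\vv_h^1}^2-k^2\norm{\Grad\vv_h^1}^2$. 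The available bounds do not make $\norm{\vv_h^1-\vv_h^0}$ or $k\norm{\Grad\vv_h^1}$ small without a CFL condition. Averaging over the starting index does not help, since every later discrete energy is again controlled only through $\mathcal{J}[\mm_h^0,\vv_h^0]+\varrho_1$. You also leave the companion term $(1-\beta)\frac{\tau}{4}(\norm{\vv_h^1}^2-\norm{\vv_h^0}^2)$ untreated.

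You need only a one-sided bound. Inserting \eqref{eq:initial_energy_law} into $\varrho_1=\frac{k}{4}\dt\norm{\Grad\mm_h^1}^2+(1-\beta)\frac{\tau k}{4}\dt\norm{\vv_h^1}^2$ gives $\varrho_1\le\frac{\beta\tau}{4}(\norm{\vv_h^0}^2-\norm{\vv_h^1}^2)$, so it suffices to show $\liminf\norm{\vv_h^1}^2\ge\norm{\vv^0}^2$. The missing idea is to get this from the first-step equation \eqref{alg1:magnetization_update_first} tested with $\zzeta_h^0=\boldsymbol{\II}_h[\mm_h^0\times\vvarphi(0)]$. It gives $\tau\inner{\vv_h^1-\vv_h^0}{\zzeta_h^0}=O(k)$, so every weak limit $\overline{\vv}$ of $\vv_h^1$ satisfies $\mm^0\times(\overline{\vv}-\vv^0)=\0$. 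Hence $\overline{\vv}-\vv^0$ is parallel to $\mm^0$, thus orthogonal to $\vv^0$, and $\norm{\overline{\vv}}\ge\norm{\vv^0}$. By contrast, your window-averaging of the $n$-dependent unsigned terms is a legitimate alternative to the paper's absorption of them into the nonnegative $G$-norm energy $\mathcal{J}_{G,h}^n$.

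\textbf{Step~3 also contains steps that fail as stated.}

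\emph{Stabilization term.} You cannot split off $\frac{2}{3}k\inner{\Grad\vv_h^{j+1}}{\Grad\pphi_h}$.
\begin{itemize}
\item The energy law gives no unconditional bound on $k^2\sum_j\norm{\Grad\vv_h^{j+1}}^2$, and inverse estimates introduce $h^{-1}$, i.e., a CFL condition.
\item The remaining term $-\frac13\inner{\Grad(4\mm_h^j-\mm_h^{j-1})}{\Grad\pphi_h}$ does not converge by weak--strong pairing either. The test gradient $\Grad\boldsymbol{\II}_h[\mhat_h^{j+1}\times\vvarphi(t_j)]$ converges only weakly, and the mismatch $\frac13(4\mm_h^j-\mm_h^{j-1})-\mhat_h^{j+1}=-\frac{2}{3}k\dt\mm_h^j$ is not small in $\bH^1(\Omega)$.
\item Instead, recombine exactly: $\frac13(4\mm_h^j-\mm_h^{j-1})+\frac{2}{3}k\vv_h^{j+1}=\mm_h^{j+1}$. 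Then use $\mm_h^{j+1}-\mhat_h^{j+1}=k^2\dt^2\mm_h^{j+1}$ together with the bound on $k^4\sum_j\norm{\dt^2\Grad\mm_h^{j+1}}^2$.
\end{itemize}

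\emph{BDF2 remainder.} A single first difference $-\frac12(\vv_h^j-\vv_h^{j-1})=-\frac{k}{2}\dt\vv_h^j$ does not vanish when summed against an $O(1)$ test function. Cauchy--Schwarz with $k^2\sum_j\norm{\dt\vv_h^j}^2\lesssim1$ only gives $O(k^{-1/2})$. Write $\frac32\vv_h^{j+1}-2\vv_h^j+\frac12\vv_h^{j-1}=k\dt\vv_h^{j+1}+\frac{k}{2}(\dt\vv_h^{j+1}-\dt\vv_h^j)$ and sum the second difference by parts. The resulting boundary term $\frac{(1-\beta)\tau k}{2}\inner{\dt\vv_h^1}{\zzeta_h^0}$ is $O(k)$ only via the first-step equation.

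\textbf{Time-continuity of $\mmt$ (Steps~2 and~4).} The a~priori bounds give no time-continuity of $\mmt$. So neither $\mmt(t)\to\vv^0$ in Definition~\ref{def: weak solution}\eqref{def: weak solution ii} nor the extension of the energy inequality to every $T$ follows from ``weak continuity from the bounds''. You must:
\begin{itemize}
\item use the limit equation to get $\mm\times\mmt\in W^{1,\infty}(0,T;\bH^1(\Omega)^*)$ and apply Lemma~\ref{lem:temam};
\item pass to $\mmt=-\mm\times(\mm\times\mmt)$;
\item identify $(\mm\times\mmt)(0)=\mm^0\times\vv^0$ from the boundary term in \eqref{eq:weak_solution var_formulation}.
\end{itemize}
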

Note that the existence of a weak-star limit satisfying the variational formulation of problem~\eqref{eq: iLLG system} and the energy inequality from Definition~\ref{def: weak solution}\eqref{def: weak solution iv} holds unconditionally, and, as in \cite{afp2026_PartIII}, no CFL-type condition is needed.
\section{Numerical experiments}\label{sec:numerics}

To demonstrate the improvements of Algorithm~\ref{algorithm}
over the first-order tangent plane
scheme introduced in~\cite{ruggeri2022numerical},
we present several numerical experiments.
For a fairer comparison,
we implement a modified version of the tangent
plane scheme of~\cite{ruggeri2022numerical},
where we remove the nodal projection step, as in the projection-free tangent 
plane scheme from~\cite{abert_spin-polarized_2014}.
This scheme practically coincides with repeating the step for $j=0$ for all time steps, and 
we refer to it as pfTPS1.
For comparison, we also plot the results obtained using the algorithm from~\cite{afp2026_partI} for the standard LLG equation. Note that this algorithm is naturally recovered from our inertial formulation (Algorithm~\ref{algorithm}) by setting $\tau=0$.
The 3D finite element scheme is implemented in NGSolve/Netgen~\cite{netgen},
and the orthogonality constraint of the discrete tangent space is implemented with
the nullspace method of~\cite{ramage2013preconditioned}.
To solve the linear system resulting from the finite element method
with the nullspace included, we apply the GMRES method from the SciPy
library, along with a sparse incomplete LU preconditioner, of drop tolerance \(10^{-4}\)
and fill factor of \(10\).
The tolerance for GMRES was set to \(10^{-15}\),
with convergence typically taking 4 iterations independently of
the experiment and \(\beta\).

The 2D experiment of Section~\ref{sec:blowup} is implemented in MATLAB.

\subsection{Unit length constraint}\label{sec:unitlengthconstraint}
As a first experiment, we investigate the order of convergence
in the unit length constraint for different values of \(\beta\in\{0,0.4,0.6,1\}\). 
Notice that \(\beta = 0\) is not covered by the convergence analysis nor by the second-order
unit length constraint~\eqref{eq:unit_length_constraint}, but only by the stability analysis, the first-order bound~\eqref{eq:unit_length_constraint_bad}, and, under the discrete regularity condition~\eqref{eq: regularity condition}, the third-order bound~\eqref{eq:unit_length_constraint_third}.
We choose \(\Omega = (0,1)^3\),
where the body is discretised with a mesh with
mesh size \(h_{\max} \approx 0.41\), consisting of 460 elements and 142 nodes.
We supplement the energy~\eqref{eq: exchange energy} with a Zeeman
external field \(\ff = (1,0,0)\), i.e., considering an extra term $-\int_\Omega \ff\cdot  \mm\,\mathrm{d}x$ in 
the energy definition.
We vary the time-step for \(n=0,\dots,10\) as \(k_n = 10^{-2}\cdot 2^{-n}\).
For the initial conditions, we choose $\vv_h^0 \equiv \0$ and
\begin{equation}\label{eq: high_energy_initial_condition}
    	\mm_h^0 \coloneq \boldsymbol{\II}_h[\mm^0] \, \text{ with } \, \mm^0(x,y,z) = \frac{2}{\sqrt{5}}\Big(\frac{1}{2}, \sin\big(4(x+y+z)\big), \cos\big(4(x+y+z)\big)\Big),
\end{equation}
yielding an initial total energy of \(\mathcal{J}[\mm_h^0,\vv_h^0]\approx 16\).
We fix \(\alpha=0.1\) and \(\tau = 0.1\).
The total time for this simulation is \(T = 1\).
To measure the unit length constraint error, we use
\begin{align}
	\text{err}_{L^1} &\coloneqq \norm[L^1(\Omega)]{\II_h[|\mm_h^N|^2] - 1},\label{eq: errL1}\\
	\text{err}_{\bL^\infty} &\coloneqq \norm[\bL^\infty(\Omega)]{\mm_h^N} - 1.\label{eq: errLinfty}
\end{align}
We also collect the auxiliary quantities related to the third-order
unit length constraint violation as in Proposition~\ref{prop:stability},
\begin{align}\label{eq:AB bounded quantities}
    A \coloneqq k \norm{\dt\vv_h^1}^2,\quad
    B \coloneqq k \sum_{i=1}^{N-1}\norm{\dt^2 \mm_h^{i+1}}^2.
\end{align}
We recall that, as \(\vv_h^0 \equiv \0\), if \(A\) and \(B\) are uniformly bounded,
then the unit length constraint violation is of third order in time,
while for pfTPS1, the unit length constraint
violation is always of first order in time.

\begin{figure}[ht]
\centering
\begin{subfigure}{0.49\textwidth}
    \includegraphics[width=\textwidth]{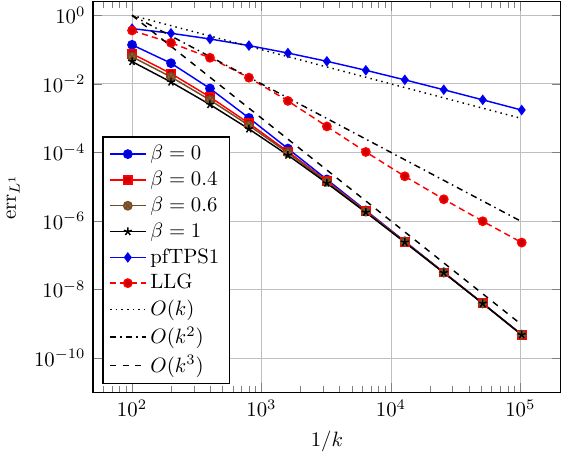}
    \caption{$\text{err}_{L^1}$ from \eqref{eq: errL1}.\label{subfig: unitlengthL1}}
\end{subfigure}
\hfill
\begin{subfigure}{0.49\textwidth}
    \includegraphics[width=\textwidth]{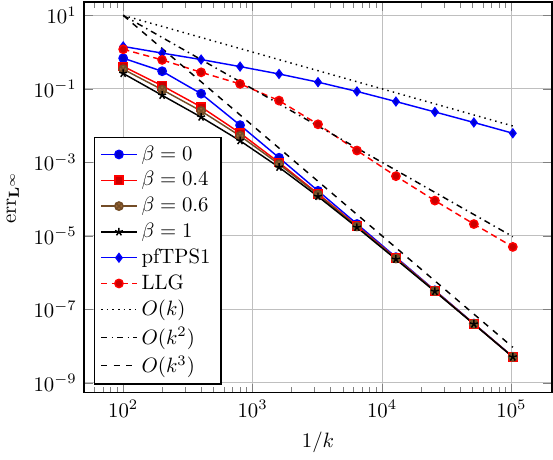}
    \caption{$\text{err}_{\bL^\infty}$ from \eqref{eq: errLinfty}.\label{subfig: unitlengthLinfty}}
\end{subfigure}
\caption{Experiment of Section~\ref{sec:unitlengthconstraint}.
Unit length constraint violation \(\text{err}_{L^1},\text{err}_{\bL^\infty}\)
vs \(1/k\), with varied \(\beta\).\label{fig:unitlengthconstraint}}
\end{figure}

\begin{figure}[ht]
\centering
\begin{subfigure}{0.49\textwidth}
    \includegraphics[width=\textwidth]{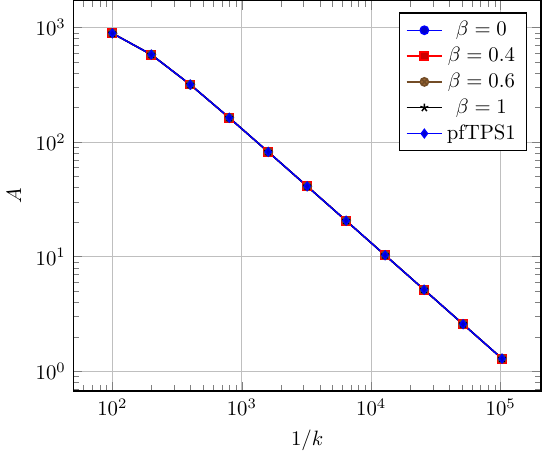}
    \caption{\(A\) from \eqref{eq:AB bounded quantities}.\label{subfig: unitlengthA}}
\end{subfigure}
\hfill
\begin{subfigure}{0.49\textwidth}
    \includegraphics[width=\textwidth]{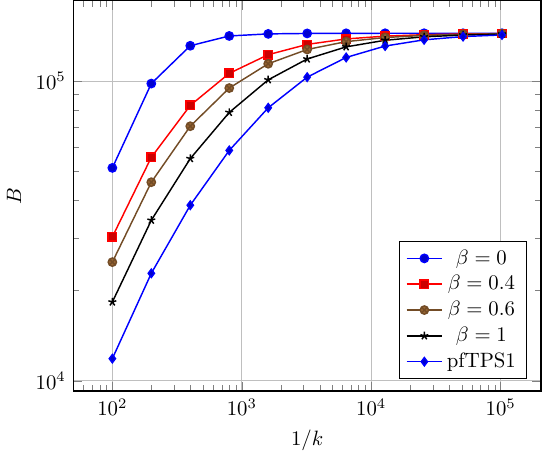}
    \caption{\(B\) from \eqref{eq:AB bounded quantities}.\label{subfig: unitlengthB}}
\end{subfigure}
\caption{Experiment of Section~\ref{sec:unitlengthconstraint}.
Quantities \(A, B\)
vs \(1/k\), with varied \(\beta\).\label{fig:unitlengthquantities}}
\end{figure}

The results are shown in Figure~\ref{fig:unitlengthconstraint}.
The second-order violation
in time as predicted by~\eqref{eq:unit_length_constraint}
is verified, even for \(\beta = 0\),
and we see that the third-order convergence
as predicted by~\eqref{eq:unit_length_constraint_third}
is attained for this example.
pfTPS1 performs significantly worse,
with only first-order constraint violation,
and would require a greatly reduced time-step size to perform similarly.
The LLG algorithm with \(\tau=0\) is of second order in time, but does not approach third-order
due to lacking the additional regularity of the iLLG equation.
If \(\beta\) is closer to \(1\), we see that the
unit length constraint is more precise,
which is because the second
derivative term 
\(k^2\sum_{j=1}^{N-1} \norm{\mathrm{d}_t^2 \mm_h^{j+1}}^2\)
responsible for the unconditional quadratic
consistency in~\eqref{eq:unit_length_constraint}
has a larger coefficient in the stability estimate~\eqref{eq:stability}.
In Figures~\ref{subfig: unitlengthA} and~\ref{subfig: unitlengthB}
we see that the quantities \(A\) and \(B\) are uniformly bounded.
Note that in Figure~\ref{subfig: unitlengthA}, all lines overlap exactly
because the initialization step is the same for all \(\beta\) and also for pfTPS1.
Figure~\ref{subfig: unitlengthB} shows convergence to an upper bound
of around \num{1.44E+05}, with \(\beta=0\) converging much faster than
for higher \(\beta\) values. We see that pfTPS1 also approaches the same limit,
but is much slower.

\subsection{Effects of the \texorpdfstring{\(\beta\)}{TEXT}-parameter}\label{sec:betaparameter}
We shall now demonstrate the effects of \(\beta\)
on the energy dissipation.
We apply the same mesh, and keep \(\tau=0.1\),
but consider the larger time-step \(k=10^{-2}\),
larger time interval \(T = 4\),
and a smaller Gilbert parameter \(\alpha = 10^{-2}\).
The constant initial conditions we consider are
\[
	\mm_h^0 \equiv \frac{10}{\sqrt{85}}(0.9,0.2,0)\quad \text{and} \quad \vv_h^0 \equiv \0.
\]
We supplement the energy with a Zeeman
external field \(\ff = (1,0,0)\).
We now plot the evolution in time of the
total energy \(\mathcal{J}[\mm_{hk}(t),\vv_{hk}^-(t)]\)
in Figure~\ref{fig:betatotalenergy},
and the average magnetization
in the \(x\)-direction, i.e.,
$\langle \mm_x(t) \rangle \coloneq 1/|\Omega|\int_\Omega\mm_{hk}(t)\cdot \ee_1\,\mathrm{d}\xx$
in Figure~\ref{fig:betaxavg}.
In Figure~\ref{fig:betatotalenergy}, the energy evolution is similar, but for larger \(\beta\)
we see increased dissipation. This is even more dramatic for pfTPS1,
with a different characteristic energy evolution,
being fully monotone decreasing. The energy evolution generated by Algorithm~\ref{algorithm}
is not generally monotone decreasing, especially early in the evolution for \(\beta=1\).
The effect of the additional dissipation on
the magnetization is clear in Figure~\ref{fig:betaxavg},
with pfTPS1 and \(\beta\) close to 1 yielding quickly decreasing amplitudes,
with pfTPS1 suffering the most.

\begin{figure}[ht]
	\centering
	\includegraphics[width=0.85\textwidth]{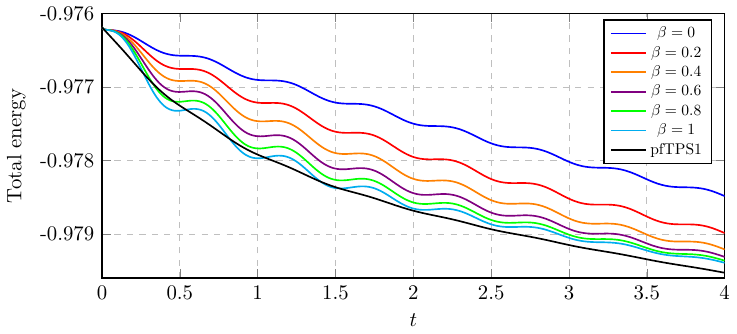} 
	\caption{Experiment of Section~\ref{sec:betaparameter}.
    Total energy \(\mathcal{J}[\mm_{hk}(t),\vv_{hk}^-(t)]\).\label{fig:betatotalenergy}}
\end{figure}

\begin{figure}[ht]
	\centering
	\includegraphics[width=0.85\textwidth]{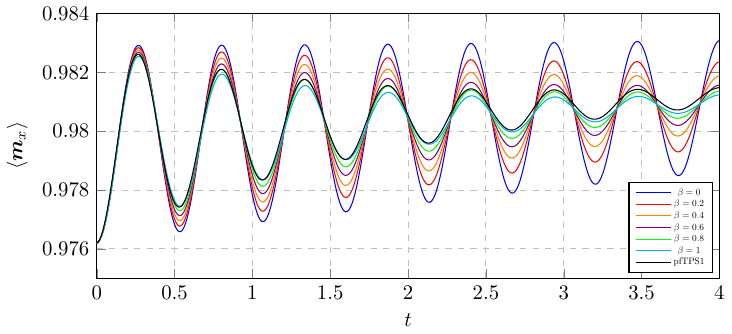} 
	\caption{Experiment of Section~\ref{sec:betaparameter}. Time-step of \(k = \num{1e-2}\).
	Average magnetization in \(x\)-direction, \(\langle \mm_x\rangle\).\label{fig:betaxavg}}
\end{figure}

\subsection{Convergence order in time}\label{sec:orderofconvergence}
To estimate the order of convergence, we
consider two experiments on a 3D triangulation of \(\Omega=(0,1)^3\). 
The mesh satisfies \(h_{\max}\approx 0.41\).
We set \(T=1/2\), \(\alpha=\tau=0.1\),
and consider the following two initial conditions.
We consider the constant initial condition
\[
	\mm_h^0 \equiv \frac{10}{\sqrt{85}}(0.9,0.2,0)\quad \text{and} \quad \vv_h^0 \equiv \0,
\] and the high energy initial condition~\eqref{eq: high_energy_initial_condition}.
We supplement the energy with a Zeeman external field \(\ff=(1,0,0)\) in both cases.
We consider time-step sizes \(k_n = \num{5e-3}\cdot 2^{-n}\) for \(n = 0,\dots,6\),
and compare this to a reference solution 
(computed for each value of \(\beta\))
corresponding to \(n=8\).
The error is computed with the \(\bH^1\)-norm
at the final time-step \(t=T\),
that is
\[
	\text{err}_{\bH^1}^n = \norm[\bH^1(\Omega)]{\mm_{h,k_n}(T) - \mm_{h,k_8}(T)}.
\]
In Figure~\ref{fig:orderofconvergence_3d}, we show the results with
both initial conditions and different values of \(\beta\).

We notice that $\beta = 0$ leads to second-order convergence, which is 
kept for very small values of $\beta$. As $\beta$ increases, the order 
of convergence decreases to first order. However, the estimated order
of convergence depends upon both the initial condition
and the value of \(\beta\). Indeed, notice that the constant 
initial condition leads to second-order convergence until $\beta = 10^{-3}$, and it starts to
drop to first order from $\beta = 5\times 10^{-3}$ (see Figure~\ref{fig:convergence_constant_3d}), while 
the situation is slightly better
for the high energy initial condition, where
we obtain second-order convergence until $\beta = 10^{-2}$, and it starts to 
drop to first order
from $\beta = 5\times 10^{-2}$ 
(see Figure~\ref{fig:convergence_high_energy_3d}), i.e., 
there is one order of magnitude of difference between the two cases.
Therefore, for $\beta=0$, the scheme is formally of second order as $k\to 0$,
while for any \(\beta>0\), the scheme is formally of first order.
The first-order approach, pfTPS1, is clearly seen to be first order in time for both experiments.

\begin{figure}[ht]
\centering
\begin{subfigure}{0.57\textwidth}
    \includegraphics[width=\textwidth]{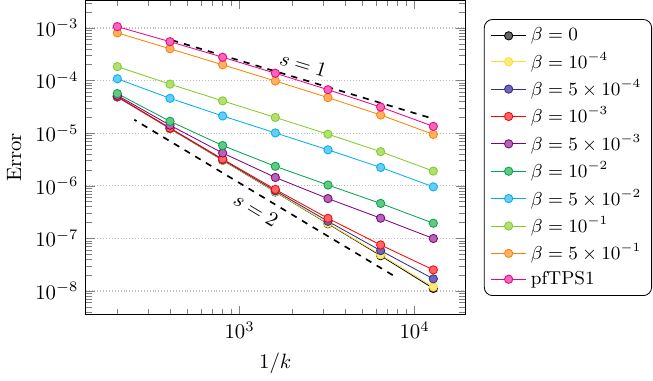}
    \caption{Constant initial condition}\label{fig:convergence_constant_3d}
\end{subfigure}
\begin{subfigure}{0.42\textwidth}
    \includegraphics[width=\textwidth]{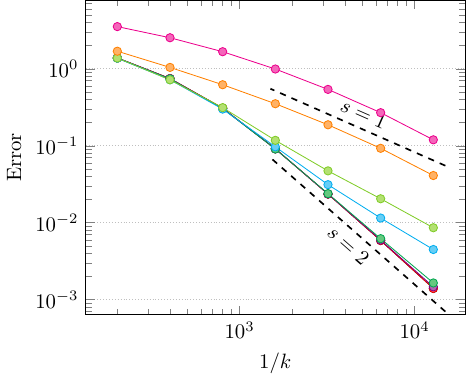}
    \caption{High energy initial condition}\label{fig:convergence_high_energy_3d}
\end{subfigure}
\caption{Experiment of Section~\ref{sec:orderofconvergence}.
\(\bH^1\)-norm of error \(\text{err}_{\bH^1}\)
vs. \(1/k\), with varied \(\beta\).
For small values of \(\beta\), we see second-order convergence,
but as \(\beta\) increases the order of convergence
drops to first order.}\label{fig:orderofconvergence_3d}
\end{figure}

To see the dependence of the order upon \(\beta\),
we instead plot the estimated order of convergence against \(\beta\),
using the two greatest values of \(1/k\) (i.e., $1/k=6400$ and $1/k=12800$) in Figure~\ref{fig:orderofconvergence_3d}.
The order of convergence is estimated using the formula
\[
    \text{Order of convergence} \approx -\log_2\left(\frac{\text{err}_{\bH^1}^6}{\text{err}_{\bH^1}^5}\right).
\]
The results are shown in Figure~\ref{fig:betaorderH1}, where we clearly have second-order convergence in time
for smaller values of \(\beta\).

\begin{figure}[ht]
    \centering
    \includegraphics{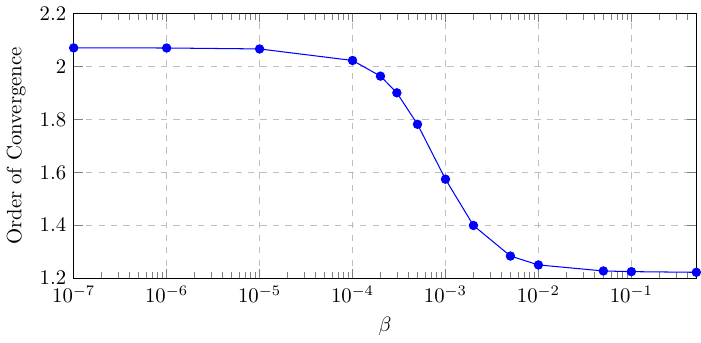}
    \caption{Experiment of Section~\ref{sec:orderofconvergence}.
    Estimated order of convergence against value of \(\beta\). We see that the estimated
    order of convergence approaches \(2\) as \(\beta\) becomes smaller.\label{fig:betaorderH1}}
\end{figure}

\subsection{Finite-time blow-up of weak solutions}\label{sec:blowup}
In this experiment we investigate the occurrence, for
a weak solution of iLLG, of the so-called finite-time
blow-up phenomenon, i.e., whether there exists a
time $T^*<\infty$ such that $\lim_{t\to T^*}\norm[\bL^\infty(\Omega)]{\Grad\mm(t)}=\infty$.
To this end, we consider the model problem
proposed in~\cite{ruggeri2022numerical},
which adapts to iLLG the model problem proposed in~\cite{bkp2008} for LLG.

We fix the 2D domain $\Omega = (-1/2,1/2)^2$, $\alpha = 1$, $\tau = 1$, and
final time $T=2$.
We consider homogeneous Neumann boundary conditions and,
given $\xx=(x_1,x_2) \in \Omega$, the initial data 
\[
    \mm^0(\xx)
    = \big(2 a(\xx) x_1, 2 a(\xx) x_2, a(\xx)^2 - |\xx|^2\big) / \big(a(\xx)^2 + |\xx|^2\big),
    \, \text{with}\, a(\xx) = \max\{0,(1-2|\xx|)\}^4,
\]
and $\vv^0 \equiv \0$.
We refer to~\cite{bkp2008} for the snapshots of
the evolution in time of this initial condition,
showing numerical approximations with large gradients.

Following~\cite{ruggeri2022numerical}, we fix $\ell\in\{4,5,6,7\}$ and
consider a uniform mesh $\mathcal{T}_{h_\ell}$
consisting of $2^{2 \ell+1}$
right-angled triangles, with
mesh size $h_{\ell} = \sqrt{2}\cdot 2^{-\ell}$.

\begin{figure}[ht!]
    \centering
\begin{subfigure}{0.49\textwidth}
    \includegraphics[width=\textwidth]{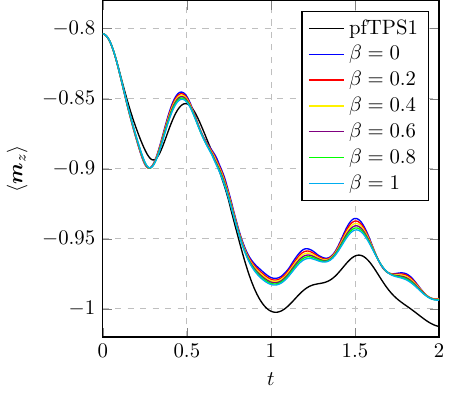}
    \caption{$k = h_5/10$}\label{subfig: ell5h10}
\end{subfigure}
\hfill
\begin{subfigure}{0.49\textwidth}
    \includegraphics[width=\textwidth]{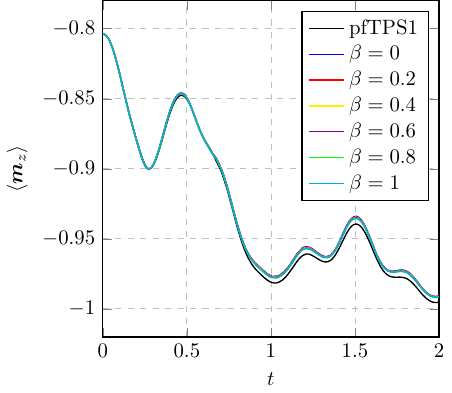}
    \caption{$k = h_5/100$}\label{subfig: ell5h100}
\end{subfigure}
\hfill
\vspace{0.5cm}
\begin{subfigure}{0.49\textwidth}
    \includegraphics[width=\textwidth]{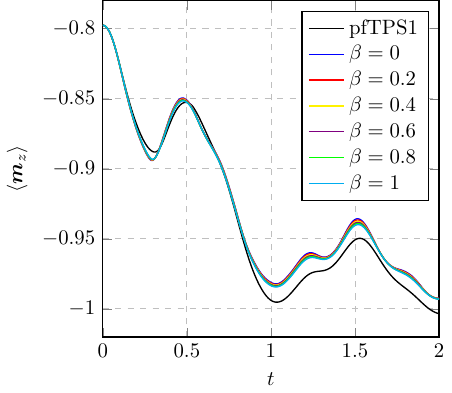}
    \caption{$k = h_6/10$}\label{subfig: ell6h10}
\end{subfigure}
\hfill
\begin{subfigure}{0.49\textwidth}
    \includegraphics[width=\textwidth]{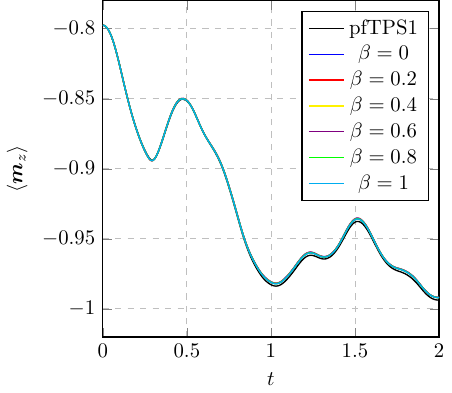}
    \caption{$k = h_6/100$}\label{subfig: ell6h100}
\end{subfigure}
	\caption{
        Experiment of Section~\ref{sec:blowup}.
        Evolution in time of the spatial average of
        the $z$-component of $\mm_{hk}$ for different time-step sizes.
    }\label{fig: z-evolution-blowup}
\end{figure}
In Figure~\ref{fig: z-evolution-blowup}, we fix $\ell\in\{5,6\}$, and a
time-step size of $k=h_\ell/10$
(resp. $k=h_\ell/100$) and plot the
evolution in time of the spatial average of
the $z$-component of $\mm$, i.e.,
$\langle \mm_z(t) \rangle \coloneq 1/|\Omega|\int_\Omega\mm_{hk}(t)\cdot \ee_3\,\mathrm{d}\xx$.
We see that all of the schemes capture the same evolution in time.
For $k=h_\ell/10$, the first-order pfTPS1 is more negative than 
the other plots (see Figure~\ref{subfig: ell5h10} and \ref{subfig: ell6h10}) 
due to the first-order unit length conservation,
and with a minimal variation
resulting from different values of $\beta$. Clearly the result improves for
the finer mesh size $h_6$.
On the other hand, choosing a smaller
time-step size $k=h_\ell/100$, we have a much more accurate result for both mesh sizes 
and a complete accordance of all methods for the finer mesh size $h_6$.
\begin{figure}[ht!]
    \centering
    \begin{subfigure}{0.47\textwidth}
        \centering
        \includegraphics[width=\linewidth]{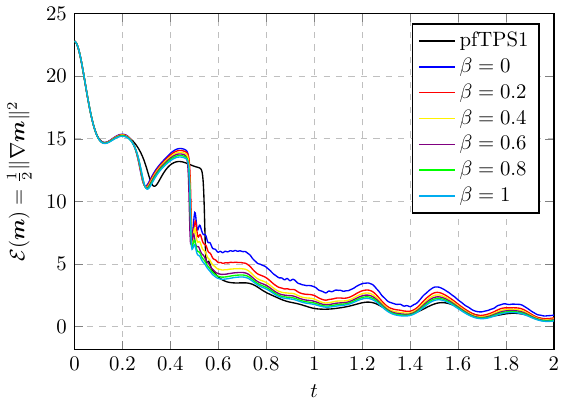}
        \vspace*{-8mm}
        \caption{Exchange energy, $k=h_6/10$}
    \end{subfigure}\hfill
    \begin{subfigure}{0.47\textwidth}
        \centering
        \includegraphics[width=\linewidth]{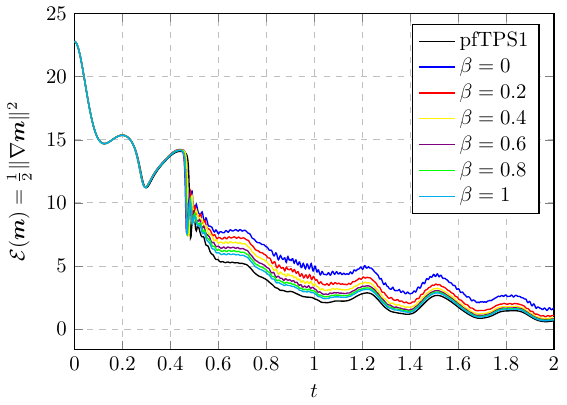}
        \vspace*{-8mm}
        \caption{Exchange energy, $k=h_6/100$}
    \end{subfigure}\hfill
    \vspace{0.35cm}
    \begin{subfigure}{0.47\textwidth}
        \centering
         \includegraphics[width=\linewidth]{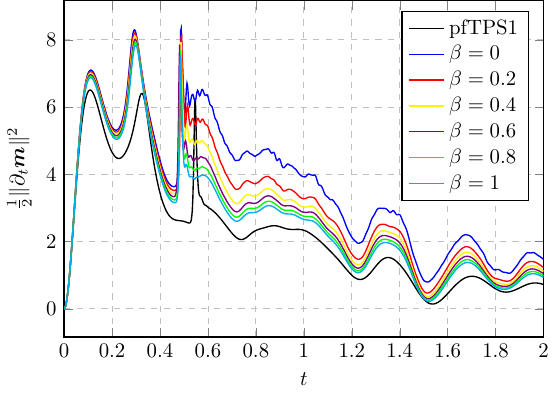}
         \vspace*{-8mm}
        \caption{Inertial energy, $k=h_6/10$}
    \end{subfigure}\hfill
    \begin{subfigure}{0.47\textwidth}
        \centering
         \includegraphics[width=\linewidth]{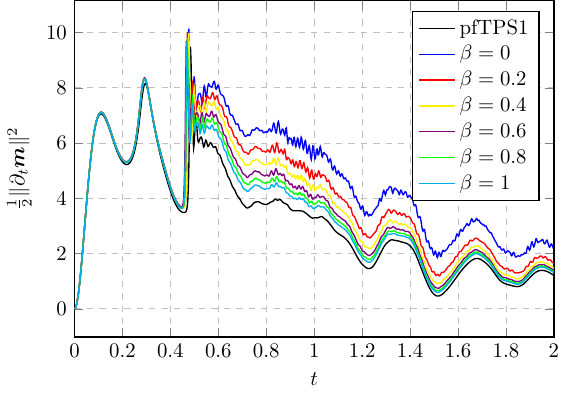}
         \vspace*{-8mm}
        \caption{Inertial energy, $k=h_6/100$}
    \end{subfigure}\hfill
    \vspace{0.35cm}
    \begin{subfigure}{0.47\textwidth}
        \centering
        \includegraphics[width=\linewidth]{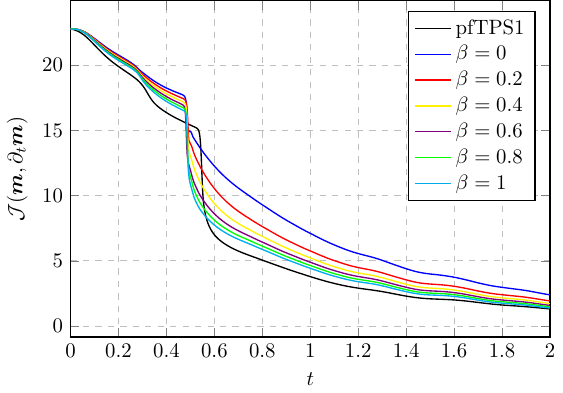}
        \vspace*{-8mm}
        \caption{Total $\mathcal{J}[\mm_{hk},\vv_{hk}^-]$, $k=h_6/10$}
    \end{subfigure}\hfill   
    \begin{subfigure}{0.47\textwidth}
        \centering
        \includegraphics[width=\linewidth]{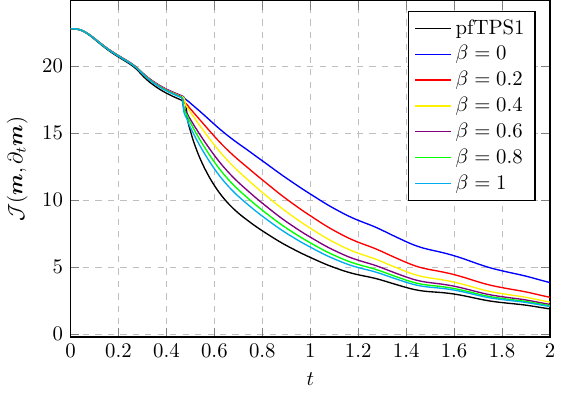}
        \vspace*{-8mm}
        \caption{Total $\mathcal{J}[\mm_{hk},\vv_{hk}^-]$, $k=h_6/100$}
    \end{subfigure}\hfill
    \vspace{0.35cm}
    \begin{subfigure}{0.33\textwidth}
        \centering
        \includegraphics[width=\linewidth]{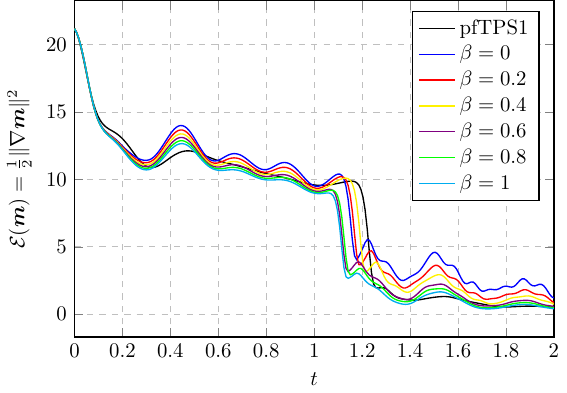}
        \caption{Exchange, $k=h_4/10$}
    \end{subfigure}\hfill
    \begin{subfigure}{0.33\textwidth}
        \centering
        \includegraphics[width=\linewidth]{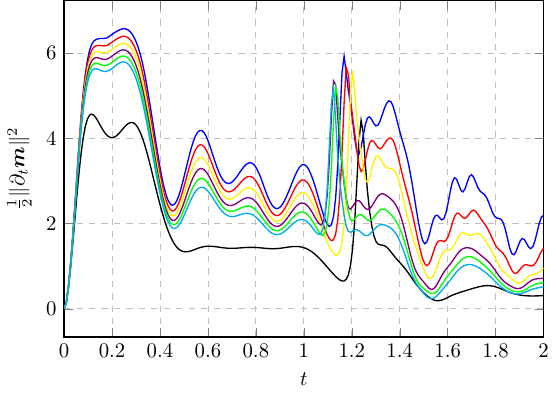}
        \caption{Inertial, $k=h_4/10$}
    \end{subfigure}\hfill
    \begin{subfigure}{0.33\textwidth}
        \centering
         \includegraphics[width=\linewidth]{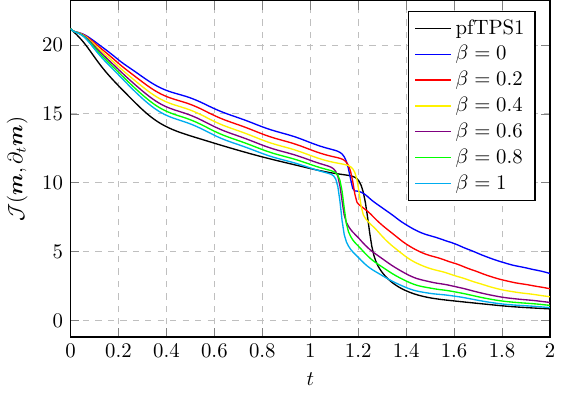}
        \caption{Total, $k=h_4/10$}
    \end{subfigure}\hfill
    \caption{Evolution in time of exchange,
    inertial and total energy.}\label{fig: energy-evolution-blowup}
\end{figure}

In Figure~\ref{fig: energy-evolution-blowup}, we
plot the evolution in time of the exchange
energy $\norm{\Grad \mm_{hk}(t)}^2/2$,
the inertial energy $\tau \norm{\vv_{hk}^-(t)}^2/2$, and
also the total energy $\mathcal{J}[\mm_{hk}(t),\vv_{hk}^-(t)]$,
for the fixed mesh size $h_6\approx0.0221$ and time-step sizes $k_1=h_6/10$ and $k_2=h_6/100$.
As in the previous experiment, we see
that values of $\beta$ close to $0$ are better
at conserving the inertial energy
than values of $\beta$ close to $1$. 
We also notice that a finer time-step size 
captures more frequent oscillations of the inertial and exchange energies.
In the last line, we plot the same energies for the coarser mesh size $h_4\approx0.0884$ 
and notice that the decay of the exchange and total energy happens later in time.

\begin{figure}[ht!]
    \centering
    \begin{subfigure}[t]{0.49\textwidth}
        \centering
        \includegraphics[width=\linewidth]{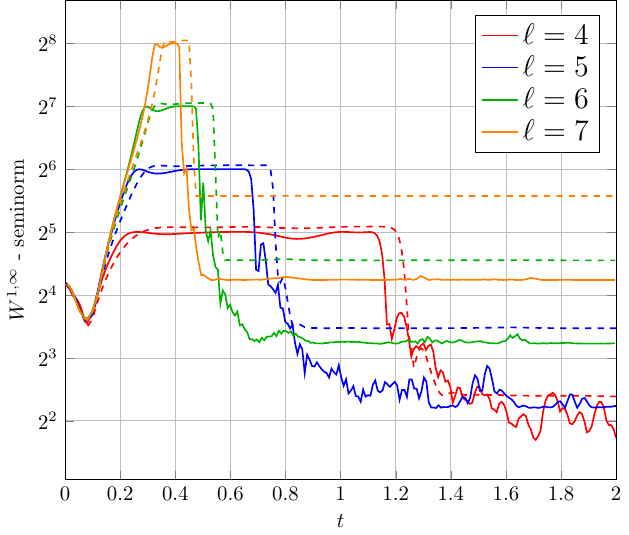}
        \caption{Evolution in time of $\norm[\bL^\infty(\Omega)]{\Grad\mm_{hk}(t)}$.
        pfTPS1 is represented with the dashed line
        and the $\beta$-scheme with $\beta=0$ (i.e., full BDF2 scheme)
        with the continuous line.\label{fig: grad-evolution-blowup}}
    \end{subfigure}\hfill
    \begin{subfigure}[t]{0.49\textwidth}
        \centering
        \includegraphics[width=\linewidth]{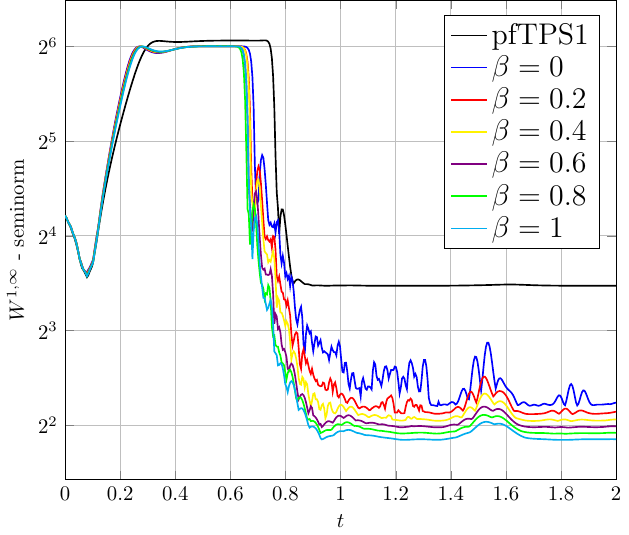}
        \caption{Evolution in time of $\norm[\bL^\infty(\Omega)]{\Grad\mm_{hk}(t)}$ 
        for different values of $\beta$ for $\ell = 5$.\label{fig: grad-evolution-blowup-beta}} 
    \end{subfigure}
    \caption{Blow up of the $\bW^{1,\infty}$-seminorm.}
\end{figure}

To conclude, Figure~\ref{fig: grad-evolution-blowup} plots
the evolution in time of
the $\bW^{1,\infty}$-seminorm $\norm[\bL^\infty(\Omega)]{\Grad\mm_{hk}(t)}$ for different mesh sizes 
$h_\ell=\sqrt{2}\cdot 2^{-\ell},$ $\ell\in\{4,5,6,7\}$ to
investigate the occurrence of a finite-time blow-up
and its dependence on the mesh size resolution.
In order to avoid overloading the plots, we compare pfTPS1 with the full
BDF2 method (i.e., we consider only $\beta=0$) for a
fixed time-step size $k = h_\ell/10$.
We see that at $t\approx 0.3$ the gradient norm reaches values
close to $2^{\ell+1}$, as observed in~\cite{ruggeri2022numerical}, where it was pointed out that 
\[
    \max_{\pphi_h\in\boldsymbol{\mathcal{M}}_{h,0}}\norm[\bL^\infty(\Omega)]{\Grad\pphi_h}
    = \max_{\pphi_h\in\boldsymbol{\mathcal{M}}_{h,0}}\max_{K\in \mathcal{T}_{h,\ell}}|\Grad\pphi_h|_K|
    = 2/2^{-\ell} = 2^{\ell+1}.
\]
This value can be exceeded here, since our schemes do not
enforce unit length at the nodes.
The value displayed above is obtained when the magnetization at the origin
$(0,0)$ points in an opposite direction with
respect to the magnetization at the closest nodes.
This configuration lasts for some
time (see the ``plateau'' in the plots), until the
magnetization at the origin flips and aligns
with the magnetization at the closest nodes,
thus decreasing the $\bW^{1,\infty}$-seminorm.
We notice that the time at which the blow-up 
seems to occur is
independent of the mesh size, while the duration of the
configuration in which the magnetization at the origin
is opposite to the one of the surrounding
nodes (i.e., the length of the plateau)
decreases with the mesh size.
This is in agreement with the numerical
experiments presented in~\cite{ruggeri2022numerical}.

Finally, in Figure~\ref{fig: grad-evolution-blowup-beta}, we fix $\ell=5$ to have a mesh size $h_5 = 0.0442$ and 
compare the evolution in time of
the $\bW^{1,\infty}$-seminorm $\norm[\bL^\infty(\Omega)]{\Grad\mm_{hk}(t)}$ 
for different values of $\beta~\in~\{0,0.2,0.4,0.6,0.8,1\}$ and $k=h_5/10$.

\subsection{Self-consistency}\label{sec:selfconsistency}
We consider the self-consistency of Algorithm~\ref{algorithm},
in comparison with the tangent plane scheme of~\cite{ruggeri2022numerical}.
We modify the self-consistency test of~\cite{albuquerque2001self}
for the LLG setting to the iLLG case.
In~\cite{albuquerque2001self}, the authors consider the discrete version
of the energy law for LLG and define
an effective Gilbert damping parameter \(\alpha_{\text{dyn}}^j\)
for all \(j \ge 0\), given by
\[
    \alpha_{\text{dyn}}^j \coloneqq -\frac{\E[\mm^{j+1}] - \E[\mm^{j}]}{k\norm{\vv^{j+1}}^2},
\]
where \(\E\) is the micromagnetic energy functional.
The expectation is that if the discrete scheme
approximates the continuous system well, then given the continuous energy law 
\[
    \frac{\mathrm{d}}{\mathrm{d} t}\E[\mm(t)] = -\alpha \norm{\mmt(t)}^2,
\]
we expect that \(\alpha_{\text{dyn}}^j\) should be close to the
Gilbert damping parameter \(\alpha\).
In the iLLG setting, we consider the discrete version
of the energy law~\eqref{eq:strong_energy_law}, and define
\[
    \alpha_{\text{dyn}}^j
        = -\frac{\mathcal{J}[\mm^{j+1},\vv^{j+1}] - \mathcal{J}[\mm^{j},\vv^j]}{k\norm{\vv^{j+1}}^2}.
\]
We first make two observations.
As we are only dealing with weak solutions
in the sense of Definition~\ref{def: weak solution},
the energy law~\eqref{eq:strong_energy_law}
is expected to hold in the sense of an
inequality integrated over the full
time \((0,T)\) (see Definition~\ref{def: weak solution}\eqref{def: weak solution iv}).
It is therefore too much to expect that \(\alpha_{\text{dyn}}^{j} \to \alpha\)
in a pointwise sense for each \(j\in\N\).
We generally do expect
however that \(\alpha_{\text{dyn}}^{j} \geq \alpha\)
for sufficiently small \(k\).

Moreover, we apply an implicit scheme along the lines of~\cite{ruggeri2022numerical}
for the initialization step. Setting \(\vv_h^0 \equiv \0\),
we can deduce from~\eqref{eq:initial_energy_law}
that
\[
    \mathcal{J}[\mm_h^1,\vv_h^1]
    + \alpha k\norm{\vv_h^1}^2
    + \frac{\tau}{2}\norm{\vv_h^1}^2
    \leq  \mathcal{J}[\mm_h^0,\vv_h^0].
\]
It follows that we have
\[
    \alpha_{\text{dyn}}^0
    = -\frac{\mathcal{J}[\mm_h^1,\vv_h^1] - \mathcal{J}[\mm_h^0,\vv_h^0]}{k\norm{\vv_h^1}^2}
    \geq \alpha + \frac{\tau}{2k}.
\]
Therefore, as the time-step size \(k\to 0\),
we expect that \(\alpha_{\text{dyn}}^0\) diverges to \(+\infty\).
This is expected given that the Gilbert damping term is not the only
term responsible for the energy dissipation at the initial step.
For \(j>0\), the exact form of \(\alpha_{\text{dyn}}^j\) is more complicated,
and found from~\eqref{eq:discrete_energy_equality}.
For all values of \(\beta\in[0,1]\),~\eqref{eq:discrete_energy_equality}
contains unsigned terms and so there is no guarantee that
\(\alpha_{\text{dyn}}^j>\alpha\).
For the experiments, we set \(\Omega = (0,1)^3\), \(\alpha = 0.1\), \(\tau = 0.1\),
and \(T = 1/2, k =\num{3.125e-4}\). We include a Zeeman field
of \(\ff = (1,0,0)\).
The initial condition is given by~\eqref{eq: high_energy_initial_condition}.
The mesh consists of \(154\) elements
and \(63\) nodes, and satisfies \(h_{\max} \approx 0.64\).
As mentioned above, we should expect that
\(\alpha_{\text{dyn}}^0 \geq \alpha + \tau/(2k) = 160.1\).

\begin{figure}[ht!]
    \begin{subfigure}{0.49\textwidth}
    \includegraphics{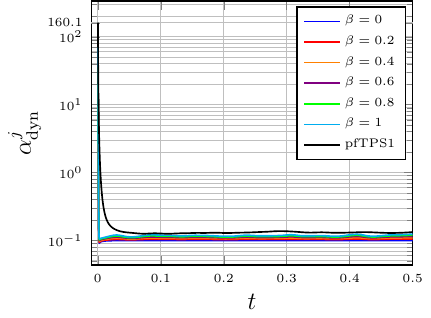}
    \caption{\label{fig:selfconsistency}}
    \end{subfigure}
    \hfill
    \begin{subfigure}{0.49\textwidth}
    \includegraphics{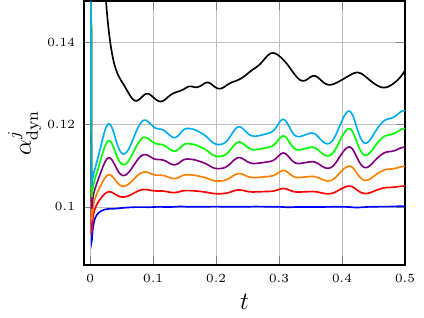}
    \caption{\label{fig:selfconsistency_two}}
    \end{subfigure}
    \caption{Experiment of Section~\ref{sec:selfconsistency}.
    (a) Evolution in time of the effective damping parameter
    \(\alpha_{\text{dyn}}^j\). (b) Restricted \(y\)-axis version of (a).}
\end{figure}

We display the result in Figure~\ref{fig:selfconsistency},
and the same plot where the vertical axis is cut off at 0.15 is shown in
Figure~\ref{fig:selfconsistency_two}.
Firstly, we see in Figure~\ref{fig:selfconsistency} that the initialization step
performs as expected, hovering slightly above \(160.1\) for all methods.
In Figure~\ref{fig:selfconsistency_two} however, we clearly see that
pfTPS1 underperforms, with significantly larger damping overall
and a much more gradual approach to \(\alpha = 0.1\), taking
until about \(t=0.05\) to stop decreasing.
On the other hand, we see that for \(\beta=0\), the effective Gilbert parameter is quickly
attained after initialization, and does indeed dip slightly below \(\alpha\).
As we increase \(\beta\), we see the effective Gilbert parameter
becoming larger, but still remaining below the one corresponding to pfTPS1.

\FloatBarrier
\section{Proofs}\label{sec:proofs}

In this section,
we collect the proofs of all results of the work.

\subsection{Well-posedness, discrete energy law, and stability}
We first show some nodal properties satisfied by the approximations,
which guarantee the well-posedness of the discrete tangent space
for each step of Algorithm~\ref{algorithm}.

\begin{lemma}\label{lem: well-posedness discrete tangent space}
	For every node $\zz\in\mathcal{N}_h$ it holds that
	\begin{itemize}
		\item[\rm (a)] $|\mm_h^{j+1}(\zz)|^2 = \frac{4}{3} |\mm_h^{j}(\zz)|^2 -\frac{1}{3} |\mm_h^{j-1}(\zz)|^2 + k^4 |{\rm d}_t^2\mm_h^{j+1}(\zz)|^2$ for every $j \ge 1$;
		\item[\rm (b)] $|\mm_h^{j+1}(\zz)|^2 \geq |\mm_h^{j}(\zz)|^2 \geq 1$ for every $j \ge 0$;
		\item[\rm (c)] $|\mhat_h^{j+1}(\zz)| \geq 1$ for every $j \ge 0$.
	\end{itemize}
	This implies that the discrete tangent space $\bT_h(\mhat_h^{j+1})$ for all $j \ge 0$ is well-defined.
\end{lemma}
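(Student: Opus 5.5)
The argument is nodewise and rests on one orthogonality relation: every velocity lies in the relevant discrete tangent space. Fix $\zz\in\NN_h$ and suppress it from the notation.

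\emph{First step.} Since $\vv_h^1\in\bT_h(\mm_h^0)$, we have $\mm_h^0\cdot\vv_h^1=0$. Hence
\[
|\mm_h^1|^2=|\mm_h^0+k\vv_h^1|^2=|\mm_h^0|^2+k^2|\vv_h^1|^2\ge|\mm_h^0|^2=1,
\]
because $\mm_h^0\in\boldsymbol{\mathcal{M}}_{h,0}$. This gives (b) for $j=0$. Statement (c) for $j=0$ is immediate, since $\mhat_h^1=\mm_h^0$ by \eqref{eq:mhk-hat-plus}.

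\emph{Identity (a).} For $j\ge1$ the update in Algorithm~\ref{algorithm}(ii)(c) gives $2k\vv_h^{j+1}=3\mm_h^{j+1}-4\mm_h^j+\mm_h^{j-1}$. The tangent-space condition reads $\mhat_h^{j+1}\cdot\vv_h^{j+1}=0$ with $\mhat_h^{j+1}=2\mm_h^j-\mm_h^{j-1}$, so
\[
(2\mm^j-\mm^{j-1})\cdot(3\mm^{j+1}-4\mm^j+\mm^{j-1})=0 .
\]
The key rewriting is $\mhat^{j+1}=\mm^{j+1}-k^2\dt^2\mm^{j+1}$ and $\tfrac12(3\mm^{j+1}-4\mm^j+\mm^{j-1})=k\vv^{j+1}$. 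I will apply the pointwise $G$-norm identity \eqref{eq:G_inner_product} with $\uu=\mm$:
\[
\vv^{j+1}\cdot\mm^{j+1}=\dt|\mathcal{U}^{j+1}|_G^2+\tfrac{k^3}{4}|\dt^2\mm^{j+1}|^2 .
\]
Its left-hand side equals $\vv^{j+1}\cdot k^2\dt^2\mm^{j+1}$, by orthogonality to $\mhat^{j+1}$.

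I would, however, verify (a) directly by expanding in the three vectors $a=\mm^{j+1}$, $b=\mm^j$, $c=\mm^{j-1}$. The identity (a) is equivalent to
\[
3|a|^2-4|b|^2+|c|^2-3|a-2b+c|^2=0 .
\]
On the other hand, the orthogonality gives $(2b-c)\cdot(3a-4b+c)=0$, which expands to
\[
6a\cdot b-3a\cdot c-8|b|^2+6b\cdot c-|c|^2=0 .
\]
Now compute $3|a-2b+c|^2=3|a|^2+12|b|^2+3|c|^2-12a\cdot b+6a\cdot c-12b\cdot c$. Substituting gives
\[
3|a|^2-4|b|^2+|c|^2-3|a-2b+c|^2=-16|b|^2-2|c|^2+12a\cdot b-6a\cdot c+12b\cdot c ,
\]
which is exactly $2\times$(orthogonality relation) $=0$. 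This is routine algebra and proves (a).

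\emph{Induction for (b) and (c).} From (a),
\[
|\mm^{j+1}|^2-|\mm^j|^2=\tfrac13\bigl(|\mm^j|^2-|\mm^{j-1}|^2\bigr)+k^4|\dt^2\mm^{j+1}|^2\ge0
\]
whenever $|\mm^j|\ge|\mm^{j-1}|$. Induction from the base case $j=0$ then yields the monotonicity in (b), and hence $|\mm^j|\ge1$ for all $j$.

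For (c), the same orthogonality gives, for $j\ge1$,
\[
|\mhat^{j+1}|^2=\mhat^{j+1}\cdot\bigl(\mm^{j+1}-\tfrac23k\vv^{j+1}\bigr)+\dots
\]
It is simpler to note that $\mm^{j+1}=\tfrac23\mhat^{j+1}+\tfrac13\mm^j+\tfrac23k\vv^{j+1}$, though this is not orthogonal. Instead I use the expansion of $|\mhat^{j+1}|^2$:
\[
|2b-c|^2=|b|^2+|b-c|^2+2b\cdot(b-c)=2|b|^2-|c|^2+2|b-c|^2\ge|b|^2+\bigl(|b|^2-|c|^2\bigr)\ge|b|^2\ge1 ,
\]
using (b). So (c) holds.

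\emph{Well-definedness.} Because $\mhat^{j+1}(\zz)\neq\0$ at every node, $\bT_h(\mhat_h^{j+1})$ is a well-defined subspace of codimension $|\NN_h|$. The main obstacle is only the algebraic check of (a); everything else follows by induction.
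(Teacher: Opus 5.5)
Your proof is correct and follows essentially the paper's route. You use nodewise algebra with the orthogonality $\mhat_h^{j+1}(\zz)\cdot\vv_h^{j+1}(\zz)=0$ for (a), checking it by direct expansion rather than via the paper's substitution $\mm_h^{j+1}=\mhat_h^{j+1}+k^2\dt^2\mm_h^{j+1}$. For (b) you induct from $|\mm_h^1(\zz)|^2=|\mm_h^0(\zz)|^2+k^2|\vv_h^1(\zz)|^2$. For (c) you use $|\mm_h^{j-1}(\zz)|\le|\mm_h^j(\zz)|$, where your identity $|2b-c|^2=2|b|^2-|c|^2+2|b-c|^2$ simply replaces the paper's reverse triangle inequality.

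Before writing it up, delete the abandoned asides. In particular, remove the dangling ``$+\dots$'' display. Also remove the identity $\mm^{j+1}=\tfrac23\mhat^{j+1}+\tfrac13\mm^j+\tfrac23k\vv^{j+1}$, which is wrong; the correct coefficients give $\tfrac13\mhat^{j+1}+\tfrac23\mm^j$.
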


\begin{proof}
Let $\zz\in\mathcal{N}_h$ and $j\ge1$.
By Algorithm~\ref{algorithm}, it holds that $\mhat_h^{j+1}=2\mm_h^{j}-\mm_h^{j-1}$ and $3\mm_h^{j+1}=4\mm_h^{j}-\mm_h^{j-1}+2k\vv_h^{j+1}$.
Since $k^2\dt^2\mm_h^{j+1}=\mm_h^{j+1}-2\mm_h^{j}+\mm_h^{j-1}$, this yields
\[
    \mm_h^{j+1}=\mhat_h^{j+1}+k^2\dt^2\mm_h^{j+1}
    \quad\text{and}\quad
    2k^2\dt^2\mm_h^{j+1}=\frac{4}{3}k\vv_h^{j+1}-\frac{4}{3}\big(\mm_h^{j}-\mm_h^{j-1}\big).
\]
Expanding the square in the first identity, inserting the second one, and using that $\vv_h^{j+1}(\zz)\cdot\mhat_h^{j+1}(\zz)=0$ (since $\vv_h^{j+1}\in\bT_h(\mhat_h^{j+1})$) as well as $\mhat_h^{j+1}=2\mm_h^{j}-\mm_h^{j-1}$, we obtain
\begin{align*}
    |\mm_h^{j+1}(\zz)|^2-k^4|\dt^2\mm_h^{j+1}(\zz)|^2
    &=|\mhat_h^{j+1}(\zz)|^2+2k^2\dt^2\mm_h^{j+1}(\zz)\cdot\mhat_h^{j+1}(\zz)\\
    &=\Big(\mhat_h^{j+1}(\zz)-\frac{4}{3}\big(\mm_h^{j}(\zz)-\mm_h^{j-1}(\zz)\big)\Big)\cdot\mhat_h^{j+1}(\zz)\\
    &=\Big(\frac23\mm_h^{j}(\zz)+\frac13\mm_h^{j-1}(\zz)\Big)\cdot\big(2\mm_h^{j}(\zz)-\mm_h^{j-1}(\zz)\big)\\
    &=\frac{4}{3}|\mm_h^{j}(\zz)|^2-\frac{1}{3}|\mm_h^{j-1}(\zz)|^2.
\end{align*}
This proves~\textrm{(a)}. An immediate consequence of \textrm{(a)} is that
\begin{align*}
	|\mm_h^{j+1}(\zz)|^2 &\geq |\mm_h^{j}(\zz)|^2 + \frac{1}{3} \big(|\mm_h^{j}(\zz)|^2 -|\mm_h^{j-1}(\zz)|^2\big).
\end{align*}
Since $|\mm_h^1(\zz)|^2 = |\mm_h^0(\zz)|^2 + k^2 |\vv_h^1(\zz)|^2 \ge |\mm_h^0(\zz)|^2 = 1$, induction on $j$ proves~\textrm{(b)}.
Finally, for $j=0$, (c) follows from $\mhat_h^1=\mm_h^0$ and $|\mm_h^0(\zz)|=1$.
For $j\ge1$, the reverse triangle inequality and~\textrm{(b)} give
\[
    |\mhat_h^{j+1}(\zz)|=|2\mm_h^{j}(\zz)-\mm_h^{j-1}(\zz)|
    \ge 2|\mm_h^{j}(\zz)|-|\mm_h^{j-1}(\zz)|
    \ge|\mm_h^{j}(\zz)|\ge1.
\]
This concludes the proof of~\textrm{(c)}.
\end{proof}

Next, we prove the well-posedness of the algorithm and derive the discrete energy law
satisfied by the approximations.

\begin{proof}[Proof of Proposition~\ref{prop: energy-estimate-1}]
For the initialization step, we have the linear system
\begin{equation*}
    \tau\inner{\dt\vv_h^1}{\pphi_h}
    +\alpha \inner{\vv_h^1}{\pphi_{h}}
    + \inner{\mm_{h}^{0}\times \vv_h^1}{\pphi_{h}}
    +  k\inner{\Grad\vv_h^1}{\Grad \pphi_{h}}
    = -\inner{\Grad\mm_{h}^{0}}{\Grad \pphi_{h}}.
\end{equation*}
This is easily seen to have
a unique solution $\vv_h^1 \in \bT_h(\mm_h^{0})$ via the Lax--Milgram lemma.
Inserting $\pphi_h = k\vv_h^1$ in~\eqref{alg1:magnetization_update_first} yields
\begin{equation*}
    \tau\inner{\vv_h^1 - \vv_h^0}{\vv_h^1}
    +\alpha k\norm{\vv_h^1}^2
    +  k\inner{\Grad\mm_{h}^{1}}{\Grad \vv_h^1}
    = 0.
\end{equation*}
We recall the vector identity for \(\aaa,\bb\in\R^3\)
\begin{equation}\label{eq:difference_vector_identity}
    2(\aaa-\bb)\cdot\aaa = |\aaa|^2-|\bb|^2+|\aaa-\bb|^2.
\end{equation}
With $\mm_h^1 = \mm_h^0 + k \vv_h^1$
and $\vv_h^1 = \vv_h^0 + k \dt\vv_h^1$,
we use~\eqref{eq:difference_vector_identity} to see
\begin{align*}
\inner{\Grad\mm_{h}^{1}}{ k \Grad \vv_h^1}
&= \frac{1}{2}\norm{\Grad\mm_h^1}^2
    -\frac{1}{2}\norm{\Grad\mm_h^0}^2
    +\frac{k^2}{2}\norm{\Grad\vv_h^1}^2,\\
    \label{eq: vector identity}
    \inner{\vv_h^1 - \vv_h^0}{\vv_h^1}
        &= \frac{1}{2}\norm{\vv_h^1}^2
        - \frac{1}{2}\norm{\vv_h^0}^2
        + \frac{k^2}{2}\norm{\dt\vv_h^1}^2.
\end{align*}
Hence, we have
\begin{equation*}
    \mathcal{J}[\mm_h^1,\vv_h^1]
    + \alpha k\norm{\vv_h^1}^2
    + \frac{\tau k^2}{2}\norm{\dt\vv_h^1}^2
    +  \frac{k^2}{2}\norm{\Grad\vv_h^1}^2
    =  \mathcal{J}[\mm_h^0,\vv_h^0],
\end{equation*}
which is~\eqref{eq:initial_energy_law}.
Rearranging~\eqref{alg1:magnetization_update}, we have
\begin{multline*}
    \beta\frac{\tau}{k}\inner{\vv_h^{j+1}}{\pphi_h}
    +(1-\beta) \frac{3\tau}{2k}\inner{\vv_h^{j+1}}{\pphi_h}
    +\alpha \inner{\vv_{h}^{j+1}}{\pphi_{h}}\\
    + \inner{\mhat_{h}^{j+1}\times \vv_{h}^{j+1}}{\pphi_{h}}
    + \frac{2k}{3}\inner{\Grad\vv_h^{j+1}}{\Grad \pphi_{h}}\\
    \mkern-100mu=\beta\frac{\tau}{k}\inner{\vv_h^j}{\pphi_h}
    +(1-\beta)\frac{\tau}{2k}\inner{4 \vv_h^j-\vv_h^{j-1}}{\pphi_h}
    -\frac{1}{3}\inner{\Grad(4\mm_h^j-\mm_h^{j-1})}{\Grad \pphi_{h}}.
\end{multline*}
The left-hand side is clearly coercive in $\bH^1(\Omega)$
with the right-hand side being bounded.
It follows that the solution $\vv_h^{j+1} \in \bT_h(\mhat_h^{j+1})$ exists
and is unique.
Inserting $\pphi_h = k\vv_h^{j+1}$ yields that
\begin{align*}
    \beta\tau\inner{\vv_h^{j+1}-\vv_h^j}{\vv_h^{j+1}}
    + (1-\beta) \frac{\tau}{2}\inner{3\vv_h^{j+1}-4\vv_h^j+\vv_h^{j-1}}{\vv_h^{j+1}}
    + \alpha k\norm{\vv_{h}^{j+1}}^2\\
    +\frac{k}{3}\inner{\Grad(4\mm_h^j-\mm_h^{j-1}+2k\vv_h^{j+1})}{\Grad \vv_{h}^{j+1}}=0.
\end{align*}
Define $\MM_h^{j} \coloneqq (\mm_h^j,\mm_h^{j-1})$ and $\VV_h^j \coloneqq (\vv_h^j, \vv_h^{j-1})$, as well as the notation $\norm[G]{\mathcal{U}}^2 \coloneqq \int_\Omega |\mathcal{U}|_G^2\,\mathrm{d}\xx$ for pairs $\mathcal{U}$ of (vector- or matrix-valued) functions, where $|\cdot|_G$ is defined in~\eqref{eq:G-norm}.
Applying~\eqref{eq:G_inner_product} yields
\begin{align*}
    \frac{1}{2}\inner{3\vv_h^{j+1}-4\vv_h^{j}+\vv_h^{j-1}}{\vv_h^{j+1}}
    = \norm[G]{\VV_h^{j+1}}^2-\norm[G]{\VV_h^{j}}^2
    + \frac{k^4}{4}\norm{\dt^2\vv_h^{j+1}}^2.
\end{align*}
Analogously, we obtain that
\begin{align*}
    \frac{k}{3}\inner{\Grad(4\mm_h^j-\mm_h^{j-1}+2k\vv_h^{j+1})}{\Grad \vv_{h}^{j+1}}
    &= k\Big\langle\Grad \mm_h^{j+1},\Grad\frac{1}{2k}\big( 3 \mm_h^{j+1} - 4 \mm_h^j + \mm_h^{j-1}\big)\Big\rangle \\ 
    & = \norm[G]{\Grad\MM_h^{j+1}}^2
    - \norm[G]{\Grad\MM_h^{j}}^2
    +\frac{k^4}{4}\norm{\dt^2\Grad\mm_h^{j+1}}^2.
\end{align*}
Putting everything together and applying again the vector
identity~\eqref{eq:difference_vector_identity} yields that
\begin{equation}
    \begin{aligned}\label{eq:loop_energy_law}
   \beta \tau & \Big( \frac{1}{2} \norm{\vv_h^{j+1}}^2 - \frac{1}{2} \norm{\vv_h^{j}}^2 
    + \frac{k^2}{2} \norm{\dt\vv_h^{j+1}}^2 \Big)\\
    &+ (1-\beta)  \tau \Big( \norm[G]{\VV_h^{j+1}}^2 - \norm[G]{\VV_h^{j}}^2 + \frac{k^4}{4}\norm{\dt^2\vv_h^{j+1}}^2 \Big)\\
    &+\alpha  k\norm{\vv_{h}^{j+1}}^2
    + \norm[G]{\Grad\MM_h^{j+1}}^2
    - \norm[G]{\Grad\MM_h^{j}}^2
    + \frac{k^4}{4}\norm{\dt^2\Grad\mm_h^{j+1}}^2
    =0.
\end{aligned}
\end{equation}
Using~\eqref{eq:G_norm_difference}, we see that for $\ell \in \{j,j+1\}$ it holds that
\begin{equation}\label{eq: Grad M}
    \norm[G]{\Grad\MM_h^{\ell}}^2
        = \frac{1}{2} \norm{\Grad\mm_h^{\ell}}^2
        + \frac{k}{4}\dt\norm{\Grad\mm_h^{\ell}}^2
        + \frac{k^2}{2} \norm{\dt\Grad\mm_h^{\ell}}^2
\end{equation}
and
\begin{equation}\label{eq: V}
    \norm[G]{\VV_h^{\ell}}^2
    = \frac{1}{2} \norm{\vv_h^{\ell}}^2
    + \frac{k}{4}\dt\norm{\vv_h^{\ell}}^2
    + \frac{k^2}{2} \norm{\dt\vv_h^{\ell}}^2.
\end{equation}
Inserting these relations into~\eqref{eq:loop_energy_law},
we obtain exactly~\eqref{eq:discrete_energy_equality}.

To prove~\eqref{eq:corollary_energy_law}, it suffices to sum~\eqref{eq:discrete_energy_equality} over $j=1,\ldots,n-1$. 
Thanks to telescoping sums, this yields that
\begin{equation*}
    \begin{split}
        \mathcal{J}&[\mm_h^n,\vv_h^n]+\alpha k \sum_{j=1}^{n-1}\norm{\vv_h^{j+1}}^2+\frac{\tau k^2}{2} \sum_{j=1}^{n-1}\norm{\dt \vv_h^{j+1}}^2
        +\frac{k^2}{2}\norm{\dt \Grad\mm_h^n}^2\\
        &+\frac{k^4}{4}\sum_{j=1}^{n-1}\norm{\dt^2\Grad\mm_h^{j+1}}^2+\frac{k}{4}\dt\norm{\Grad\mm_h^n}^2+(1-\beta)\tau\Big[\frac{ k^4}{4}\sum_{j=1}^{n-1}\norm{\dt^2\vv_h^{j+1}}^2+\frac{k}{4} \dt\norm{\vv_h^n}^2\Big]\\
        &=\mathcal{J}[\mm_h^1,\vv_h^1] + \frac{k^2}{2}\norm{\dt\Grad\mm_h^1}^2+\frac{k}{4}\dt\norm{\Grad\mm_h^1}^2+(1-\beta)\tau\Big[\frac{k^2}{2}\sum_{j=1}^{n-1}\norm{\dt\vv_h^j}^2+\frac{k}{4}\dt\norm{\vv_h^1}^2\Big].
    \end{split}
\end{equation*}
Inserting the energy law~\eqref{eq:initial_energy_law} for the first time step ($j=0$)
in the latter,
and noting that $\dt\mm_h^1 = \vv_h^1$ allows us to write
\begin{equation*}\label{eq: energy_equality_last_step}
    \begin{split}
        \mathcal{J}&[\mm_h^n,\vv_h^n]+\alpha k \sum_{j=0}^{n-1}\norm{\vv_h^{j+1}}^2+\frac{k}{4}\dt\norm{\Grad\mm_h^n}^2+\frac{\tau k^2}{2} \sum_{j=0}^{n-1}\norm{\dt \vv_h^{j+1}}^2
        +\frac{k^2}{2}\norm{\dt \Grad\mm_h^n}^2\\
        &+\frac{k^4}{4}\sum_{j=1}^{n-1}\norm{\dt^2\Grad\mm_h^{j+1}}^2+(1-\beta)\tau\Big[\frac{ k^4}{4}\sum_{j=1}^{n-1}\norm{\dt^2\vv_h^{j+1}}^2+\frac{k}{4} \dt\norm{\vv_h^n}^2\Big]\\
        &=\mathcal{J}[\mm_h^0,\vv_h^0] + \frac{k}{4}\dt\norm{\Grad\mm_h^1}^2+(1-\beta)\tau\Big[\frac{k^2}{2}\sum_{j=1}^{n-1}\norm{\dt\vv_h^j}^2+\frac{k}{4}\dt\norm{\vv_h^1}^2\Big].
    \end{split}
\end{equation*}
Exploiting the equality
\begin{align*}
\sum_{j=0}^{n-1}\norm{\dt\vv_h^{j+1}}^2 - (1-\beta)\sum_{j=1}^{n-1}\norm{\dt\vv_h^{j}}^2 & = \norm{\dt\vv_h^{n}}^2+\beta \sum_{j=1}^{n-1}\norm{\dt\vv_h^j}^2\\
&=\beta \sum_{j=0}^{n-1}\norm{\dt\vv_h^{j+1}}^2+(1-\beta)\norm{\dt\vv_h^n}^2,
\end{align*}
we conclude the proof of~\eqref{eq:corollary_energy_law}.
\end{proof}

\begin{remark}
    Notice that if $\beta = 0$ we would
    lose the boundedness of $k^2\sum_{j=0}^{n-1}\norm{\dt\vv_h^{j+1}}^2$
    and consequently the boundedness
    of $k^2\sum_{j=1}^{n-1}\norm{\dt^2\mm_h^{j+1}}^2$
    (see Lemma~\ref{prop:dtt_stability} below),
    which is (analytically, but not numerically)
    crucial for the second-order in time unit length
    constraint and to pass to the limit in the variational
    formulation in the proof of Theorem~\ref{thm: main result}.  
    The case $\beta = 1$ is excellent analytically,
    since it guarantees the needed boundedness of
    $k^2\sum_{j=1}^{n-1}\norm{\dt^2\mm_h^{j+1}}^2$, but
    $\beta=0$ (i.e., BDF2 discretization for the inertial term) performs
    better numerically, especially in the case where the
    time-step size is large in terms of energy conservation
    (this is a problem noticed in~\cite{ruggeri2022numerical},
    and it is why the author recommends using the nonlinear angular momentum
    method instead of the tangent plane scheme).
\end{remark}

The following lemma,
adapted from~\cite[Lemma~4.5]{dong2024bdf},
provides us with useful bounds and allows us to prove
unconditional second-order consistency for the constraint when $\beta \in (0,1]$.

\begin{lemma}\label{prop:dtt_stability}
For all $j\geq 1$, it holds that
\begin{equation}\label{eq: bound dt m j+1}
\norm{\dt\mm_h^{j+1}}
    \leq \frac{2}{3}\norm{\vv_h^{j+1}}
    + \frac{1}{3}\norm{\dt\mm_h^{j}}
\end{equation}
and, for all $n\ge 2$,
\begin{equation}\label{eq: bound dt2 m j+1}
    \sum_{j=2}^{n-1}\norm{\dt^2\mm_h^{j+1}}^2
    \leq \frac{2}{7}\norm{\dt^2 \mm_h^{2}}^2
    +\frac{8}{7}\sum_{j=2}^{n-1}\norm{\dt\vv_h^{j+1}}^2.
\end{equation}
\end{lemma}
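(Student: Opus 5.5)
The plan is to extract a one-step linear recursion for the discrete velocities $\dt\mm_h^{j+1}$ directly from step~\eqref{item: def mmhj} of Algorithm~\ref{algorithm}, and then obtain both estimates from it.

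\emph{Recursion for $\dt\mm_h^{j+1}$ and proof of~\eqref{eq: bound dt m j+1}.} For $j\ge1$, step~\eqref{item: def mmhj} states $\mm_h^{j+1}=\frac43\mm_h^j-\frac13\mm_h^{j-1}+\frac23k\vv_h^{j+1}$. Subtracting $\mm_h^j$ from both sides and dividing by $k$ gives
\begin{equation*}
\dt\mm_h^{j+1}=\frac13\,\dt\mm_h^{j}+\frac23\,\vv_h^{j+1}\qquad\text{for all } j\ge1 .
\end{equation*}
Taking $\bL^2$-norms and applying the triangle inequality yields~\eqref{eq: bound dt m j+1} immediately.

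\emph{Recursion for $\dt^2\mm_h^{j+1}$.} For $j\ge2$, the identity above holds at both indices $j$ and $j-1$. Subtracting the two instances and dividing by $k$ gives
\begin{equation*}
\dt^2\mm_h^{j+1}=\frac13\,\dt^2\mm_h^{j}+\frac23\,\dt\vv_h^{j+1}\qquad\text{for all } j\ge2 .
\end{equation*}
This is why the sum in~\eqref{eq: bound dt2 m j+1} starts at $j=2$: the recursion needs the BDF2 relation at both steps, and the first step is a backward Euler step.

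\emph{Young's inequality and absorption.} I will use Young's inequality in the form $\norm{\aaa+\bb}^2\le2\norm{\aaa}^2+2\norm{\bb}^2$, which gives
\begin{equation*}
\norm{\dt^2\mm_h^{j+1}}^2\le\frac29\norm{\dt^2\mm_h^{j}}^2+\frac89\norm{\dt\vv_h^{j+1}}^2 .
\end{equation*}
Write $S\coloneqq\sum_{j=2}^{n-1}\norm{\dt^2\mm_h^{j+1}}^2$ and sum over $j=2,\dots,n-1$. After an index shift, the first sum on the right is $\sum_{j=2}^{n-1}\norm{\dt^2\mm_h^{j}}^2\le\norm{\dt^2\mm_h^2}^2+S$. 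Hence
\begin{equation*}
S\le\frac29\norm{\dt^2\mm_h^2}^2+\frac29S+\frac89\sum_{j=2}^{n-1}\norm{\dt\vv_h^{j+1}}^2 .
\end{equation*}
Since $S$ is a finite sum, the term $\frac29 S$ can be absorbed into the left-hand side. Multiplying the result by $9/7$ gives exactly the constants $2/7$ and $8/7$ in~\eqref{eq: bound dt2 m j+1}.

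\emph{Main obstacle.} The argument is elementary, and the only real choice is the Young weight. The splitting with weight $1$ is the one that reproduces the stated constants, and any weight for which $(1+\varepsilon)/9<1$ would still allow absorption. The one point needing care is the index bookkeeping: the recursion is valid only from $j\ge2$, and $\norm{\dt^2\mm_h^2}^2$ must be kept as a separate boundary term.
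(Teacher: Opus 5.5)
Your proposal is correct and is essentially the paper's proof. It uses the same recursions $3\dt\mm_h^{j+1}=2\vv_h^{j+1}+\dt\mm_h^{j}$ for $j\ge1$ and $3\dt^2\mm_h^{j+1}=2\dt\vv_h^{j+1}+\dt^2\mm_h^{j}$ for $j\ge2$, the same weight-one Young splitting with constants $\frac29$ and $\frac89$, and the same summation in which $\norm{\dt^2\mm_h^2}^2$ is isolated and the shifted sum is absorbed (the paper writes the telescoped difference explicitly instead), giving the identical constants $\frac27$ and $\frac87$.
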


\begin{proof}
For \(j\geq 1\) it holds that
\[
    3 \dt\mm_h^{j+1} = 2\vv_h^{j+1} + \dt\mm_h^{j}.
\]
Dividing by $3$ and using the triangle inequality, we obtain~\eqref{eq: bound dt m j+1}.
For $j\ge 2$, it holds that
\[
    3\dt^2\mm_h^{j+1} = 2\dt\vv_h^{j+1} + \dt^2\mm_h^{j}.
\]
Using this and the relation
\((2a+b)^2 \leq 8a^2 + 2b^2\),
it follows that
\[
    \norm{\dt^2\mm_h^{j+1}}^2 \leq \Big(\frac{2}{3}\norm{\dt\vv_h^{j+1}}
    + \frac{1}{3}\norm{\dt^2\mm_h^{j}}\Big)^2
    \leq \frac{8}{9}\norm{\dt\vv_h^{j+1}}^2
    + \frac{2}{9}\norm{\dt^2\mm_h^{j}}^2.
\]
Summing over \(j=2,\dots, n-1\), we see that
\begin{align*}
    \norm{\dt^2 \mm_h^n}^2
    + \frac{7}{9}\sum_{j=3}^{n-1}\norm{\dt^2\mm_h^{j}}^2
    - \frac{2}{9}\norm{\dt^2 \mm_h^{2}}^2 &= \sum_{j=2}^{n-1}\norm{\dt^2\mm_h^{j+1}}^2
    - \frac{2}{9}\sum_{j=2}^{n-1}\norm{\dt^2\mm_h^{j}}^2\\
    &\leq \frac{8}{9}\sum_{j=2}^{n-1}\norm{\dt\vv_h^{j+1}}^2.
\end{align*}
Multiplying both sides by \(\dfrac{9}{7}\) and weakening the inequality gives
\[
    \sum_{j=3}^{n}\norm{\dt^2\mm_h^{j}}^2
    \leq \frac{2}{7}\norm{\dt^2 \mm_h^{2}}^2
    + \frac{8}{7}\sum_{j=2}^{n-1}\norm{\dt\vv_h^{j+1}}^2.
\]
This concludes the proof.
\end{proof}

We conclude this subsection by proving the unconditional stability of Algorithm~\ref{algorithm},
which we also exploit to derive the explicit bounds for the error in the realization of the unit length constraint.

\begin{proof}[Proof of Proposition~\ref{prop:stability}]
Let $\beta\in[0,1]$. We split the proof into three steps.

\emph{Step~1: Proof of~\eqref{eq:stability} with $\norm{\Grad\mm_h^n}^2$ in place of $\norm[\bH^1(\Omega)]{\mm_h^n}^2$.}
Recall the definition of the energy functional~\eqref{eq: Energy functional} and the discrete energy equality~\eqref{eq:corollary_energy_law}. Notice that its right-hand side is bounded by
\begin{multline*}
\mathcal{J}[\mm_h^0,\vv_h^0]+ \frac{k}{4}\dt\norm{\Grad\mm_h^1}^2+(1-\beta)\frac{\tau k}{4}\dt\norm{\vv_h^1}^2 \\
= \frac{1}{4}\norm{\Grad\mm_h^{0}}^2 + \frac{1}{4}\norm{\Grad\mm_h^{1}}^2
+ (1+\beta)\frac{\tau}{4} \norm{\vv_h^0}^2
+ (1-\beta)\frac{\tau}{4} \norm{\vv_h^1}^2
\stackrel{\eqref{eq:initial_energy_law}}{\lesssim}
\norm{\vv_h^{0}}^2 + \norm{\Grad\mm_h^{0}}^2,
\end{multline*}
where the hidden constant depends only on $\tau$ (uniformly in $\beta$).
Recall $\MM_h^{j} \coloneqq (\mm_h^j,\mm_h^{j-1})$ and $\VV_h^j \coloneqq (\vv_h^j, \vv_h^{j-1})$, as well as the definition of the $G$-norm for (vector- or matrix-valued) functions, and define
\[
    \mathcal{J}_{G,h}^n
    \coloneqq
    \norm[G]{\Grad\MM_h^n}^2
    + \frac{\beta\tau}{2}\norm{\vv_h^n}^2
    + (1-\beta)\tau\norm[G]{\VV_h^n}^2,
    \qquad n\ge 1.
\]
By~\eqref{eq: Grad M} and~\eqref{eq: V}, and since $\mathcal{J}[\mm_h^n,\vv_h^n]=\frac{1}{2}\norm{\Grad\mm_h^n}^2+\frac{\tau}{2}\norm{\vv_h^n}^2$ by~\eqref{eq: exchange energy} and~\eqref{eq: Energy functional}, this can be rewritten as
\begin{align*}
    \mathcal{J}_{G,h}^n
    &=
    \frac{1}{2}\norm{\Grad\mm_h^n}^2
    + \frac{k}{4}\dt\norm{\Grad\mm_h^{n}}^2
    + \frac{k^2}{2}\norm{\dt\Grad\mm_h^{n}}^2
    + \frac{\tau}{2}\norm{\vv_h^n}^2
    \\
    &\qquad
    + (1-\beta)\frac{\tau k}{4}\dt\norm{\vv_h^{n}}^2
    + (1-\beta)\frac{\tau k^2}{2}\norm{\dt\vv_h^{n}}^2
    \\
    &=
    \mathcal{J}[\mm_h^n,\vv_h^n]
    + \frac{k^2}{2}\norm{\dt\Grad\mm_h^{n}}^2
    + \frac{k}{4}\dt\norm{\Grad\mm_h^{n}}^2
    + (1-\beta)\tau\Big[\frac{k}{4}\dt\norm{\vv_h^{n}}^2
    + \frac{k^2}{2}\norm{\dt\vv_h^{n}}^2\Big].
\end{align*}
Moreover, we define
\[
    \varrho_1
    \coloneqq
    \frac{k}{4}\dt\norm{\Grad\mm_h^1}^2
    + (1-\beta)\frac{\tau k}{4}\dt\norm{\vv_h^1}^2.
\]
Inserting these expressions into~\eqref{eq:corollary_energy_law}, we can rewrite the discrete energy identity as
\begin{equation}\label{eq: bounds_intermediate}
\begin{aligned}
    &\mathcal{J}_{G,h}^n
    + \alpha k\sum_{j=0}^{n-1}\norm{\vv_h^{j+1}}^2
    + \frac{\beta\tau k^2}{2}\sum_{j=0}^{n-1}\norm{\dt\vv_h^{j+1}}^2
    + \frac{(1-\beta)\tau k^4}{4}\sum_{j=1}^{n-1}\norm{\dt^2\vv_h^{j+1}}^2
    \\
    &\qquad
    + \frac{k^4}{4}\sum_{j=1}^{n-1}\norm{\dt^2\Grad\mm_h^{j+1}}^2
    = \mathcal{J}[\mm_h^0,\vv_h^0]+\varrho_1
\end{aligned}
\end{equation}
for all $n\ge2$. By the preceding bound, the right-hand side of~\eqref{eq: bounds_intermediate} is $\lesssim\norm{\vv_h^0}^2+\norm{\Grad\mm_h^0}^2$.
All terms on the left-hand side are nonnegative.
Moreover, the norm equivalence~\eqref{eq: G-norm-equivalence}
gives
\[
    \norm[G]{\Grad\MM_h^n}^2
    \ge \mu_-\norm{\Grad\mm_h^n}^2
\]
and, since $0<\mu_-<1/2$,
\begin{align*}
    \frac{\beta\tau}{2}\norm{\vv_h^n}^2
    +(1-\beta)\tau\norm[G]{\VV_h^n}^2
    &\ge
    \tau\Big(\frac{\beta}{2}+(1-\beta)\mu_-\Big)
    \norm{\vv_h^n}^2
    \ge \tau\mu_-\norm{\vv_h^n}^2.
\end{align*}
Since $\alpha,\tau>0$, these estimates yield
\begin{equation}\label{eq:first_bound}
    \begin{split}
         \norm{\Grad\mm_h^{n}}^2&+\norm{\vv_h^{n}}^2  
        +  k\sum_{j=0}^{n-1}\norm{\vv_{h}^{j+1}}^2
        + \beta k^2 \sum_{j=0}^{n-1}\norm{\dt\vv_h^{j+1}}^2
        + (1-\beta)k^4\sum_{j=1}^{n-1}\norm{\dt^2\vv_h^{j+1}}^2\\
        &+ k^4\sum_{j=1}^{n-1}\norm{\dt^2\Grad\mm_h^{j+1}}^2
        \lesssim \norm{\vv_h^{0}}^2
        + \norm{\Grad\mm_h^{0}}^2 \lesssim 1,
    \end{split}
\end{equation}
where the hidden constant depends only on $\alpha$ and $\tau$.
Moreover, in the case $\beta>0$, Lemma~\ref{prop:dtt_stability} and~\eqref{eq:first_bound} yield
\begin{align*}
    k^2\sum_{j=2}^{n-1}\norm{\dt^2\mm_h^{j+1}}^2
    &\stackrel{\eqref{eq: bound dt2 m j+1}}{\lesssim} k^2\norm{\dt^2\mm_h^{2}}^2
    + k^2\sum_{j=2}^{n-1}\norm{\dt\vv_h^{j+1}}^2
    \stackrel{\eqref{eq:first_bound}}{\lesssim} k^2\norm{\dt^2\mm_h^{2}}^2 + \frac{1}{\beta}.
\end{align*}
The term \(k^2\norm{\dt^2\mm_h^{2}}^2\) is also bounded, uniformly in $\beta$. Indeed,
\begin{align*}
    k^2 \dt^2\mm_h^{2} &= \mm_h^2 - 2\mm_h^1 + \mm_h^0
    = \frac{2}{3}k(\vv_h^2 - \dt\mm_h^1)
    = \frac{2}{3}k(\vv_h^2 - \vv_h^1)
\end{align*}
and therefore 
\begin{align*}
    k^4\norm{\dt^2\mm_h^{2}}^2
    &\lesssim k^2\big(\norm{\vv_h^1}^2
    +  \norm{\vv_h^2}^2\big)
    \mathrel{\stackrel{\eqref{eq:initial_energy_law},\eqref{eq:first_bound}}{\lesssim}} k^2.
\end{align*}
Altogether, we are led to
\begin{equation}\label{eq:second_bound}
    k^2\sum_{j=1}^{n-1}\norm{\dt^2\mm_h^{j+1}}^2\lesssim 1+\frac{1}{\beta},
\end{equation}
where the hidden constant depends only on $\alpha$ and $\tau$.
Multiplying~\eqref{eq:second_bound} by $\beta\le1$, we obtain $\beta k^2\sum_{j=1}^{n-1}\norm{\dt^2\mm_h^{j+1}}^2\lesssim\beta+1\le2$, i.e., the factor $\beta$ in~\eqref{eq:stability} compensates for the factor $\beta^{-1}$ in~\eqref{eq:second_bound}. Together with~\eqref{eq:first_bound}, this proves~\eqref{eq:stability} with $\norm{\Grad\mm_h^n}^2$ in place of $\norm[\bH^1(\Omega)]{\mm_h^n}^2$ for all $\beta\in[0,1]$, with a hidden constant that depends only on $\alpha$ and $\tau$ (uniformly in $\beta$).

\emph{Step~2: Constraint violation.}
To get the unit length constraint error for all \(\beta\in[0,1]\), we
use~\cite[Proposition 3.4]{akrivis2025quadratic} or~\cite[Proposition 3.8]{afp2026_partI}.
Noting that nodewise \(|\mm_h^0|^2 = 1\) and
\(|\mm_h^1|^2 = |\mm_h^0|^2 + k^2|\vv_h^1|^2\),
we see that
\begin{equation}\label{eq:quadratic_inequality}
    \norm[L^1(\Omega)]{\II_h\big[|\mm_h^n|^2\big] - 1} \lesssim 
    k^2\norm{\vv_h^1}^2
    + k^4 \sum_{i=1}^{n-1}\norm{\dt^2\mm_h^{i+1}}^2,
\end{equation}
where the hidden constant depends only on the $\gamma$-quasi-uniformity of $\mathcal{T}_h$.

\emph{Case $\beta\in(0,1]$.} By~\eqref{eq:initial_energy_law} and~\eqref{eq:second_bound}, it follows that
\[
    \norm[L^1(\Omega)]{\II_h\big[|\mm_h^n|^2\big] - 1} \lesssim k^2\norm{\vv_h^1}^2 + k^2\Big(k^2\sum_{i=1}^{n-1}\norm{\dt^2\mm_h^{i+1}}^2\Big) \lesssim (1+\beta^{-1})\,k^2,
\]
which proves~\eqref{eq:unit_length_constraint}. Here, the hidden constant depends only on the $\gamma$-quasi-uniformity of $\mathcal{T}_h$, $\alpha$, and $\tau$, while the factor $\beta^{-1}$ stems from~\eqref{eq:second_bound}. In particular, $C_3$ depends on $\beta^{-1}$.

\emph{Third-order estimate, for all $\beta\in[0,1]$.}
Since \(\vv_h^1 = \vv_h^0 + k \dt\vv_h^1\),
we have from~\eqref{eq:quadratic_inequality} that
\begin{equation*}
	\norm[L^1(\Omega)]{\II_h\big[|\mm_h^n|^2\big] - 1} \lesssim 
	k^2\norm{\vv_h^0}^2
	+ k^4 \norm{\dt\vv_h^1}^2 + k^4 \sum_{j=1}^{n-1}\norm{\dt^2\mm_h^{j+1}}^2.
\end{equation*}
If \(\vv_h^0 \equiv \0\), we have
\begin{equation*}
	\norm[L^1(\Omega)]{\II_h\big[|\mm_h^n|^2\big] - 1} \lesssim 
	k^3 \Big(k \norm{\dt\vv_h^1}^2 + k\sum_{j=1}^{n-1}\norm{\dt^2\mm_h^{j+1}}^2\Big),
\end{equation*}
which is of third order in time if the quantity in brackets is uniformly bounded.
Under~\eqref{eq: regularity condition}, this proves~\eqref{eq:unit_length_constraint_third}, with $C_5$ that depends only on the $\gamma$-quasi-uniformity of $\mathcal{T}_h$ and on $C_4$.

\emph{Case $\beta=0$.} In this case, \cite[Proposition 3.4]{akrivis2025quadratic} or~\cite[Proposition 3.8]{afp2026_partI} still applies, but~\eqref{eq:second_bound} is not available, and~\eqref{eq:stability} provides no bound on $k^2\sum_{j=1}^{n-1}\norm{\dt^2\mm_h^{j+1}}^2$. 
Proceeding as in~\cite{afp2026_partI} and using the discrete inverse estimate in time from~\cite[Lemma~2.2]{akrivis2025quadratic}, i.e., $k^4\sum_{j=1}^{n-1}\norm{\dt^2\mm_h^{j+1}}^2\le\frac{18}{7}k^2\sum_{j=0}^{n-1}\norm{\vv_h^{j+1}}^2$, we obtain that
\begin{align*}
    \norm[L^1(\Omega)]{\II_h\big[|\mm_h^n|^2\big]-1}&\lesssim k^2\norm{\vv_h^1}^2+k^4 \sum_{j=1}^{n-1}\norm{\dt^2\mm_h^{j+1}}^2\lesssim k^2\norm{\boldsymbol{v}_h^1}^2+k^2 \sum_{j=0}^{n-1}\norm{\boldsymbol{v}_h^{j+1}}^2 \mathrel{\stackrel{\eqref{eq:initial_energy_law},\eqref{eq: bounds_intermediate}}{\lesssim}} k,
\end{align*}
where the hidden constant depends only on the $\gamma$-quasi-uniformity of $\mathcal{T}_h$,
$\alpha$ and $\tau$.
This shows the first-order constraint error, yielding~\eqref{eq:unit_length_constraint_bad}.
Note that this argument does not use $\beta=0$; hence, \eqref{eq:unit_length_constraint_bad} holds for all $\beta\in[0,1]$ with $C_2$ independent of $\beta$, although for $\beta\in(0,1]$ the estimate~\eqref{eq:unit_length_constraint} is sharper.

\emph{Step~3: $\bL^2$-bound and conclusion.} To bound the $\bL^2$-norm of $\mm_h^n$, we proceed as follows. Since $|\mm_h^n|^2$ is convex on each element $K\in\mathcal{T}_h$, it holds that $|\mm_h^n|^2\le\II_h\big[|\mm_h^n|^2\big]$ pointwise in $\Omega$. Together with~\eqref{eq:unit_length_constraint_bad}, which holds for all $\beta\in[0,1]$, this yields
\begin{align*}
    \norm{\mm_h^n}^2
    &\le\norm[L^1(\Omega)]{\II_h\big[|\mm_h^n|^2\big]}
    \le|\Omega|+\norm[L^1(\Omega)]{\II_h\big[|\mm_h^n|^2\big]-1}
    \le|\Omega|+C_2k\lesssim 1.
\end{align*}
Together with~\eqref{eq:stability} with $\norm{\Grad\mm_h^n}^2$ in place of $\norm[\bH^1(\Omega)]{\mm_h^n}^2$, proved in Step~1 for all $\beta\in[0,1]$, this proves~\eqref{eq:stability} and completes the proof.
\end{proof}

\subsection{Proof of Theorem~\ref{thm: main result}}

We are finally in a position to prove convergence
of the time reconstructions toward a weak solution of~\eqref{eq: iLLG system}.

\begin{proof}[Proof of Theorem~\ref{thm: main result}]
The proof is based on the energy method for parabolic PDEs, already employed in the seminal papers~\cite{alouges2008a,bp2006} to prove convergence of the first-order TPS and of an implicit midpoint-type scheme for the LLG equation, respectively.
That proof, however, requires modifications due to the
different time discretization (with ours based on the BDF2 method)
and to the presence of the inertial term, which are inspired by~\cite{afp2026_partI} and~\cite{ruggeri2022numerical}, respectively.
For the sake of clarity,
we split the proof into four steps, each of them verifying one of the four properties of Definition~\ref{def: weak solution}.

\emph{Step~1: Construction of $\mm$ satisfying Definition~\ref{def: weak solution}{\rm(\ref{def: weak solution i})}.}
The stability estimate~\eqref{eq:stability} from Proposition~\ref{prop:stability}
and the norm equivalence
\begin{equation*}
\frac{k}{5}\sum_{j=0}^{n-1}\norm{\vv_{h}^{j+1}}^2
\le
k\sum_{j=0}^{n-1}\norm{\dt\mm_{h}^{j+1}}^2
\le
\frac{9k}{7}\sum_{j=0}^{n-1}\norm{\vv_{h}^{j+1}}^2
\quad
\text{for all } n \ge 2
\end{equation*}
from~\cite[Lemma~2.1]{akrivis2025quadratic}
imply uniform boundedness of the time reconstructions for any choice of $\beta \in [0,1]$.
In particular, $\{\mm_{hk}\}$, $\{\mm_{hk}^\pm\}$, and $\{\widehat{\mm}_{hk}^+\}$ are uniformly bounded in $L^\infty(0,\infty;\bH^1(\Omega))$,
$\{\mm_{hk}\}$ is uniformly bounded also in $\bH^1(\OT)$ for all $T>0$,
while $\{\vv_{hk}^\pm\}$ is uniformly bounded in $L^\infty(0,\infty;\bL^2(\Omega))$.
Moreover, by~\eqref{eq:initial_energy_law} and~\eqref{eq:stability}, it holds that $\norm{\vv_h^i}\le C$ for all $i\ge1$, with $C>0$ independent of $h$, $k$, and $i$. Since $\dt\mm_h^1=\vv_h^1$ and, by~\eqref{eq: bound dt m j+1},
\[
    \norm{\dt\mm_h^{j+1}}
    \le\frac{2}{3}\norm{\vv_h^{j+1}}+\frac{1}{3}\norm{\dt\mm_h^{j}}
    \qquad\text{for all } j\ge1,
\]
an induction on $j$ yields $\norm{\dt\mm_h^{j+1}}\le C$ for all $j\ge0$. Consequently, $\{\partial_t\mm_{hk}\}$ is uniformly bounded in $L^\infty(0,\infty;\bL^2(\Omega))$, i.e., $\{\mm_{hk}\}$ is uniformly bounded in $W^{1,\infty}(0,\infty;\bL^2(\Omega))$.
Hence, with successive extractions of (not relabeled) weakly (resp., weakly-star) convergent subsequences
as guaranteed by the Eberlein--\v{S}mulian (resp., Banach--Alaoglu) theorem,
combined with standard results about continuous and compact embeddings of Sobolev spaces,
we obtain the existence of $\mm\in L^\infty(0,\infty;\bH^1(\Omega))\cap W^{1,\infty}(0,\infty;\bL^2(\Omega))$
such that
$\mm_{hk} \stackrel{*}{\rightharpoonup}\mm$ in $W^{1,\infty}(0,\infty;\bL^2(\Omega))$,
$\mm_{hk},\mm_{hk}^\pm,\widehat{\mm}_{hk}^+\stackrel{*}{\rightharpoonup}\mm$ in $L^\infty(0,\infty;\bH^1(\Omega))$, $\mm_{hk}^\pm,\mhat_{hk}^+\to \mm$ in $\bL^2(\OT)$, 
and $\vv_{hk}^\pm \stackrel{*}{\rightharpoonup} \vv \equiv \mmt$ in $L^{\infty}(0,\infty;\bL^2(\Omega))$,
as well as
$\mm_{hk}\rightharpoonup \mm$ in $\bH^1(\OT)$,
$\mm_{hk}\to \mm$ in $\bL^2(\OT)$,
and $\vv_{hk}^\pm\rightharpoonup \mmt$ in $\bL^2(\OT)$ for all $T>0$,
where all convergences are meant for $(h,k)\to(0,0)$;
see~\cite[Proposition~3.12]{afp2026_partI} and~\cite{ruggeri2022numerical} for all details, and, in particular, for the identification of the abstract limits.
Passing to the limit in~\eqref{eq:unit_length_constraint_bad} (for $\beta=0$) and in~\eqref{eq:unit_length_constraint} (for $\beta \in (0,1]$) as $(h,k)\to(0,0)$
shows that $\lvert \mm \rvert = 1$ a.e.\ in $\Omega \times (0,\infty)$.

\emph{Step~2: The limit $\mm$ satisfies Definition~\ref{def: weak solution}{\rm(\ref{def: weak solution iii})}.}
Let $\beta\in(0,1]$. The proof is split into five substeps, where the core argument is summarized in Substep~2a.

\emph{Substep~2a: Overview.}
Let $T>0$. Let $N \ge 1$ be such that $Nk = T$.
Let \(\vvarphi \in C^\infty_c([0,T); \CC^\infty(\overline{\Omega}))\)
be a smooth test function and let \(j=0,1,\ldots,N-1\).
We extend $\vvarphi$ by zero for $t\ge T$.
Let $\vvarphi_k$ and $\vvarphi_k^-$ denote the time
reconstructions~\eqref{eq:reconstructions} associated with
$\vvarphi(t_j)$.
By the smoothness of $\vvarphi$, we have
\[
    \vvarphi_k^- \to \vvarphi,
    \qquad
    \partial_t\vvarphi_k \to \partial_t\vvarphi
    \quad\text{in }L^\infty(0,T;\bW^{1,\infty}(\Omega))
    \quad\text{as }k\to0.
\]
We choose the test function
\[
    \zzeta^j_h = \boldsymbol{\II}_h[\mhat_{h}^{j+1}\times\vvarphi(t_j)]\in\bT_h(\mhat_h^{j+1}),
\]
where \(\mhat_{h}^{1} = \mm_h^0\) for uniform notation.
Testing the variational
formulations~\eqref{alg1:magnetization_update_first}--\eqref{alg1:magnetization_update}
with \(k\zzeta_h^j\),
we obtain for $j=0$
\begin{equation}\label{eq: first step discrete variational}
    \begin{split}
    \tau k&\inner{\dt\vv_h^1}{\boldsymbol{\II}_h[\mm_h^0 \times \vvarphi(t_0)]}
    + \alpha k \inner{\vv_h^1}{\boldsymbol{\II}_h[\mm_h^0 \times \vvarphi(t_0)]}\\
    &+ k\inner{\mm_h^0 \times \vv_h^1}{\boldsymbol{\II}_h[\mm_h^0 \times \vvarphi(t_0)]}
    = -k \inner{\Grad(\mm_h^0 + k \vv_h^1)}{\Grad\boldsymbol{\II}_h[\mm_h^0 \times \vvarphi(t_0)]}
    \end{split}
\end{equation}
and for \(j=1,\ldots,N-1\) 
\begin{align*}
   \beta \tau k&\inner{\dt\vv_h^{j+1}}{\boldsymbol{\II}_h[\mhat_h^{j+1} \times \vvarphi(t_j)]}
   + (1-\beta)\frac{\tau}{2}\inner{3\vv_h^{j+1}-4\vv_h^j+\vv_h^{j-1}}{\boldsymbol{\II}_h[\mhat_h^{j+1} \times \vvarphi(t_j)]}\\
    &+ \alpha k \inner{\vv_h^{j+1}}{\boldsymbol{\II}_h[\mhat_h^{j+1} \times \vvarphi(t_j)]}
    + k\inner{\mhat_h^{j+1} \times \vv_h^{j+1}}{\boldsymbol{\II}_h[\mhat_h^{j+1} \times \vvarphi(t_j)]}\\
   & = -\frac{k}{3} \inner{\Grad(4\mm_h^j - \mm_h^{j-1} + 2k\vv_h^{j+1})}{\Grad\boldsymbol{\II}_h[\mhat_h^{j+1} \times \vvarphi(t_j)]}.
\end{align*}
Summing these equations over \(j=0,\ldots,N-1\) yields that
\begin{align}
    \mkern-10mu\beta \tau k&\sum_{j=0}^{N-1}\inner{\dt\vv_h^{j+1}}{\boldsymbol{\II}_h[\mhat_h^{j+1} \times \vvarphi(t_j)]}
    + (1-\beta)\frac{\tau}{2}\sum_{j=1}^{N-1}\inner{3\vv_h^{j+1}-4\vv_h^j+\vv_h^{j-1}}{\boldsymbol{\II}_h[\mhat_h^{j+1} \times \vvarphi(t_j)]}\nonumber \\
    &+ (1-\beta) \tau k \inner{\dt\vv_h^1}{\boldsymbol{\II}_h[\mm_h^0\times \vvarphi(t_0)]}
    + \alpha k\sum_{j=0}^{N-1}\inner{\vv_h^{j+1}}{\boldsymbol{\II}_h[\mhat_h^{j+1} \times \vvarphi(t_j)]}\label{eq: discrete_variational_summation}\\
    &+ k\sum_{j=0}^{N-1}\inner{\mhat_h^{j+1} \times \vv_h^{j+1}}{\boldsymbol{\II}_h[\mhat_h^{j+1} \times \vvarphi(t_j)]}
    = -k\sum_{j=0}^{N-1}\inner{\Grad \mm_h^{j+1}}{\Grad\boldsymbol{\II}_h[\mhat_h^{j+1} \times \vvarphi(t_j)]}.\nonumber
\end{align}
In the following Substeps~2b--2e, we will show
that passing~\eqref{eq: discrete_variational_summation} to the limit $(h,k)\to(0,0)$
yields that
\begin{equation}\label{eq:limit_variational_summation}
    \begin{split}
    \int_0^T\inner{\partial_t\mm(t)}{\vvarphi(t)}\dtt
    = - \int_0^T\inner{\heff[\mm(t)]}{\vvarphi(t)\times \mm(t)}\dtt
    + \alpha \int_0^T\inner{\mm(t)\times\partial_t\mm(t)}{\vvarphi(t)}\dtt\\
    -\tau \int_0^T\inner{\mm(t)\times\partial_t\mm(t)}{\partial_t\vvarphi(t)}\dtt
    -\tau \inner{\mm^0\times\vv^0}{\vvarphi(0)}.
    \end{split}
\end{equation}
Overall, we will thus obtain that $\mm$ satisfies~\eqref{eq:limit_variational_summation} for all \(\vvarphi \in C^\infty_c([0,T); \CC^\infty(\overline{\Omega}))\),
and hence, by density,
for all \(\vvarphi \in C^\infty_c([0,T); \bH^1(\Omega))\).
This will prove the validity of Definition~\ref{def: weak solution}(\ref{def: weak solution iii}).

\emph{Substep~2b: Convergence of the three `LLG-terms' of~\eqref{eq: discrete_variational_summation}.}
The term on the right-hand side and the last two terms
on the left-hand side of~\eqref{eq: discrete_variational_summation}
appear also in the convergence proof of the BDF2-based algorithm for the standard LLG equation of~\cite{afp2026_partI}.
Since the convergence results fulfilled by the sequences of time reconstructions in that proof
are satisfied also here, these `LLG-terms' of~\eqref{eq: discrete_variational_summation}
behave in the very same way.
Specifically (see~\cite{afp2026_partI} for further details),
it holds that 
\begin{equation*}
	\begin{split}
		&\alpha k \sum_{j=0}^{N-1}\inner{\vv_h^{j+1}}{\boldsymbol{\II}_h[\widehat{\mm}_h^{j+1}\times\vvarphi(t_j)]}
        =\alpha \int_0^T \inner{\vv_{hk}^+(t)}{\boldsymbol{\II}_h[\widehat{\mm}_{h k}^+(t)\times\vvarphi_k^-(t)]}\dtt\\
		&\xrightarrow{(h,k)\to (0,0)}\alpha \int_0^T \inner{\partial_t\mm(t)}{\mm(t)\times\vvarphi(t)}\dtt
        = -\alpha\int_0^T\inner{\mm(t)\times\partial_t\mm(t)}{\vvarphi(t)}\dtt,
	\end{split}
\end{equation*}
\begin{equation*}
	\begin{split}
		&k\sum_{j=0}^{N-1} \inner{\widehat{\mm}_h^{j+1}\times\vv_h^{j+1}}{\boldsymbol{\II}_h[\widehat{\mm}_h^{j+1}\times\vvarphi(t_j)]}
        =\int_0^T\inner{\widehat{\mm}_{h k}^+(t)\times\vv_{hk}^+(t)}{\boldsymbol{\II}_h[\widehat{\mm}_{h k}^+(t)\times
        \vvarphi_k^-(t)]}\dtt\\
		&\mkern+26mu\xrightarrow{(h,k)\to(0,0)}\int_0^T\inner{\partial_t\mm(t)}{\vvarphi(t)}\dtt,
	\end{split}
\end{equation*}
and
\begin{equation*}
	\begin{split}
		&-k\sum_{j=0}^{N-1}\inner{\Grad\mm_h^{j+1}}{\Grad \boldsymbol{\II}_h[\widehat{\mm}_h^{j+1}\times\vvarphi(t_j)]}
        =-\int_0^T\inner{\Grad\mm_{hk}^+(t)}{\Grad \boldsymbol{\II}_h[\widehat{\mm}_{h k}^+(t)\times \vvarphi_k^-(t)]}\dtt\\
		&\mkern+18mu\xrightarrow{(h,k)\to(0,0)}-\int_0^T\inner{\Grad\mm(t)}{\mm(t)\times \Grad\vvarphi(t)}\dtt
        = -\int_0^T\inner{\heff[\mm(t)]}{\vvarphi(t)\times\mm(t)}\dtt.
	\end{split}
\end{equation*}
To conclude the proof of the variational formulation,
it thus remains to analyze the behavior in the limit of the first three terms on the left-hand side of~\eqref{eq: discrete_variational_summation},
which are those arising from the iLLG-inherent inertial term.

\emph{Substep~2c: Convergence of the first term of~\eqref{eq: discrete_variational_summation}.}
To deal with the first term, we proceed as in~\cite{ruggeri2022numerical}.
Using a summation by parts formula and some algebraic manipulations, \(\vvarphi(t_{N}) = \0\), and \(\mhat_h^1 = \mm_h^0\), we have
\begin{align*}
    \beta \tau &k\sum_{j=0}^{N-1}\inner{\dt\vv_h^{j+1}}{\boldsymbol{\II}_h[\mhat_h^{j+1} \times \vvarphi(t_j)]}
    =\beta \tau \sum_{j=0}^{N-1}\inner{\vv_h^{j+1}-\vv_h^j}{\boldsymbol{\II}_h[\mhat_h^{j+1} \times \vvarphi(t_j)]} \\  
    & = -\beta \tau \sum_{j=0}^{N-1}\inner{\vv_h^{j+1}}{\boldsymbol{\II}_h[\mhat_h^{j+2} \times \vvarphi(t_{j+1}) - \mhat_h^{j+1} \times \vvarphi(t_j)]} \\
    &\mkern+200mu+ \beta \tau \inner{\vv_h^N}{\boldsymbol{\II}_h[\mhat_{h}^{N+1}\times\vvarphi(t_{N})]} - \beta \tau \inner{\vv_h^0}{\boldsymbol{\II}_h[\mhat_{h}^{1}\times\vvarphi(t_{0})]}\\
    &= -\beta \tau \sum_{j=0}^{N-1}\inner{\vv_h^{j+1}}{\boldsymbol{\II}_h[\mhat_h^{j+2} \times \vvarphi(t_{j+1}) - \mhat_h^{j+1} \times \vvarphi(t_{j})]}- \beta \tau \inner{\vv_h^0}{\boldsymbol{\II}_h[\mm_h^0\times\vvarphi(t_{0})]}\\
    &= -\beta \tau k\sum_{j=0}^{N-1}\inner{\vv_h^{j+1}}{\boldsymbol{\II}_h[\mhat_h^{j+1} \times \dt\vvarphi(t_{j+1})]}
    -\beta \tau k\sum_{j=0}^{N-1}\inner{\vv_h^{j+1}}{\boldsymbol{\II}_h[\dt\mhat_h^{j+2} \times\vvarphi(t_{j+1})]}\\
    &\mkern+400mu-\beta \tau \inner{\vv_h^0}{\boldsymbol{\II}_h[\mm_{h}^{0}\times\vvarphi(t_{0})]}.
\end{align*}
As in the LLG case (see \cite{afp2026_partI}), standard interpolation estimates allow one to remove the nodal interpolants on the right-hand side at the cost of a remainder $o(1)$ as $(h,k)\to(0,0)$; see, e.g.,~\cite{afp2026_partI,alouges2008a}. Hence, using that $\aaa \cdot (\aaa \times \bb) = 0$, it holds that
\begin{equation}\label{eq:convergence_term_beta}
    \begin{split}
    \beta \tau k\sum_{j=0}^{N-1}\inner{\dt\vv_h^{j+1}}{\boldsymbol{\II}_h[\mhat_h^{j+1} \times \vvarphi(t_j)]}
    &= -\beta \tau k\sum_{j=0}^{N-1}\inner{\vv_h^{j+1}}{\mhat_h^{j+1} \times \dt\vvarphi(t_{j+1})}\\
    &\quad-\beta \tau k\sum_{j=0}^{N-1}\inner{\vv_h^{j+1}}{(\dt\mhat_h^{j+2} -\vv_h^{j+1})\times\vvarphi(t_{j+1})}\\
    & \quad-\beta \tau  \inner{\vv_h^0}{\mm_{h}^0\times\vvarphi(t_{0})}+o(1).
    \end{split}
\end{equation}
The first term of the right-hand side of~\eqref{eq:convergence_term_beta} converges to
\begin{equation*}
	\begin{split}
        -\beta \tau k& \sum_{j = 0}^{N-1}\inner{\vv_h^{j+1}}{\mhat_h^{j+1} \times \dt\vvarphi(t_{j+1})}
        = -\beta \tau \int_0^T\inner{\vv_{hk}^+(t)}{\widehat{\mm}_{hk}^+(t)\times \partial_t\vvarphi_k(t)}\dtt\\
        &\xrightarrow{(h,k)\to(0,0)}
        -\beta \tau \int_0^T\inner{\partial_t\mm(t)}{\mm(t)\times \partial_t\vvarphi(t)}\dtt
        = \beta \tau \int_{0}^{T}\inner{\mm(t)\times\partial_t\mm(t)}{\partial_t\vvarphi(t)}\dtt.
	\end{split}
\end{equation*}
Next, we deal with the second term of the right-hand side of~\eqref{eq:convergence_term_beta}.
For \(j=0\), we get \(\dt\mhat_h^{2}=2\vv_h^1\). Thus, it holds that
\[
    \inner{\vv_h^{1}}{(\dt\mhat_h^{2} -\vv_h^{1})\times\vvarphi(t_{1})}
        = \inner{\vv_h^{1}}{\vv_h^{1}\times\vvarphi(t_{1})}
        = 0,
\]
so that we can ignore the term with \(j=0\) in the second sum.
For $j\ge 1$, notice that 
\[
    \dt\mhat_h^{j+2} -\vv_h^{j+1} = \frac{k}{2}\dt^2\mm_h^{j+1}.
\]
Thus, we have that
\[
   - k\sum_{j=1}^{N-1}\inner{\vv_h^{j+1}}{(\dt\mhat_h^{j+2} -\vv_h^{j+1})\times\vvarphi(t_{j+1})}
   = - \frac{k^2}{2}\sum_{j=1}^{N-1}\inner{\vv_h^{j+1}}{\dt^2\mm_h^{j+1}\times\vvarphi(t_{j+1})}.
\]
Applying the weighted Young inequality
(with \(\varepsilon = k^{1/2}\)), it follows that
\[
    |\inner{\vv_h^{j+1}}{\dt^2\mm_h^{j+1}\times\vvarphi(t_{j+1})}|
    \leq \frac{\norm{\vv_h^{j+1}}^2}{2k^{1/2}}
    + k^{1/2}\frac{\norm{\dt^2\mm_h^{j+1}\times\vvarphi(t_{j+1})}^2}{2}
\]
and hence
\begin{align}\label{eq:term_to_0}
    &\Big| k\sum_{j=1}^{N-1}\inner{\vv_h^{j+1}}{(\dt\mhat_h^{j+2} -\vv_h^{j+1})\times\vvarphi(t_{j+1})}\Big|
    = \Big | \frac{k^2}{2}\sum_{j=1}^{N-1}\inner{\vv_h^{j+1}}{\dt^2\mm_h^{j+1}\times\vvarphi(t_{j+1})}\Big| \\
    &\leq \frac{k^{3/2}}{4}\sum_{j=1}^{N-1}\norm{\vv_h^{j+1}}^2
    \mkern-8mu+ \frac{k^{5/2}}{4}\sum_{j=1}^{N-1}\norm{\dt^2\mm_h^{j+1}\times\vvarphi(t_{j+1})}^2\!\stackrel{\eqref{eq:stability}}{\lesssim} \! k^{1/2}
    + k^{1/2}\norm[\bL^\infty(\OT)]{\vvarphi}^2 \xrightarrow{(h,k)\to(0,0)}0.  \nonumber  
\end{align}
Finally, the last term of the right-hand side of~\eqref{eq:convergence_term_beta} converges to
\[
    -\beta \tau \inner{\vv_h^0}{\mm_{h}^{0}\times\vvarphi(t_{0})}
    \xrightarrow{(h,k)\to(0,0)} -\beta \tau \inner{\vv^0}{\mm^0\times\vvarphi(0)}.
\]
Thus we have shown that
\begin{equation}\label{eq:convergence_first_term}
    \begin{split}
    \beta \tau k\sum_{j=0}^{N-1}\inner{\dt\vv_h^{j+1}}{\boldsymbol{\II}_h[\mhat_h^{j+1} \times \vvarphi(t_j)]}
    \xrightarrow{(h,k)\to(0,0)} &\beta \tau \int_0^T\inner{\mm(t)\times\partial_t\mm(t)}{\partial_t\vvarphi(t)}\dtt \\
    &- \beta \tau \inner{\vv^0}{\mm^0\times\vvarphi(0)}.
    \end{split}
\end{equation}
This concludes the proof of convergence for the
first term of~\eqref{eq: discrete_variational_summation}.

\emph{Substep~2d: Convergence of the second and third terms of~\eqref{eq: discrete_variational_summation}.}
To treat the second and third terms of~\eqref{eq: discrete_variational_summation},
we notice that, for $j\ge1$,
\[
    \frac{3}{2}\vv_h^{j+1}-2\vv_h^j+\frac{1}{2}\vv_h^{j-1}
    = k\dt\vv_h^{j+1}+\frac{k}{2}\big(\dt\vv_h^{j+1}-\dt\vv_h^{j}\big).
\]
Since $\mhat_h^1=\mm_h^0$, we can rewrite the second and third terms as
\begin{equation}\label{eq:second_term_split}
\begin{split}
    &(1-\beta)\tau\Big[\sum_{j=1}^{N-1}\Big\langle\frac{3}{2}\vv_h^{j+1}-2\vv_h^j+\frac{1}{2}\vv_h^{j-1},\boldsymbol{\II}_h[\mhat_h^{j+1}\times\vvarphi(t_j)]\Big\rangle
    + k\inner{\dt\vv_h^{1}}{\boldsymbol{\II}_h[\mm_h^{0}\times\vvarphi(t_0)]}\Big]\\
    =&(1-\beta)\tau k\Big[\sum_{j=0}^{N-1}\inner{\dt\vv_h^{j+1}}{\boldsymbol{\II}_h[\mhat_h^{j+1}\times\vvarphi(t_j)]}+\frac{1}{2}\sum_{j=1}^{N-1}\inner{\dt\vv_h^{j+1}-\dt\vv_h^{j}}{\boldsymbol{\II}_h[\mhat_h^{j+1}\times\vvarphi(t_j)]}\Big].
\end{split}
\end{equation}
Up to the factor $1-\beta$ instead of $\beta$, the first term on the right-hand side of~\eqref{eq:second_term_split} coincides with the term discussed in Substep~2c.
Hence, by~\eqref{eq:convergence_first_term}, it converges to
\[
    (1-\beta)\tau\int_0^T\inner{\mm(t)\times\partial_t\mm(t)}{\partial_t\vvarphi(t)}\dtt
    -(1-\beta)\tau\inner{\vv^0}{\mm^0\times\vvarphi(0)}.
\]
It remains to show that the second term on the right-hand side of~\eqref{eq:second_term_split} vanishes in the limit.
Using a summation by parts formula, \(\vvarphi(t_{N})=\0\), and \(\mhat_h^1=\mm_h^0\), we obtain
\begin{align}
    \frac{(1-\beta)\tau k}{2}&\sum_{j=1}^{N-1}\inner{\dt\vv_h^{j+1}-\dt\vv_h^{j}}{\boldsymbol{\II}_h[\mhat_h^{j+1}\times\vvarphi(t_j)]}\nonumber\\
    &=-\frac{(1-\beta)\tau k}{2}\sum_{j=0}^{N-1}\inner{\dt\vv_h^{j+1}}{\boldsymbol{\II}_h[\mhat_h^{j+2}\times\vvarphi(t_{j+1})-\mhat_h^{j+1}\times\vvarphi(t_j)]}\nonumber\\
    &\quad-\frac{(1-\beta)\tau k}{2}\inner{\dt\vv_h^{1}}{\boldsymbol{\II}_h[\mm_h^{0}\times\vvarphi(t_0)]}.\label{eq:sbp_second_term}
\end{align}
To deal with the first term on the right-hand side, note that
\[
    \mhat_h^{j+2}\times\vvarphi(t_{j+1})-\mhat_h^{j+1}\times\vvarphi(t_j)
    =k\,\dt\mhat_h^{j+2}\times\vvarphi(t_{j+1})+k\,\mhat_h^{j+1}\times\dt\vvarphi(t_{j+1}).
\]
Since, on shape-regular meshes, the $\bL^2$-norm of a discrete function is equivalent to the weighted $\ell^2$-norm of its nodal values, it follows that
\[
    \norm{\boldsymbol{\II}_h[\dt\mhat_h^{j+2}\times\vvarphi(t_{j+1})]}\lesssim\norm{\dt\mhat_h^{j+2}}
    \quad\text{and}\quad
    \norm{\boldsymbol{\II}_h[\mhat_h^{j+1}\times\dt\vvarphi(t_{j+1})]}\lesssim\norm{\mhat_h^{j+1}},
\]
where the hidden constants depend only on $\vvarphi$ and the shape regularity of $\T_h$.
Moreover, since $\mhat_h^1=\mm_h^0$ and $\mhat_h^{j+1}=2\mm_h^j-\mm_h^{j-1}$ for $j\ge1$, it holds that
\[
    \dt\mhat_h^{2}=\frac{(2\mm_h^1-\mm_h^0)-\mm_h^0}{k}=2\dt\mm_h^1
    \quad\text{and}\quad
    \dt\mhat_h^{j+2}=2\dt\mm_h^{j+1}-\dt\mm_h^{j}
    \quad\text{for } j\ge1.
\]
By Step~1, $\norm{\dt\mm_h^{j+1}}\lesssim1$ for all $j\ge0$, and hence $\norm{\dt\mhat_h^{j+2}}\lesssim1$ for all $j\ge0$.
Similarly, $\norm{\mm_h^j}\lesssim1$ for all $j\ge0$ by~\eqref{enum: A1} (for $j=0$),~\eqref{eq:initial_energy_law} together with $\mm_h^1=\mm_h^0+k\vv_h^1$ (for $j=1$), and~\eqref{eq:stability} (for $j\ge2$), which implies $\norm{\mhat_h^{j+1}}\lesssim1$ for all $j\ge0$.
This allows us to conclude that
\[
    k\sum_{j=0}^{N-1}\big(\norm{\dt\mhat_h^{j+2}}+\norm{\mhat_h^{j+1}}\big)^2
    \lesssim Nk = T.
\]
With the Cauchy--Schwarz inequality we thus obtain
\begin{align*}
    \frac{(1-\beta)\tau k}{2}&\Big|\sum_{j=0}^{N-1}\inner{\dt\vv_h^{j+1}}{\boldsymbol{\II}_h[\mhat_h^{j+2}\times\vvarphi(t_{j+1})-\mhat_h^{j+1}\times\vvarphi(t_j)]}\Big|\\
    &\lesssim k^2\sum_{j=0}^{N-1}\norm{\dt\vv_h^{j+1}}\big(\norm{\dt\mhat_h^{j+2}}+\norm{\mhat_h^{j+1}}\big)\\
    &\le k^{1/2}\Big(k^2\sum_{j=0}^{N-1}\norm{\dt\vv_h^{j+1}}^2\Big)^{1/2}\Big(k\sum_{j=0}^{N-1}\big(\norm{\dt\mhat_h^{j+2}}+\norm{\mhat_h^{j+1}}\big)^2\Big)^{1/2}\\
    &\mkern-5mu\stackrel{\eqref{eq:stability}}{\lesssim} k^{1/2}\xrightarrow{(h,k)\to(0,0)}0,
\end{align*}
where the hidden constant depends also on $\beta$, but not on $h$ and $k$.
To conclude, we still need to consider the last term on the right-hand side of the summation by parts~\eqref{eq:sbp_second_term} above, i.e., we must show that
\begin{equation}\label{eq:convergence_second_term_third}
    -\frac{(1-\beta) \tau k}{2}  \inner{\dt\vv_h^{1}}{\boldsymbol{\II}_h[\mm_h^{0} \times \vvarphi(t_{0})]}\xrightarrow{(h,k)\to(0,0)} 0.
\end{equation}
Indeed, recall the discrete variational formulation at the first time step~\eqref{eq: first step discrete variational}
    \begin{equation*}
        \tau k \inner{\dt\vv_h^1}{\zzeta_h^0}=
        - \alpha k \inner{\vv_h^1}{\zzeta_h^0}
        - k \inner{\mm_{h}^{0}\times \vv_h^1}{\zzeta_h^0}
        - k \inner{\Grad\mm_h^1}{\Grad \zzeta_h^0}.
    \end{equation*}
Moreover, recall that the energy estimate~\eqref{eq:initial_energy_law} guarantees that
$\norm{\vv_h^1} \lesssim1$ and $\norm{\Grad\mm_h^1} \lesssim 1$. Since 
$\mm_h^0$ assumes unit length at the nodes, we also have that $\norm[\bL^\infty(\Omega)]{\mm_h^0} \le 1$.
Finally, from the $\bH^1$-uniform boundedness of $\mm_h^0$ and the smoothness of $\vvarphi(t_0)$ it holds that $\norm[\bH^1(\Omega)]{\zzeta_h^0} = \norm[\bH^1(\Omega)]{\boldsymbol{\II}_h[\mm_h^0\times \vvarphi(t_0)]} \lesssim 1$. This implies that 
\begin{equation}\label{eq:first_step_test_bound}
    \begin{split}
        \tau k |\inner{\dt\vv_h^1}{\boldsymbol{\II}_h[\mm_h^0\times \vvarphi(t_0)]}|
        &\lesssim k \norm{\vv_h^1}\norm{\zzeta_h^0} + k \norm[\bL^\infty(\Omega)]{\mm_h^0}\norm{\vv_h^1}\norm{\zzeta_h^0} + k \norm{\Grad\mm_h^1}\norm{\Grad\zzeta_h^0}\\
        &\lesssim k \xrightarrow{(h,k)\to(0,0)} 0.
    \end{split}
\end{equation}
Combining~\eqref{eq:second_term_split}--\eqref{eq:first_step_test_bound}, we obtain that
\begin{equation}\label{eq:convergence_second_term}
\begin{split}
    (1-\beta)&\tau\sum_{j=1}^{N-1}\Big\langle\frac{3}{2}\vv_h^{j+1}-2\vv_h^j+\frac{1}{2}\vv_h^{j-1},\boldsymbol{\II}_h[\mhat_h^{j+1}\times\vvarphi(t_j)]\Big\rangle+(1-\beta)\tau k\inner{\dt\vv_h^{1}}{\boldsymbol{\II}_h[\mm_h^{0}\times\vvarphi(t_0)]}\\
    &\xrightarrow{(h,k)\to(0,0)}(1-\beta)\tau\int_0^T\inner{\mm(t)\times\partial_t\mm(t)}{\partial_t\vvarphi(t)}\dtt
    -(1-\beta)\tau\inner{\vv^0}{\mm^0\times\vvarphi(0)}.
\end{split}
\end{equation}

\emph{Substep~2e: Proof of~\eqref{eq:limit_variational_summation}.}
Combining the results of the previous substeps, in particular~\eqref{eq:convergence_first_term} and~\eqref{eq:convergence_second_term},
we obtain that
\begin{align*}
     &\mkern-10mu\beta \tau k\sum_{j=0}^{N-1}\inner{\dt\vv_h^{j+1}}{\boldsymbol{\II}_h[\mhat_h^{j+1} \times \vvarphi(t_j)]}
     + (1-\beta)\tau\sum_{j=1}^{N-1}\big\langle\frac{3}{2}\vv_h^{j+1}-2\vv_h^j+\frac{1}{2}\vv_h^{j-1},\boldsymbol{\II}_h[\mhat_h^{j+1} \times \vvarphi(t_j)]\big\rangle \nonumber\\
     &\mkern+380mu + (1-\beta) \tau k \inner{\dt\vv_h^{1}}{\boldsymbol{\II}_h[\mm_h^{0} \times \vvarphi(t_0)]}\nonumber\\
     &\mkern+55mu\xrightarrow{(h,k)\to(0,0)} \beta \tau \int_0^T\inner{\mm(t)\times\partial_t\mm(t)}{\partial_t\vvarphi(t)}\dtt
     - \beta \tau \inner{\vv^0}{\mm^0\times\vvarphi(0)}\label{eq:convergence_discrete_v_term}\\
     &\mkern+60mu+ (1-\beta) \tau \int_0^T\inner{\mm(t)\times\partial_t\mm(t)}{\partial_t\vvarphi(t)}\dtt
     - (1-\beta) \tau \inner{\vv^0}{\mm^0\times\vvarphi(0)}\nonumber\\
     &\mkern+60mu= \tau \int_0^T\inner{\mm(t)\times\partial_t\mm(t)}{\partial_t\vvarphi(t)}\dtt
     - \tau \inner{\vv^0}{\mm^0\times\vvarphi(0)}.\nonumber
\end{align*}
This shows that the discrete variational formulation~\eqref{eq: discrete_variational_summation}
converges to its continuous counterpart~\eqref{eq:limit_variational_summation} as \((h,k)\to(0,0)\).
As discussed in Substep~2a, this concludes the proof that the limit $\mm$ satisfies Definition~\ref{def: weak solution}(\ref{def: weak solution iii}).

\emph{Step~3: The limit $\mm$ satisfies Definition~\ref{def: weak solution}{\rm(\ref{def: weak solution iv})}.}
The proof is split into four substeps.

\emph{Substep~3a: Discrete energy inequality.}
Following the energy argument of~\cite{afp2026_PartIII}, we use the discrete energy law in the form~\eqref{eq: bounds_intermediate}, which was derived in the proof of Proposition~\ref{prop:stability}, with $\mathcal{J}_{G,h}^n$ and $\varrho_1$ defined therein.
Dropping the nonnegative terms on its left-hand side, we obtain
\[
    \mathcal{J}_{G,h}^n
    + \alpha k\sum_{j=0}^{n-1}\norm{\vv_h^{j+1}}^2
    \le \mathcal{J}[\mm_h^0,\vv_h^0]+\varrho_1
    \qquad\text{for }n\ge 2.
\]
In addition, notice that the same inequality holds for $n=1$, since
\eqref{eq:initial_energy_law} gives
\[
    \mathcal{J}_{G,h}^1+\alpha k\norm{\vv_h^1}^2
    =
    \mathcal{J}[\mm_h^0,\vv_h^0]+\varrho_1
    -\frac{\beta\tau k^2}{2}\norm{\dt\vv_h^1}^2.
\]
For $t'\in[0,T)$, let $n=n(t')$ be the unique index such that
$t'\in[t_{n-1},t_n)$.
Since $t'\le t_n$ and the dissipation is nonnegative,
we obtain
\begin{equation}\label{eq:discrete_energy_inequality}
    \mathcal{J}_{G,h}^{n(t')}
    +\alpha\int_0^{t'}\norm{\vv_{hk}^+(t)}^2\dtt
    \le \mathcal{J}[\mm_{hk}^-(0),\vv_{hk}^-(0)]+\varrho_1.
\end{equation}

\emph{Substep~3b: Passage to the limit.}
Let $I\subseteq[0,T]$ be measurable.
For $t'\in[t_{n-1},t_n)$, the pairs in
$\mathcal{J}_{G,h}^{n(t')}$ are given by
$(\mm_{hk}^+(t'),\mm_{hk}^-(t'))$ and
$(\vv_{hk}^+(t'),\vv_{hk}^-(t'))$; therefore, thanks to~\eqref{eq:G-norm}, it holds that
\begin{align*}
\mathcal{J}_{G,h}^{n(t')} &= \frac{1}{4}\big(\norm{\Grad \mm_{hk}^{+}(t')}^2 + \norm{2\Grad \mm_{hk}^{+}(t')-\Grad \mm_{hk}^{-}(t')}^2\big)\\
&\quad
+\frac{\beta \tau}{2}{\norm{\vv_{hk}^+(t')}^2}+\frac{1}{4}(1-\beta) \tau\big(\norm{\vv_{hk}^+(t')}^2+\norm{2\vv_{hk}^+(t')-\vv_{hk}^-(t')}^2\big).
\end{align*}
By Step~1,
\[
    \mm_{hk}^{\pm}\rightharpoonup\mm
    \quad\text{in }L^2(0,T;\bH^1(\Omega))
    \qquad\text{and}\qquad
    \vv_{hk}^{\pm}\rightharpoonup\mmt
    \quad\text{in }\bL^2(\OT).
\]
Hence, weak lower semicontinuity shows
\begin{equation}\label{eq:weak_lower_semicontinuity}
    \int_I\mathcal{J}[\mm(t'),\mmt(t')]\dtt'
    \le
    \liminf_{(h,k)\to(0,0)}
    \int_I\mathcal{J}_{G,h}^{n(t')}\dtt'.
\end{equation}
The strong convergence of the initial data from
assumptions~\eqref{enum: A1}--\eqref{enum: A2} gives
\begin{equation}\label{eq:convergence_initial_data}
   \mathcal{J}[\mm_{hk}^-(0),\vv_{hk}^-(0)]=\mathcal{J}[\mm_h^0,\vv_h^0]
    \longrightarrow
    \mathcal{J}[\mm^0,\vv^0].
\end{equation}
Finally, by convergence of $\vv_{hk}^+\rightharpoonup\partial_t\mm$ in $\bL^2(\OT)$,
weak lower semicontinuity on $(0,t')$
and Fatou's lemma yield
\begin{equation}\label{eq:convergence_signed_terms}
    \int_I\int_0^{t'}\norm{\mmt(t)}^2\dtt\,\dtt'
    \le
    \liminf_{(h,k)\to(0,0)}
    \int_I\int_0^{t'}\norm{\vv_{hk}^+(t)}^2\dtt\,\dtt'.
\end{equation}

\emph{Substep~3c: Control of the initial correction term.}
To deal with the term $\varrho_1$ on the right-hand side of~\eqref{eq:discrete_energy_inequality},
it is enough to prove that
\begin{equation}\label{eq: goal initial step}
\limsup_{(h,k)\to(0,0)}\varrho_1 \le 0.
\end{equation}
To this end, we insert the first-step energy identity~\eqref{eq:initial_energy_law} into the definition
of $\varrho_1$ to obtain
\begin{equation*}
    \begin{split}
        \varrho_1
        &= - \frac{\beta \tau}{4}
        (\norm{\vv_h^1}^2 - \norm{\vv_h^0}^2)
        -\frac{\tau}{4}\norm{\vv_h^1-\vv_h^0}^2
        - \frac{1}{4}\norm{\Grad \mm_h^1 - \Grad \mm_h^0}^2
        - \frac{\alpha k}{2} \norm{\vv_h^1}^2\\
        &\le \frac{\beta \tau}{4}
        (\norm{\vv_h^0}^2 - \norm{\vv_h^1}^2).
    \end{split}
\end{equation*}
Since $\vv_h^0\to\vv^0$ strongly in $\bL^2(\Omega)$,
it suffices to prove that
$
\liminf_{(h,k)\to(0,0)}\norm{\vv_h^1}^2 \ge \norm{\vv^0}^2.
$
Indeed, if this holds, then
\[
    \limsup_{(h,k)\to(0,0)}\varrho_1
    \le \frac{\beta \tau}{4}
    \big(\norm{\vv^0}^2
    - \liminf_{(h,k)\to(0,0)} \norm{\vv_h^1}^2\big)
    \le \frac{\beta \tau}{4}
    \big(\norm{\vv^0}^2 - \norm{\vv^0}^2\big)
    = 0,
\]
and \eqref{eq: goal initial step} follows.
To do so, choose a subsequence along which $\norm{\vv_h^1}^2$ converges to $\liminf_{(h,k)\to(0,0)}\norm{\vv_h^1}^2$.
Since $\vv_h^1$ is uniformly bounded in $\bL^2(\Omega)$ by~\eqref{eq:initial_energy_law}, we may extract a further subsequence (not relabeled) such that $\vv_h^1\rightharpoonup\overline{\vv}$ in $\bL^2(\Omega)$ for some $\overline{\vv}\in\bL^2(\Omega)$.
To obtain the required lower bound, we consider again
the discrete variational formulation at the first time
step~\eqref{eq: first step discrete variational}
\begin{equation*}
    \tau k \inner{\dt\vv_h^1}{\zzeta_h^0}
    =
    \tau \inner{\vv_h^1 - \vv_h^0}{\zzeta_h^0}
    =
    - \alpha k \inner{\vv_h^1}{\zzeta_h^0}
    - k \inner{\mm_h^0\times \vv_h^1}{\zzeta_h^0}
    - k \inner{\Grad\mm_h^1}{\Grad\zzeta_h^0}.
\end{equation*}
Passing to the limit as in~\eqref{eq:first_step_test_bound}, thanks to the strong
convergence of
$\zzeta_h^0 = \boldsymbol{\II}_h[\mm_h^0\times \vvarphi(t_0)]$
to $\mm^0\times \vvarphi(0)$ in $\bL^2(\Omega)$,
see \cite{afp2026_partI} for details, we obtain
\[
    \inner{\overline{\vv}-\vv^0}{\mm^0\times \vvarphi(0)}
    = 0
    \quad \text{for all }
    \vvarphi\in C^\infty_c([0,T);\CC^\infty(\overline{\Omega})).
\]
Since $\vvarphi(0)$ is arbitrary, this yields $\mm^0\times(\overline{\vv}-\vv^0)=\0$ a.e.\ in $\Omega$.
Taking the cross product with $\mm^0$ and using the vector triple-product identity $\aaa\times(\bb\times\boldsymbol{c})=(\aaa\cdot\boldsymbol{c})\bb-(\aaa\cdot\bb)\boldsymbol{c}$ together with $|\mm^0|=1$, we get
\[
    \0=\mm^0\times\big(\mm^0\times(\overline{\vv}-\vv^0)\big)
    =\big(\mm^0\cdot(\overline{\vv}-\vv^0)\big)\mm^0-(\overline{\vv}-\vv^0),
\]
i.e., $\overline{\vv}-\vv^0=\big(\mm^0\cdot(\overline{\vv}-\vv^0)\big)\mm^0$.
Since $\mm^0\cdot\vv^0=0$, this implies $(\overline{\vv}-\vv^0)\cdot\vv^0=0$.
Consequently,
\[
    \norm{\overline{\vv}}^2
    = \norm{\overline{\vv}-\vv^0}^2 + \norm{\vv^0}^2
    \ge \norm{\vv^0}^2.
\]
By the choice of the subsequence and weak lower semicontinuity,
we obtain
\[
    \liminf_{(h,k)\to(0,0)}\norm{\vv_h^1}^2
    \ge \norm{\overline{\vv}}^2
    \ge \norm{\vv^0}^2.
\]
This concludes the proof of~\eqref{eq: goal initial step}.

\emph{Substep~3d: Continuous energy inequality.}
Let $I\subseteq[0,T]$ be measurable.
Integrating~\eqref{eq:discrete_energy_inequality} over $I$
and using~\eqref{eq:weak_lower_semicontinuity}--\eqref{eq: goal initial step}, we obtain
\begin{align*}
    &\int_I\Big[
        \mathcal{J}[\mm(t'),\mmt(t')]
        +\alpha\int_0^{t'}\norm{\mmt(t)}^2\dtt
    \Big]\dtt'
    \\
    &\quad\le
    \liminf_{(h,k)\to(0,0)}
    \int_I\Big[
        \mathcal{J}_{G,h}^{n(t')}
        +\alpha\int_0^{t'}\norm{\vv_{hk}^+(t)}^2\dtt
    \Big]\dtt'
    \\
    &\quad\le
    \limsup_{(h,k)\to(0,0)}
    \int_I\Big[
        \mathcal{J}[\mm_h^0,\vv_h^0]+\varrho_1
    \Big]\dtt'
    \le
    \int_I\mathcal{J}[\mm^0,\vv^0]\dtt'.
\end{align*}
Since $I$ is arbitrary, this proves
\[
    \mathcal{J}[\mm(t'),\mmt(t')]
    +\alpha\int_0^{t'}\norm{\mmt(t)}^2\dtt
    \le \mathcal{J}[\mm^0,\vv^0]
    \qquad\text{for a.e.\ }t'\in[0,T].
\]
This shows that the limit $\mm$ satisfies the energy inequality from Definition~\ref{def: weak solution}(\ref{def: weak solution iv}) for almost all $t'\in[0,T]$.

To extend this to all times $t\in[0,T]$, we need pointwise-in-time values of $\mmt$. To this end, we first show that $\mmt$ admits a representative that is weakly continuous in time with values in $\bL^2(\Omega)$.

Let $\ppsi\in\bH^1(\Omega)$ and $\eta\in C^\infty_c(0,T)$.
Testing~\eqref{eq:weak_solution var_formulation} with $\vvarphi=\eta\ppsi$ (so that $\vvarphi(0)=\0$, $\partial_t\vvarphi=\eta'\ppsi$, and $\Grad\vvarphi=\eta\Grad\ppsi$) and recalling that $\inner{\heff[\mm]}{\vvarphi\times\mm} = \inner{\Grad\mm}{\mm\times\Grad\vvarphi}$, we obtain
\begin{equation*}
    \begin{split}
    \tau\int_0^T\eta'(t)\,\inner{\mm(t)\times\mmt(t)}{\ppsi}\dtt
    &= -\int_0^T\eta(t)\Big[\inner{\mmt(t)}{\ppsi}
    + \inner{\Grad\mm(t)}{\mm(t)\times\Grad\ppsi}\\
    &\qquad\qquad- \alpha\inner{\mm(t)\times\mmt(t)}{\ppsi}\Big]\dtt.
    \end{split}
\end{equation*}
Since $\eta\in C^\infty_c(0,T)$ is arbitrary, this states, by definition of the weak derivative, that the real-valued function $t\mapsto\inner{\mm(t)\times\mmt(t)}{\ppsi}$ is weakly differentiable with
\begin{equation}\label{eq:weak_derivative}
    \tau\frac{\mathrm{d}}{\mathrm{d}t}\inner{\mm(t)\times\mmt(t)}{\ppsi}
    = \inner{\mmt(t)}{\ppsi}
    + \inner{\Grad\mm(t)}{\mm(t)\times\Grad\ppsi}
    - \alpha\inner{\mm(t)\times\mmt(t)}{\ppsi}
\end{equation}
for almost every $t\in(0,T)$.
We now bound the right-hand side of~\eqref{eq:weak_derivative}.

By Step~1, $\mm\in L^\infty(0,T;\bH^1(\Omega))\cap W^{1,\infty}(0,T;\bL^2(\Omega))$; in particular, for a.e.\ $t\in(0,T)$,
\begin{equation*}
    \norm{\Grad\mm(t)}\le\norm[L^\infty(0,T;\bH^1(\Omega))]{\mm}
    \quad\text{and}\quad
    \norm{\mmt(t)}\le\norm[L^\infty(0,T;\bL^2(\Omega))]{\mmt}.
\end{equation*}
Moreover, since $|\mm|=1$ a.e.\ in $\OT$, the Cauchy--Schwarz inequality yields
\begin{align*}
    \tau\Big|\frac{\mathrm{d}}{\mathrm{d}t}\inner{\mm(t)\times\mmt(t)}{\ppsi}\Big|
    &\le (1+\alpha)\norm{\mmt(t)}\norm{\ppsi} + \norm{\Grad\mm(t)}\norm{\Grad\ppsi}\\
    &\le C\norm[\bH^1(\Omega)]{\ppsi},
\end{align*}
where $C\coloneqq(1+\alpha)\norm[L^\infty(0,T;\bL^2(\Omega))]{\mmt}+\norm[L^\infty(0,T;\bH^1(\Omega))]{\mm}$ is independent of $t$ and $\ppsi$.
This shows that the weak derivative of $t\mapsto\inner{\mm(t)\times\mmt(t)}{\ppsi}$ is bounded by $C\norm[\bH^1(\Omega)]{\ppsi}/\tau$ for a.e.\ $t\in(0,T)$.
By the fundamental theorem of calculus for weakly differentiable functions of one variable, this implies that, up to modification on a null set, $t\mapsto\inner{\mm(t)\times\mmt(t)}{\ppsi}$ is Lipschitz continuous. Since this bound is uniform with respect to $\ppsi\in\bH^1(\Omega)$, we conclude that
\[
    \mm\times\mmt\in W^{1,\infty}(0,T;\bH^1(\Omega)^*);
\]
in particular, up to modification on a null set, $\mm\times\mmt$ is Lipschitz continuous, and hence weakly continuous, in time with values in $\bH^1(\Omega)^*$.
Since, moreover, $\mm\times\mmt\in L^\infty(0,T;\bL^2(\Omega))$ and $\bL^2(\Omega)\hookrightarrow\bH^1(\Omega)^*$ continuously, Lemma~\ref{lem:temam}, applied with $X=\bL^2(\Omega)$ and $Y=\bH^1(\Omega)^*$, shows that $\mm\times\mmt$ is weakly continuous in time with values in $\bL^2(\Omega)$.

It remains to transfer this property from $\mm\times\mmt$ to $\mmt$.
Since $\mm\cdot\mmt=0$ and $|\mm|=1$ a.e.\ in $\OT$, by the vector triple-product identity
\begin{equation*}
    \mmt = -\mm\times(\mm\times\mmt)
    \quad\text{a.e.\ in } \OT.
\end{equation*}
Let $\ppsi\in\bL^\infty(\Omega)$.
Then, the identity $(\boldsymbol{a}\times\boldsymbol{b})\cdot\boldsymbol{c}=\boldsymbol{b}\cdot(\boldsymbol{c}\times\boldsymbol{a})$ gives
\begin{equation*}
    \inner{\mm\times(\mm\times\mmt)}{\ppsi} = \inner{\mm\times\mmt}{\ppsi\times\mm}.
\end{equation*}
Since $\mm\in W^{1,\infty}(0,T;\bL^2(\Omega))$ is Lipschitz continuous in time with values in $\bL^2(\Omega)$, so is $\ppsi\times\mm$: for all $s,t\in[0,T]$, it holds that
\begin{align*}
\norm{\ppsi\times\mm(t)-\ppsi\times\mm(s)}
&\le\norm[\bL^\infty(\Omega)]{\ppsi}\norm{\mm(t)-\mm(s)}\\
&\le\norm[\bL^\infty(\Omega)]{\ppsi}\norm[L^\infty(0,T;\bL^2(\Omega))]{\mmt}\,|t-s|.
\end{align*}
In particular, $\ppsi\times\mm$ is strongly continuous in time with values in $\bL^2(\Omega)$.
Since the inner product of a weakly convergent and a strongly convergent sequence converges, the map $t\mapsto\inner{\mm\times(\mm\times\mmt)}{\ppsi}$ is continuous.
By density of $\bL^\infty(\Omega)$ in $\bL^2(\Omega)$, and since all functions involved are bounded in $\bL^2(\Omega)$ uniformly in time, the same holds for all $\ppsi\in\bL^2(\Omega)$.
Hence, $-\mm\times(\mm\times\mmt)$ is a representative of $\mmt$ that is weakly continuous in time with values in $\bL^2(\Omega)$.
From now on, $\mmt$ denotes this representative.

To finally extend the energy inequality from almost all $t'\in[0,T]$ to all $t\in[0,T]$, we collect the continuity properties of the terms involved.
First, $\Grad\mm$ is weakly continuous in time with values in $\bL^2(\Omega)$.
Indeed, $\mm$ is Lipschitz continuous in time with values in $\bL^2(\Omega)$ and bounded in $L^\infty(0,T;\bH^1(\Omega))$, so that Lemma~\ref{lem:temam} with $X=\bH^1(\Omega)$ and $Y=\bL^2(\Omega)$ shows that $\mm$ is weakly continuous in time with values in $\bH^1(\Omega)$.
Second, $\mmt$ is weakly continuous in time with values in $\bL^2(\Omega)$ by construction.
Now, let $t\in[0,T]$.
Since the energy inequality holds for almost all $t'\in[0,T]$, we can choose a sequence of such times $t'_\ell$ with $t'_\ell\to t$ as $\ell\to\infty$.
By the first two properties, $\Grad\mm(t'_\ell)\rightharpoonup\Grad\mm(t)$ and $\mmt(t'_\ell)\rightharpoonup\mmt(t)$ weakly in $\bL^2(\Omega)$.
Since norms are weakly lower semicontinuous, this yields
\begin{equation*}
    \mathcal{J}[\mm(t),\mmt(t)]\le\liminf_{\ell\to\infty}\mathcal{J}[\mm(t'_\ell),\mmt(t'_\ell)].
\end{equation*}
Moreover, the dissipation integral is Lipschitz continuous in time, since
\begin{equation*}
    \Big|\int_s^t\norm{\mmt(r)}^2\,\mathrm{d}r\Big|
    \le|t-s|\,\norm[L^\infty(0,T;\bL^2(\Omega))]{\mmt}^2
    \qquad\text{for all } s,t\in[0,T],
\end{equation*}
so that $\int_0^{t'_\ell}\norm{\mmt(s)}^2\,\mathrm{d}s\to\int_0^{t}\norm{\mmt(s)}^2\,\mathrm{d}s$ as $\ell\to\infty$.
Altogether, we obtain
\begin{equation}\label{eq:energy_inequality_2}
\begin{aligned}
    &\mathcal{J}[\mm(t),\mmt(t)]
    +\alpha\int_0^t\norm{\mmt(s)}^2\,\mathrm{d}s\\
    &\quad\le\liminf_{\ell\to\infty}
    \Big[
        \mathcal{J}[\mm(t'_\ell),\mmt(t'_\ell)]
        +\alpha\int_0^{t'_\ell}\norm{\mmt(s)}^2\,\mathrm{d}s
    \Big]
    \le\mathcal{J}[\mm^0,\vv^0],
\end{aligned}
\end{equation}
where the last inequality is the energy inequality at the times $t'_\ell$.
Since $t\in[0,T]$ and $T>0$ are arbitrary, this proves Definition~\ref{def: weak solution}{\rm(\ref{def: weak solution iv})}.

\emph{Step~4: The limit $\mm$ satisfies Definition~\ref{def: weak solution}{\rm(\ref{def: weak solution ii})}.}
Finally, we aim to verify that the initial conditions
$(\mm^0,\vv^0)$ are satisfied continuously in
$\bH^1(\Omega)\times\bL^2(\Omega)$ as $t\to0$.

By $\mm_{hk}\rightharpoonup\mm$ in $\bH^1(\OT)$ in Step~1, and the continuity of the time-trace operator
from $\bH^1(\OT)$ to $\bL^2(\Omega)$,
\[
    \mm_h^0=\mm_{hk}(0)\rightharpoonup\mm(0)
    \qquad\text{in }\bL^2(\Omega).
\]
On the other hand, assumption~\eqref{enum: A1} gives
$\mm_h^0\to\mm^0$ strongly in $\bH^1(\Omega)$,
and hence also in $\bL^2(\Omega)$.
By uniqueness of the weak limit, we conclude that
$\mm(0)=\mm^0$.

Together with the Lipschitz continuity of $\mm$ and the weak continuity of $\Grad\mm$ in time with values in $\bL^2(\Omega)$, both established in Substep~3d,
this implies that
\[
    \mm(t)\to\mm^0
    \quad\text{and}\quad
    \Grad\mm(t)\rightharpoonup\Grad\mm^0
    \qquad\text{in }\bL^2(\Omega)
    \quad\text{as }t\to0.
\]
It remains to identify $\mmt(0)$.
Recall from Substep~3d that $\mmt=-\mm\times(\mm\times\mmt)$, where $\mm\times\mmt$ is weakly continuous in time with values in $\bL^2(\Omega)$ and, for all $\ppsi\in\bH^1(\Omega)$, the real-valued function $t\mapsto\inner{(\mm\times\mmt)(t)}{\ppsi}$ is Lipschitz continuous on $[0,T]$ with derivative given by~\eqref{eq:weak_derivative}.
Since $\mm(0)=\mm^0$, it thus suffices to identify $(\mm\times\mmt)(0)$.
To this end, let $\ppsi\in\bH^1(\Omega)$ and $\eta\in C^\infty_c([0,T))$ with $\eta(0)=1$, and choose $\vvarphi=\eta\ppsi$ in~\eqref{eq:weak_solution var_formulation}.
Integrating by parts in time in the term involving $\partial_t\vvarphi=\eta'\ppsi$ in~\eqref{eq:weak_solution var_formulation} and using~\eqref{eq:weak_derivative}, we obtain
\begin{align*}
    -\tau\int_0^T&\eta'(t)\inner{(\mm\times\mmt)(t)}{\ppsi}\dtt
    \\
    &=\tau\inner{(\mm\times\mmt)(0)}{\ppsi}
    +\tau\int_0^T\eta(t)\,\frac{\mathrm{d}}{\mathrm{d}t}\inner{(\mm\times\mmt)(t)}{\ppsi}\dtt
    \\
    &\mkern-5mu\stackrel{\eqref{eq:weak_derivative}}{=}\tau\inner{(\mm\times\mmt)(0)}{\ppsi}
    +\int_0^T\eta(t)\inner{\mmt(t)}{\ppsi}\dtt
    \\
    &\quad+\int_0^T\eta(t)\inner{\Grad\mm(t)}{\mm(t)\times\Grad\ppsi}\dtt
    -\alpha\int_0^T\eta(t)\inner{\mm(t)\times\mmt(t)}{\ppsi}\dtt.
\end{align*}
We insert this into~\eqref{eq:weak_solution var_formulation} and recall that $\inner{\heff[\mm]}{\vvarphi\times\mm}=\eta\inner{\Grad\mm}{\mm\times\Grad\ppsi}$.
Then all integrals cancel, and only the terms at $t=0$ remain, i.e.,
\[
    0=\tau\inner{(\mm\times\mmt)(0)}{\ppsi}-\tau\inner{\mm^0\times\vv^0}{\ppsi}.
\]
Since $\tau>0$, $\ppsi\in\bH^1(\Omega)$ is arbitrary, and $\bH^1(\Omega)$ is dense in $\bL^2(\Omega)$, this yields $(\mm\times\mmt)(0)=\mm^0\times\vv^0$ a.e.\ in $\Omega$.
Since $|\mm^0|=1$ and $\mm^0\cdot\vv^0=0$, the vector triple-product identity
$\aaa\times(\bb\times \boldsymbol{c}) = (\aaa\cdot \boldsymbol{c})\bb - (\aaa\cdot \bb)\boldsymbol{c}$
gives
\[
    \mmt(0)=-\mm^0\times(\mm\times\mmt)(0)=-\mm^0\times\big(\mm^0\times\vv^0\big)=\vv^0\quad \text{ a.e.\ in } \Omega.
\]
By the weak continuity of $\mmt$, we conclude that $\mmt(t)\rightharpoonup\vv^0$ in $\bL^2(\Omega)$ as $t\to0$.

To upgrade this to strong convergence, we expand the squared norms
and use~\eqref{eq:energy_inequality_2} to obtain
\begin{align*}
    0\le{}&
    \norm{\Grad\mm(t)-\Grad\mm^0}^2
    +\tau\norm{\mmt(t)-\vv^0}^2
    \\
    \le{}&
    2\norm{\Grad\mm^0}^2+2\tau\norm{\vv^0}^2
    -2\inner{\Grad\mm(t)}{\Grad\mm^0}
    -2\tau\inner{\mmt(t)}{\vv^0}.
\end{align*}
The right-hand side tends to zero as $t\to0$
by the weak convergences established above.
Since $\tau>0$, this proves
$\Grad\mm(t)\to\Grad\mm^0$ and
$\mmt(t)\to\vv^0$ strongly in $\bL^2(\Omega)$.
Together with $\mm(t)\to\mm^0$ in $\bL^2(\Omega)$,
this shows that the initial conditions are satisfied
continuously in $\bH^1(\Omega)\times\bL^2(\Omega)$.
This concludes the proof of Theorem~\ref{thm: main result}.
\end{proof}

\appendix
\section{}\label{sec:appendix}
\begin{lemma}[{\cite[Chapter~III, Lemma~1.4]{temam1977navier}}]\label{lem:temam}
Let $X$ and $Y$ be Banach spaces with $X\subset Y$ and continuous injection, and let $X$ be reflexive.
Let $T>0$ and let $u\in L^\infty(0,T;X)$ be weakly continuous in time with values in $Y$, i.e., for all $g\in Y^*$, the map $t\mapsto\dual[Y^*\times Y]{g}{u(t)}$ is continuous on $[0,T]$.
Then, $u(t)\in X$ with $\norm[X]{u(t)}\le\norm[L^\infty(0,T;X)]{u}$ for all $t\in[0,T]$, and $u$ is weakly continuous in time with values in $X$, i.e., for all $f\in X^*$, the map $t\mapsto\dual[X^*\times X]{f}{u(t)}$ is continuous on $[0,T]$.
\end{lemma}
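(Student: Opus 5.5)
The statement is Temam's lemma, a classical result; I would prove it by a weak-compactness argument in $X$, using reflexivity.

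\emph{Step 1: pointwise values lie in $X$ with the stated bound.}
Fix $t\in[0,T]$. Since $u\in L^\infty(0,T;X)$, there is a null set $E$ such that $\norm[X]{u(s)}\le M\coloneqq\norm[L^\infty(0,T;X)]{u}$ for all $s\notin E$. Choose $s_n\notin E$ with $s_n\to t$.
\begin{itemize}
\item The sequence $\{u(s_n)\}$ is bounded in the reflexive space $X$, so a subsequence converges weakly in $X$ to some $\xi$ with $\norm[X]{\xi}\le M$, by weak lower semicontinuity of the norm.
\item The injection $X\hookrightarrow Y$ is continuous, hence weak-to-weak continuous, so $u(s_n)\rightharpoonup\xi$ weakly in $Y$ along this subsequence.
\item The hypothesis of weak continuity in $Y$ gives $u(s_n)\rightharpoonup u(t)$ weakly in $Y$.
\end{itemize}
Uniqueness of weak limits in $Y$ then yields $u(t)=\xi\in X$ with $\norm[X]{u(t)}\le M$. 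This holds for every $t$, not only almost every $t$.

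\emph{Step 2: weak continuity in $X$.}
Let $t_n\to t$ and $f\in X^*$. By Step 1, $\{u(t_n)\}$ is bounded by $M$ in $X$. I use the standard subsequence principle: it suffices to show that every subsequence has a further subsequence along which $\dual[X^*\times X]{f}{u(t_n)}\to\dual[X^*\times X]{f}{u(t)}$.
\begin{itemize}
\item By reflexivity, any subsequence has a further subsequence with $u(t_n)\rightharpoonup\eta$ weakly in $X$.
\item As in Step 1, this subsequence also converges weakly in $Y$ to $\eta$.
\item By the hypothesis, it converges weakly in $Y$ to $u(t)$, so $\eta=u(t)$.
\end{itemize}
The limit is therefore independent of the subsequence, so the whole sequence satisfies $u(t_n)\rightharpoonup u(t)$ in $X$. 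This is exactly the claimed continuity of $t\mapsto\dual[X^*\times X]{f}{u(t)}$.

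\emph{Main obstacle.}
The only delicate point is the identification of limits. It needs weak limits in $Y$ to be unique, which holds because $Y$ is a Banach space (Hahn--Banach separates points). It also needs $X\hookrightarrow Y$ to carry weak convergence to weak convergence: for $g\in Y^*$, the restriction $g|_X$ lies in $X^*$. Once these two facts are recorded, the rest is routine.

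The argument does not need separability. Sequential weak compactness of bounded sets in a reflexive space follows from Eberlein--\v{S}mulian, or from reducing to the closed span of the sequence, which is separable and reflexive.
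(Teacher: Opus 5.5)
Your proof is correct and complete. The paper does not prove this lemma itself; it quotes it from Temam and states the reflexivity of $X$ explicitly. Your argument is the standard proof behind that citation: bounded sequences in the reflexive space $X$ have weakly convergent subsequences, and the limit is identified through weak convergence in $Y$. You apply this first along a.e.-good times $s_n\to t$ to get $u(t)\in X$ with the norm bound, and then via the subsequence principle to get weak continuity, using reflexivity exactly where it is needed.
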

\section*{Acknowledgements}
The first author acknowledges support from the \emph{Vienna School of Mathematics}.
The first and third authors are supported
by the Austrian Science Fund (FWF) through projects
10.55776/F65,
10.55776/PAT3699424,
and 10.55776/PAT3446525.
The fourth author is a member of the `Gruppo Nazionale per il Calcolo Scientifico (GNCS)'
of the Italian `Istituto Nazionale di Alta Matematica (INdAM)',
and
was partially supported by GNCS (CUP E53C25002010001)
and by the European Cooperation in Science and Technology 
(COST Action POLYTOPO CA23134).
The authors thank Shuo Yang (Beijing Institute of Mathematical Sciences and Applications)
for useful discussions.

\printbibliography


\end{document}